\newtheorem{Theorem}{Theorem}
\newtheorem{prop}{Proposition}
\newtheorem{Lemma}{Lemma}
\newtheorem{Corollary}{Corollary}
\DeclareMathOperator{\arcosh}{arcosh}
\def\beq#1#2\eeq{%
        \begin{equation}%
        \label{#1}%
            #2%
        \end{equation}%
    }
\title[Lyapunov and Markov]{Lyapunov spectrum of Markov and Euclid trees}
\author{ K. Spalding}\address{Department of Mathematical Sciences,
Loughborough University, Loughborough LE11 3TU, UK}
\email{K.Spalding@lboro.ac.uk}
\author{A.P. Veselov}
\address{Department of Mathematical Sciences,
Loughborough University, Loughborough LE11 3TU, UK  and Moscow State University, Moscow 119899, Russia}
\email{A.P.Veselov@lboro.ac.uk}
\begin{document}

\maketitle

\begin{abstract}
We study the Lyapunov exponents $\Lambda(x)$ for Markov dynamics as a function of path determined by $x\in \mathbb RP^1$ on a binary planar tree, describing the Markov triples and their ``tropical" version - Euclid triples. We show that the corresponding Lyapunov spectrum is $[0, \ln \varphi]$, where $\varphi$ is the golden ratio, and prove that on the Markov-Hurwitz set $\mathbb{X}$ of the most irrational numbers the corresponding function $\Lambda_\mathbb{X}$ is monotonically increasing and in the Farey parametrization is convex.
\end{abstract}

%\tableofcontents

\section{Introduction}

In 1880  Andrei A. Markov, a 24-year old student from St Petersburg, discovered in his master's thesis \cite{Markov} a remarkable connection between Diophantine analysis and the following Diophantine equation
\beq{equa}
x^2+y^2+z^2=3xyz,
\eeq
known nowadays as the {\it Markov equation.} The solutions of this celebrated equation are known as {\it Markov triples} and can be found from the obvious one $(1,1,1)$ by compositions of Vieta involutions
\beq{invo}
(x,y,z) \rightarrow (x,y, 3xy-z)
\eeq
and permutations of $x,y,z.$ 

%The result can be represented as the following {\it Markov tree} (see Fig. 1).
%\footnote{Our Markov tree is a special planar embedding (explained in Section 2) of a commonly shown version (see e.g. \cite{Bombieri,Cuhive}).This difference is crucial for the rest of the paper.} 
%
%\begin{figure}[h]
%\begin{center}
% \includegraphics[height=38mm]{markovtree3}  \hspace{8pt}  \includegraphics[height=38mm]{euclidtree2}
%\caption{\small Markov and Euclid trees}
%\end{center}
%\end{figure}
%
%%\begin{figure}[h]
%%\begin{center}
%%\hspace{10mm} \includegraphics[width=9cm]{markovtree3}  \caption{\small Markov tree}
%%\end{center}
%%\end{figure}
%

The numbers, which appear in Markov triples, are called {\it Markov numbers}, the set of which we denote by $\mathcal{M}$. Their arithmetic was studied by Frobenius \cite{Frobenius}, see recent development in \cite{BGS}. For more history and details we refer to the very nicely written book \cite{Aigner} by Aigner.

The growth of Markov numbers 
$$
m=1, 2, 5, 13, 29, 34, 89, 169, 194, 233, 433, 610, 985, 1325,\dots 
$$
on the real line was studied by Zagier \cite{Zagier} (see also McShane and Rivin \cite{McShane}).

However, Markov triples naturally grow on a binary tree (see e.g. \cite{Bombieri}). 
In our paper we study the growth of the numbers along each path {\it as the function of the path} on the Markov tree.

More precisely, we will be using the tree representation with Markov numbers living in the connected components of the {\it complement} to a planar binary tree, using the graphical representation of Vieta involution shown in Fig. 1.

\begin{figure}[h]
\begin{center}
\includegraphics[trim = 0mm 50mm 140mm 50mm, clip, height=25mm]{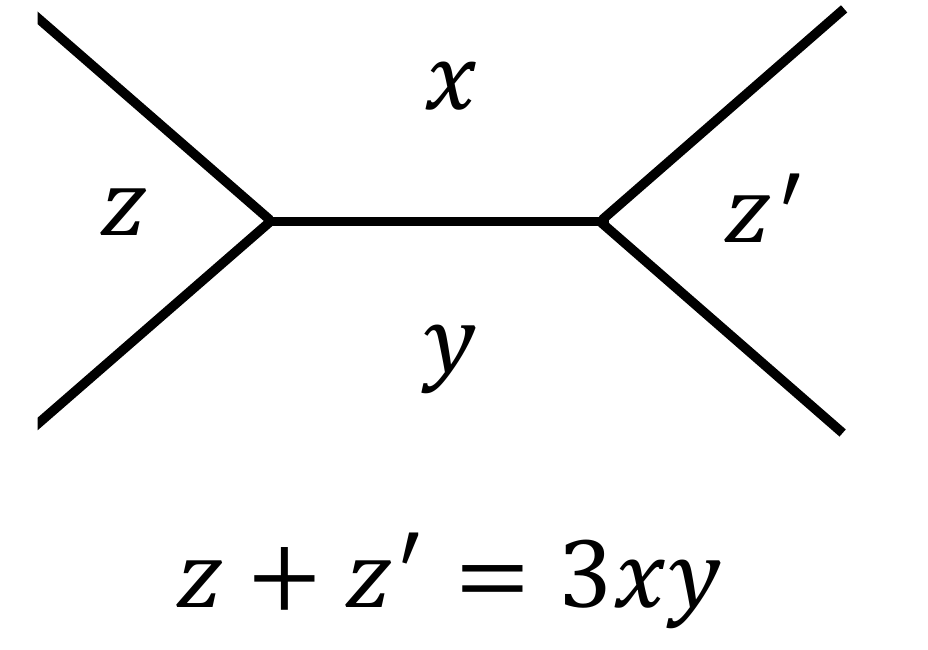}  
\caption{\small Graphical representation of Vieta involution}
\end{center}
\end{figure}

The corresponding {\it Markov tree} is shown in Fig. 2 next to the {\it Farey tree}, for which at each vertex  we have fractions $\frac{a}{b}$, $\frac{c}{d}$ and their {\it Farey mediant} $$\frac{a}{b}*\frac{c}{d}=\frac{a+c}{b+d}.$$

%Similarly one can construct the {\it Markov tree} with Markov triples at neighbouring vertices related by Vieta involution (\ref{invo}) as shown in Fig. 1.

%It is related to the Frobenius parametrization $m=m(\frac{p}{q})$ of Markov numbers by the rational numbers shown in Fig. 1.

\begin{figure}[h]
\begin{center}
 \includegraphics[height=48mm]{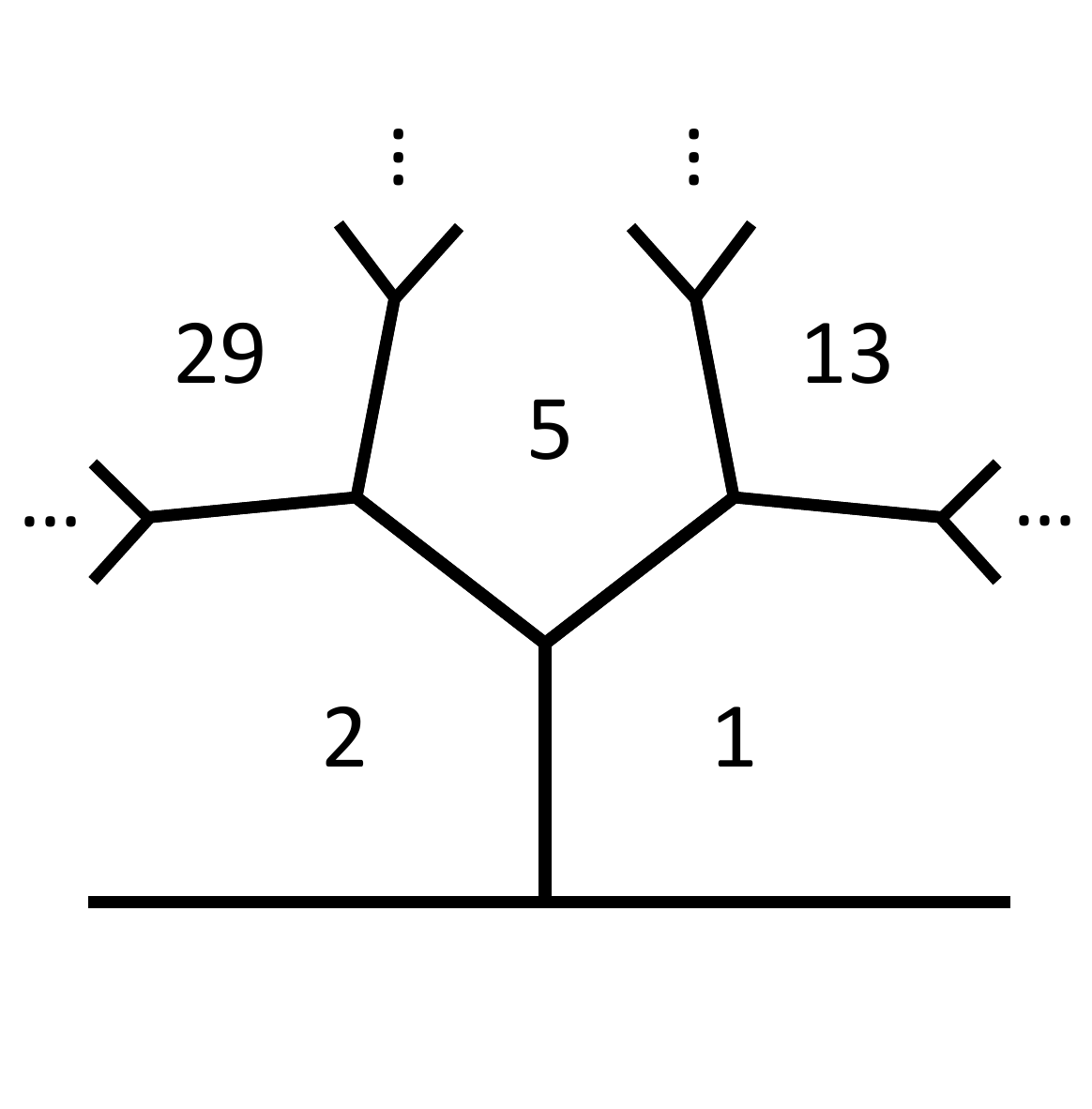}  \hspace{8pt}  \includegraphics[height=48mm]{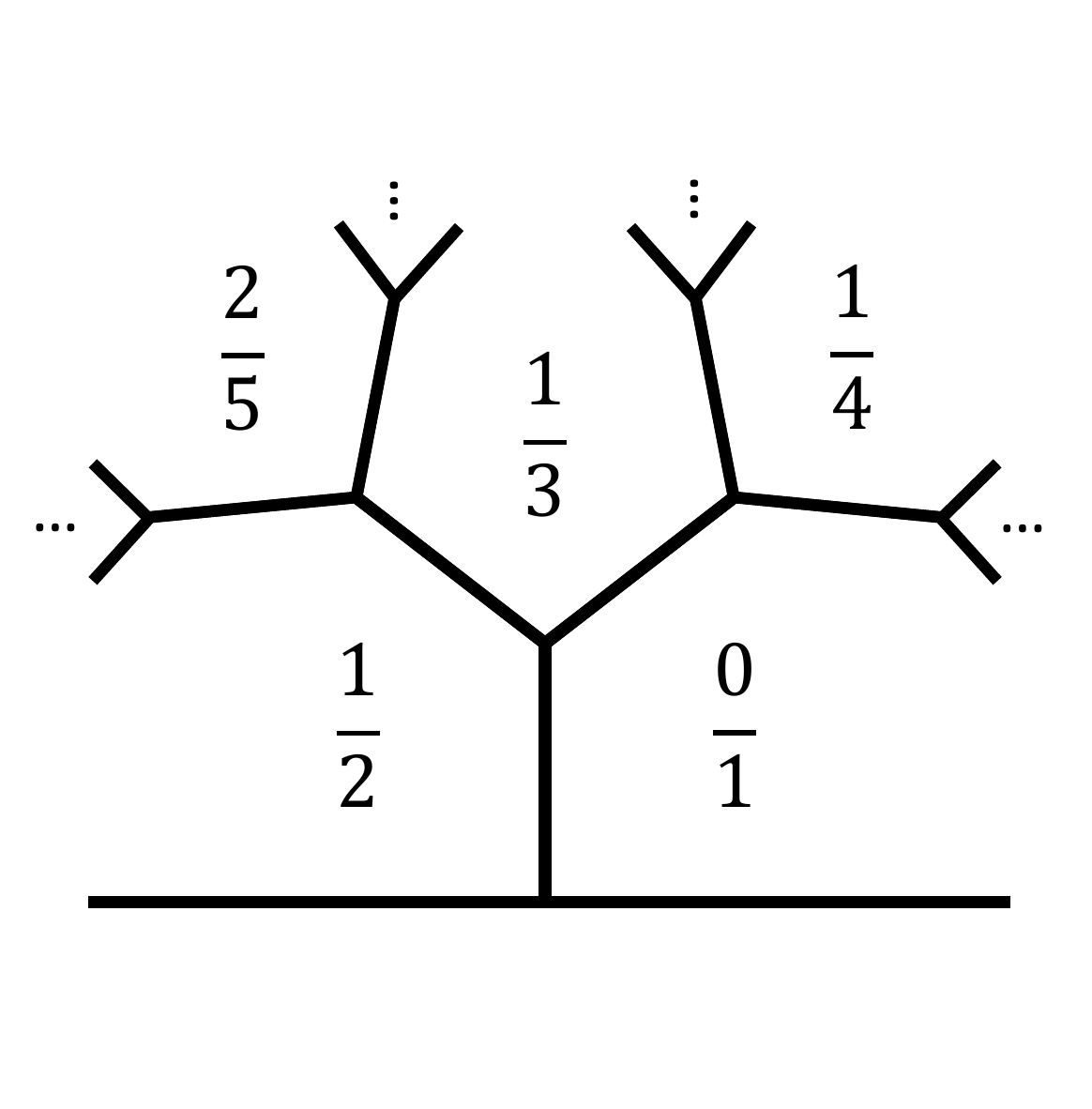}
\caption{\small Correspondence between Markov numbers and Farey fractions}
\end{center}
\end{figure}

This defines the {\it Farey parametrisation} of the Markov numbers $m=m(\frac{p}{q})$ by the fractions $\frac{p}{q} \in [0,\frac{1}{2}]$, which goes back to Frobenius \cite{Frobenius} and will be crucial for us.

Using the Farey tree we can assign to every infinite path $\gamma$ on a rooted planar binary tree a point $x \in [0,\frac{1}{2}]$ by considering the limit of the Farey fractions along the path (see Fig. 3).

\begin{figure}[h]
\begin{center}
\includegraphics[width=32mm]{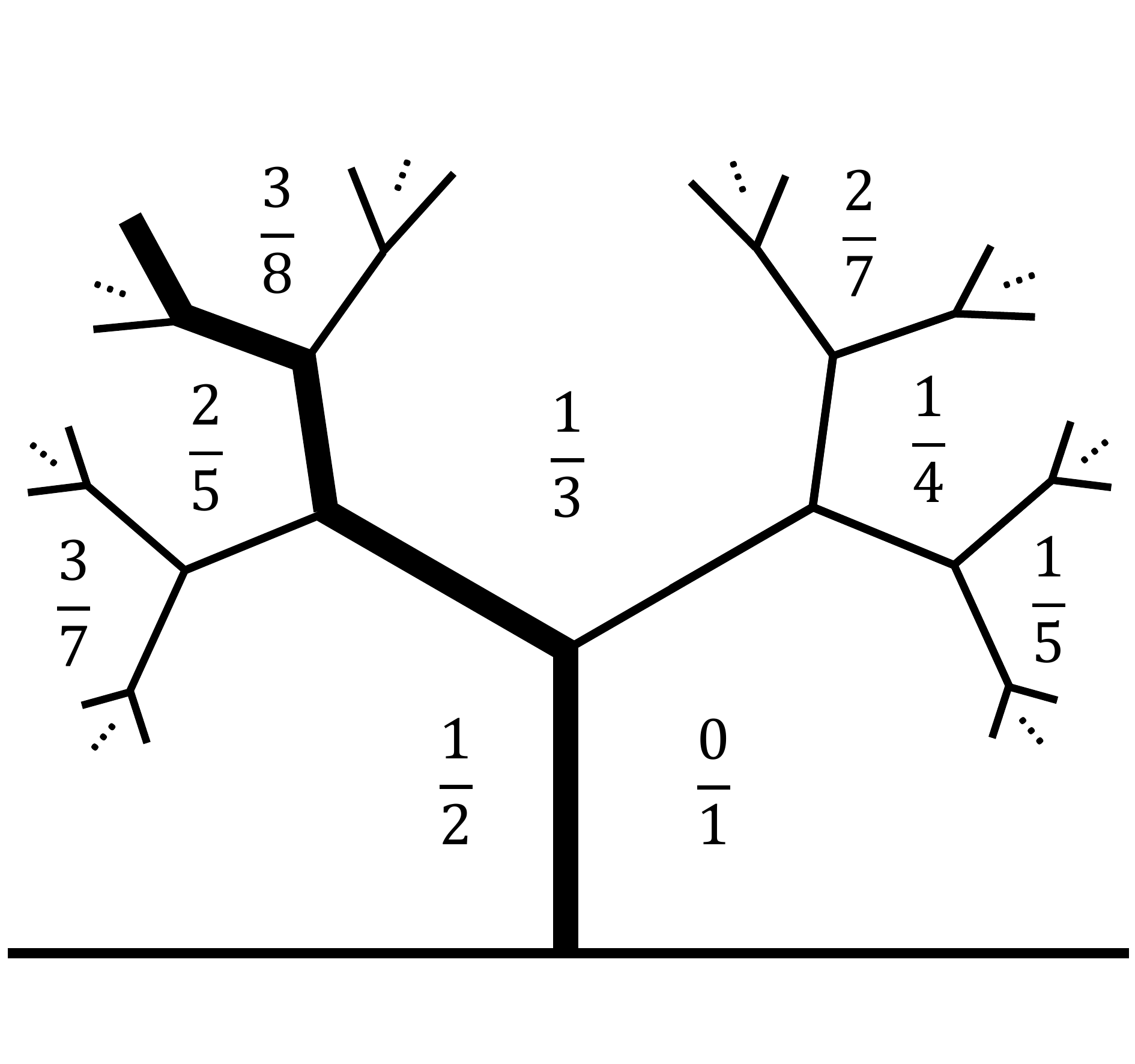}  \hspace{8pt}  \includegraphics[width=32mm]{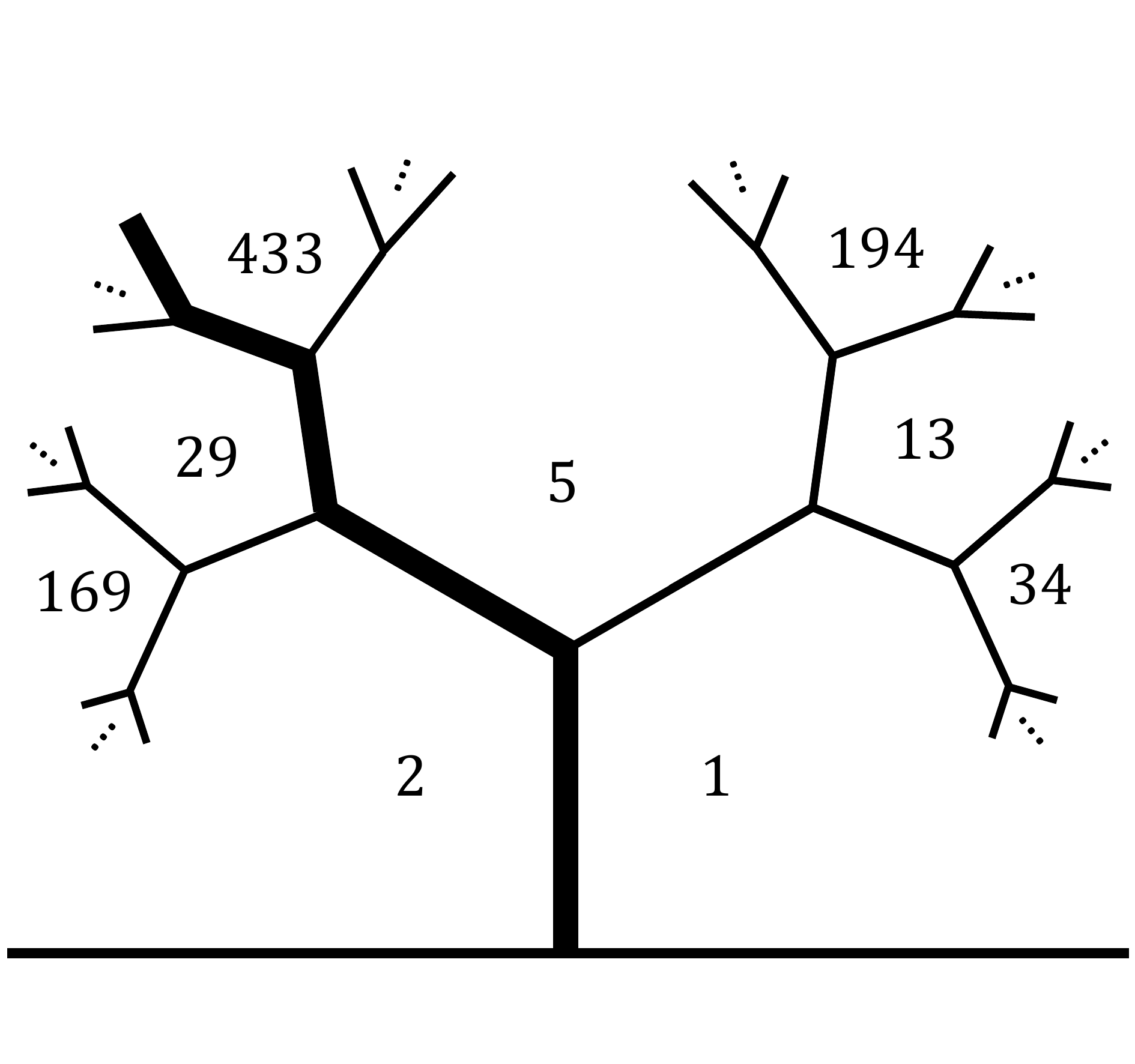}  \hspace{8pt}\includegraphics[height=32mm]{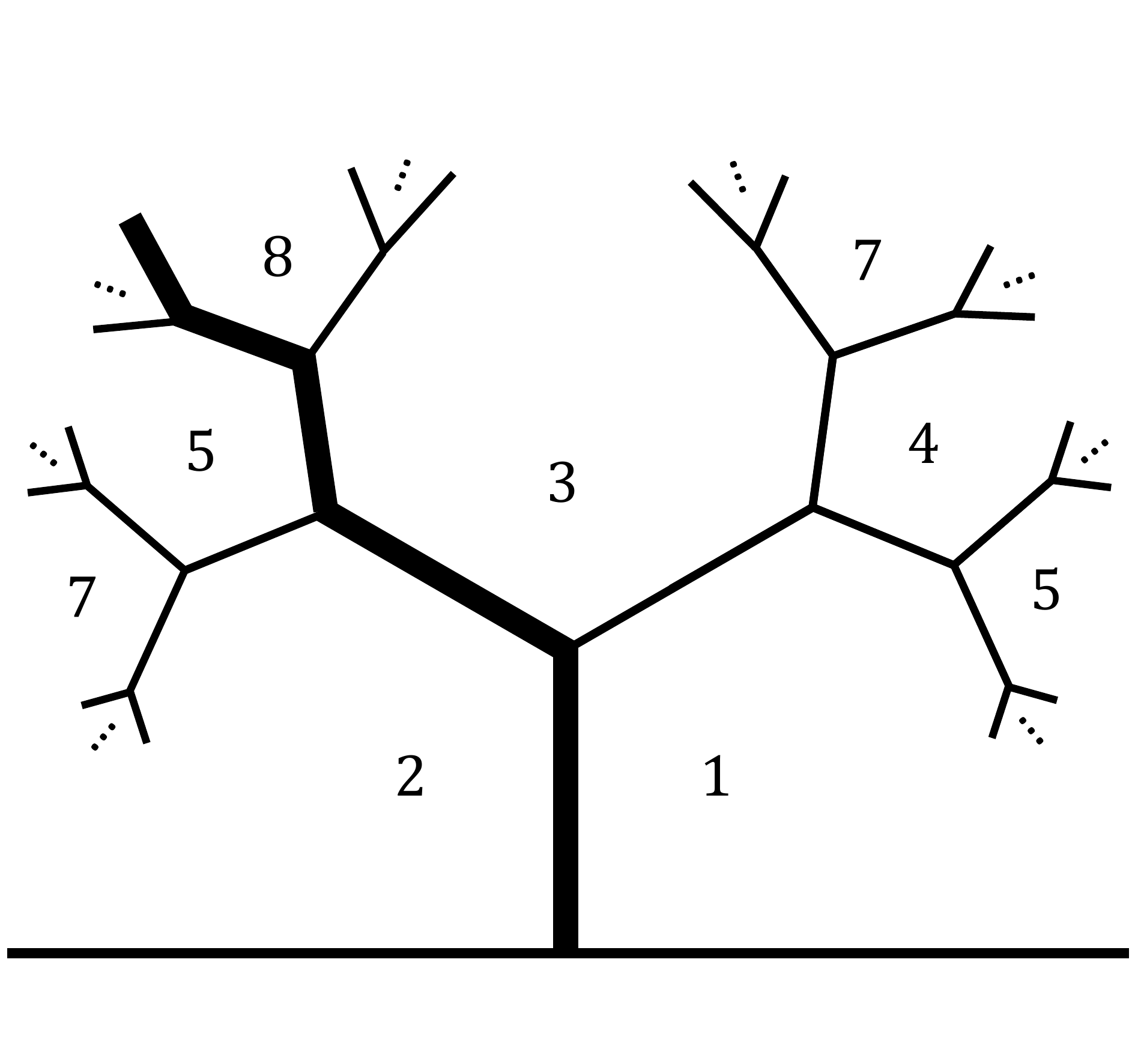}
\caption{\small Farey, Markov and Euclid trees with the ``golden" path}
\end{center}
\end{figure}

Let $m_n(x)$ be the $n$-th Markov number along the path $\gamma(x)$ and define the corresponding Lyapunov exponent  
$\Lambda(x)$ as 
\beq{defL}
\Lambda(x)=\limsup_{n\to\infty}\frac{\ln(\ln m_n(x))}{n}.
\eeq

Equivalently, following \cite{Cohn_1979, Zagier} one can consider the ``tropical version" of the Markov tree: the Euclid tree describing the Euclidean algorithm with integer triples $(u,v,w)$ satisfying the relation
\beq{euclid}
u+v=w
\eeq
and define the {\it Lyapunov exponent} as
\beq{defeucl}
\Lambda(x)=\limsup_{n\to\infty}\frac{\ln w_n(x)}{n},
\eeq
where $w_n(x)$ is the last (largest) number in the $n$-th triple along path $\gamma(x).$

%\begin{figure}[h]
%\begin{center}
% \includegraphics[height=48mm]{Fib_Path_Markov}  \hspace{8pt}  \includegraphics[height=48mm]{Fib_Path_Euclid}
%\caption{\small Markov and Euclid trees with a path}
%\end{center}
%\end{figure}

%\begin{figure}[h]
%\begin{center}
%\hspace{10mm} \includegraphics[width=9cm]{euclidtree2}  \caption{\small Euclid tree}
%\end{center}
%\end{figure}

To see the equivalence of these definitions one can consider (following Mordell \cite{Mordell}) a modification of the Markov equation given by
\beq{mod}
x^2+y^2+z^2=3xyz+\frac{4}{9},
\eeq
 related to  (\ref{euclid}) by the simple change
\beq{modch}
 x=\frac{2}{3}\cosh u, \,  y=\frac{2}{3}\cosh v, \, z=\frac{2}{3}\cosh w,
\eeq
which explains the double logarithm in the definition (\ref{defL}). Alternatively, one can use the simple arguments from Zagier \cite{Zagier}.

We prove that the Lyapunov exponent exists for all paths and can be naturally extended to the  function $\Lambda(x), \, x \in \mathbb{R}P^1$, which is $GL_2(\mathbb Z)$-{\it invariant}:
\beq{gl2}
\Lambda\left(\frac{ax+b}{cx+d}\right)=\Lambda(x), \quad x \in \mathbb{R}P^1,\, \begin{pmatrix}
  a & b \\
  c & d \\
\end{pmatrix} \in GL_2(\mathbb Z).
\eeq
 and almost everywhere vanishing (see next section). This interesting function is the main object of our study.
%Strictly speaking, we defined $\Lambda(x)$ only for positive real $x$, but we extend $\Lambda$ to negative $x$ simply by 
%setting $\Lambda(-x)=\Lambda(x).$
%, which corresponds to a natural extension of this picture to the full binary tree (see Fig.3 in the next section).

%The corresponding Lyapunov function $\Lambda(x), \, x \in \mathbb{R}P^1$ is $GL_2(\mathbb Z)$-{\it invariant}:
%\beq{gl2}
%\Lambda\left(\frac{ax+b}{cx+d}\right)=\Lambda(x), \quad x \in \mathbb{R}P^1
%\eeq
%for all integer $a,b,c,d$, satisfying $ad-bc=\pm 1.$

%We show that $\Lambda(x)=0$ almost everywhere. In particular, $\Lambda(x)$ vanishes at every rational $x \in \mathbb Q$. 
%It is clear that such function can not be continuous unless it is identically zero, which is not the case.

The set $Spec_\Lambda=\{\Lambda(x), \,\, x \in  \mathbb{R}P^1\}$ of all possible values of $\Lambda(x)$ is called the {\it Lyapunov spectrum} of Markov and Euclid trees.

%Let $\varphi=\frac{1+\sqrt 5}{2}$ be the {\it golden ratio}.

\begin{Theorem}
The Lyapunov spectrum of Markov and Euclid trees is 
\beq{specl}
Spec_\Lambda=[0, \ln \varphi],
%Spec_\Lambda=\left[0, \ln \left(\frac{1+\sqrt 5}{2}\right)\right].
\eeq
where $\varphi=\frac{1+\sqrt 5}{2}$ is the golden ratio.
\end{Theorem}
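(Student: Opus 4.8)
The plan is to analyze the growth of $w_n(x)$ in the Euclid tree directly, since by the equivalence established via Mordell's modification \eqref{mod}–\eqref{modch} this computes the same $\Lambda(x)$. The key observation is that along a path $\gamma(x)$, each step replaces a triple $(u,v,w)$ with $u+v=w$ by either $(u,w,u+w)$ or $(v,w,v+w)$, depending on which branch of the binary tree the path follows. Writing the path as an infinite word in $\{L,R\}$ and grouping consecutive identical letters, one sees that a block of $k$ consecutive identical moves multiplies the larger entry roughly by the $k$-th term of a Fibonacci-like recursion, i.e.\ by a factor that behaves like $\varphi^{k}$ only in the \emph{most balanced} case $k=1$ at \emph{every} step, and grows \emph{linearly} (not exponentially) in the number of steps when $k$ is large. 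Concretely, if the continued fraction expansion of $x$ is $[a_1,a_2,a_3,\dots]$ (with respect to the Farey coding of the tree), then after following the path through the first $N$ partial quotients the value $\ln w_n$ is comparable to a product of matrices $\begin{pmatrix}1&1\\0&1\end{pmatrix}^{a_i}$ or their transposes, and $\frac{1}{n}\ln w_n$ converges to an average determined by the $a_i$.

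First I would make this precise: show that $\ln w_n(x)$ equals, up to bounded error, the logarithm of the norm of a product of $n$ elementary matrices determined by the path, and that consequently $\Lambda(x) = \lim \frac{1}{n}\ln\|M_n\cdots M_1\|$ exists for every $x$ (not just $\limsup$), using subadditivity together with the $GL_2(\mathbb Z)$-invariance \eqref{gl2} to reduce to a fundamental domain. Second, I would establish the \textbf{upper bound} $\Lambda(x)\le\ln\varphi$: since each single step $(u,v,w)\mapsto(u,w,u+w)$ satisfies $w_{new}=u+w\le 2w\le \varphi^2 w/\text{(something)}$... more carefully, the pair $(v_n,w_n)$ of the two largest entries satisfies $w_{n+1}\le w_n+w_{n-1}$-type domination by the Fibonacci recursion, and the Fibonacci numbers themselves grow like $\varphi^n$; the extremal path is exactly the alternating one $LRLRLR\cdots$, whose limit point is the golden ratio (or its image in $[0,\frac12]$), giving $\Lambda = \ln\varphi$ and realizing the right endpoint. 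Third, for the \textbf{lower bound and connectedness}, I would show $0$ is attained (e.g.\ by a path with continued fraction quotients $a_i\to\infty$ fast enough, so that growth is sub-exponential, as happens for a.e.\ $x$), and then argue that $\Lambda$ takes all intermediate values: by $GL_2(\mathbb Z)$-invariance and the continued-fraction formula, $\Lambda(x)$ depends continuously (in an appropriate sense) on the tail behavior of $[a_1,a_2,\dots]$, and one can interpolate — e.g.\ paths that alternate long blocks of $1$'s (contributing golden-ratio growth) with increasingly long blocks of large quotients (contributing zero) produce, by choosing the proportions, any value in $[0,\ln\varphi]$.

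The main obstacle I anticipate is the \textbf{lower bound / surjectivity onto the whole interval}: showing every value in $(0,\ln\varphi)$ is actually achieved requires a genuine construction of paths with prescribed Lyapunov exponent and a continuity/intermediate-value argument for $\Lambda$ as a function on the space of paths (sequences of partial quotients), where the topology is subtle because $\Lambda$ is highly non-continuous in the usual topology on $\mathbb{R}P^1$ (it vanishes a.e.\ but is positive on a dense set). The cleanest route is probably to exhibit an explicit one-parameter family — for instance periodic continued fractions $[\,\overline{1^p\,k^q}\,]$ — compute $\Lambda$ for these via the eigenvalues of the associated product of matrices $\bigl(\begin{smallmatrix}1&1\\0&1\end{smallmatrix}\bigr)^{?}$, obtain an explicit function of $(p,q,k)$ that is continuous in suitable limits, and check that its range is dense in $[0,\ln\varphi]$; combined with the closedness of $Spec_\Lambda$ (which follows from compactness once one knows $\Lambda$ is the limit of the continuous functions $x\mapsto \frac1n\ln w_n(x)$ on a compact set, or from a direct diagonal argument) this yields the full interval.
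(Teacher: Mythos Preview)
Your overall architecture---matrix products of $L,R$, Fibonacci domination for the upper bound, the golden path for the maximum, and a construction for intermediate values---matches the paper's. Two points deserve correction or comparison.

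\medskip
\textbf{A genuine error.} Your claim that $\Lambda(x)=\lim_{n\to\infty}\frac{1}{n}\ln\|M_n\cdots M_1\|$ exists for \emph{every} $x$ ``using subadditivity'' is not justified. Fekete's lemma needs $a_{m+n}\le a_m+a_n$ for the \emph{same} sequence, but here $\ln\|X_1\cdots X_{m+n}\|\le \ln\|X_1\cdots X_m\|+\ln\|X_{m+1}\cdots X_{m+n}\|$ compares two different tails; since the word $X_1X_2\cdots$ is not stationary, nothing forces convergence. Indeed, a path that alternates ever-longer blocks of $(RL)$'s with ever-longer blocks of $R$'s will make $\frac{1}{n}\ln\rho(A_n)$ oscillate. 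The paper defines $\Lambda$ as a $\limsup$ throughout and never claims the limit exists for all $x$ (only almost everywhere, where it is zero). This does not damage the theorem, but you should drop the claim.

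\medskip
\textbf{Upper bound.} Your Fibonacci-domination idea is correct but you left it unfinished. The paper's argument is cleaner and worth knowing: since $\|L\|=\|R\|=\varphi$ (because $RL$ has largest eigenvalue $\varphi^2$), one gets immediately $\rho(A_n)\le\|A_n\|\le\|X_1\|\cdots\|X_n\|=\varphi^n$, hence $\Lambda(x)\le\ln\varphi$.

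\medskip
\textbf{Surjectivity: a different route.} Here you and the paper genuinely diverge. The paper avoids any closedness or density argument and instead builds, for a given target $\Lambda_0\in(0,\ln\varphi)$, a single explicit path by a greedy threshold-crossing rule: multiply on the right by $RL$ until the spectral radius exceeds $e^{n\Lambda_0}$, then by $R$ until it drops below, and repeat. A preliminary lemma (that premultiplication by any fixed $B\in SL_2(\mathbb N)$ does not change the Lyapunov exponent, and that $\frac{1}{n}\ln\rho(BR^n)\to 0$, $\frac{1}{2n}\ln\rho(B(RL)^n)\to\ln\varphi$) guarantees each phase terminates and that $\frac{1}{n}\ln\rho(A_n)\to\Lambda_0$. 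Your proposed route---compute $\Lambda$ on a periodic family, show the values are dense, then invoke closedness of $Spec_\Lambda$---can be made to work, but your justification for closedness is not correct: that $\Lambda$ is a pointwise limit of continuous functions says nothing about its range being closed. A diagonal/splicing argument on paths does prove closedness, but it is more work than the paper's direct construction and you would still need to verify density carefully. The paper's greedy construction is both shorter and more transparent.
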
 

In particular, for all $x \in  \mathbb{R}P^1$
$$
\Lambda(x) \leq \Lambda(\varphi)= \ln \varphi,
$$
so the ``golden path" has the maximal Lyapunov exponent.

To state our second main result we need to introduce the following set of the ``most irrational numbers" $\mathbb{X} \subset \mathbb R.$ 
%These numbers are certain quadratic irrationals, which are special representatives of the equivalence classes with Markov constants $\mu >1/3$ (see section 3 for the precise definition). 

Recall that Hurwitz \cite{Hurwitz} proved that the golden ratio and its equivalents have the maximal possible Markov constant, which can be considered a measure of irrationality (see \cite{Burger} and section 4 below).

The celebrated Markov theorem claims that Markov constants $\mu$ larger than $1/3$ have the form
\beq{mspec}
\mu=\frac{m}{\sqrt{9m^2-4}},
\eeq 
where $m$ is a Markov number (see details in Delone \cite{Delone} and Bombieri \cite{Bombieri}).

Corresponding equivalence classes of these most irrational numbers are naturally labeled by the Markov numbers $m \in \mathcal{M}$. They have special representatives $x_m$ (which we call Markov-Hurwitz numbers) with pure periodic continued fractions with period consisting of $1$ and $2$:
$$
x_1=[\overline{1}]=\frac{\sqrt 5-1}{2}, \, x_2=[\overline{2}]=\sqrt 2 -1, \, x_5=[\overline{2,2,1,1}]=\frac{\sqrt {221}-9}{14}, \, \dots, 
$$
where 
$$[a_1, a_2, \dots]:=\frac{1}{a_1 +\frac{1}{a_2+ \dots}}$$
(see Section 4). Note that we use a version of continued fractions with $a_0=0$, which will allow us to avoid zeros in continued fractions (cf. \cite{Khin}).
%One can label these numbers also by the rational numbers $\frac{p}{q}\in [0,\frac{1}{2}]$ using the Farey tree as $x(\frac{p}{q})$ (see Fig. 2 and next section).

%\begin{figure}[h]
%\begin{center}
% \includegraphics[height=48mm]{MarkovTree2}  \hspace{8pt}  \includegraphics[height=48mm]{FareyTree21}
%\caption{\small Correspondence between Markov numbers and Farey fractions}
%\end{center}
%\end{figure}

The set $\mathbb{X}$ of all Markov-Hurwitz numbers is countable and has only one isolated point: $x_1=\frac{\sqrt 5-1}{2}\approx 0.6180$, which is also the maximal number in $\mathbb{X}$. The minimal number is $x_2=\sqrt 2 -1\approx 0.4142,$ and the maximal limiting point of $\mathbb{X}$ is
$$x_*=[2,2,\overline{1}]=\frac{7+\sqrt{5}}{22}\approx 0.4198.$$
Using the Farey parametrization of Markov numbers $m=m(\frac{p}{q})$ we can denote the corresponding number $x_m$ as $x(\frac{p}{q}).$

\begin{Theorem}
The restriction $\Lambda_\mathbb{X}$ of the Lyapunov function on the set of Markov-Hurwitz numbers is monotonically increasing from $$\Lambda(x_2)=\frac{1}{2}\ln(1+\sqrt 2) \quad {\text to} \quad \Lambda(x_1)=\ln \left(\frac{1+\sqrt 5}{2}\right).$$ 
In the Farey parametrization, $\Lambda(x(\frac{p}{q}))$ is convex as a function of $\frac{p}{q}.$
\end{Theorem}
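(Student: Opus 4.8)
\medskip
\noindent\textbf{Plan of proof.}
Everything will follow from one explicit formula. Let $M_m$ be the monodromy matrix of the Euclid cocycle over a single period of the periodic path $\gamma(x_m)$, that is, the product of the elementary unipotent $2\times 2$ matrices read off the purely periodic continued fraction of $x_m$. By Cohn's correspondence (\cite{Cohn_1979}) one has $\operatorname{tr}M_m=3m$, so the leading eigenvalue of $M_m$ is $\lambda_m=\tfrac12\bigl(3m+\sqrt{9m^2-4}\bigr)$ and $\ln\lambda_m=\arcosh(3m/2)$. The Lyapunov exponent of a periodic path equals $\ln\lambda$ divided by the number of tree edges in one period; since for $m=m(\tfrac pq)$, with $\tfrac pq\in[0,\tfrac12]$ in lowest terms, the partial quotients of one period sum to $2q$ (classical, via the same mediant recursion), this yields
\[
\Lambda\bigl(x(\tfrac pq)\bigr)=\frac{\arcosh\!\bigl(\tfrac32\,m(\tfrac pq)\bigr)}{2q}.
\]
At the ends of the Farey tree this already gives $\Lambda(x_2)=\tfrac14\arcosh 3=\tfrac12\ln(1+\sqrt2)$ and $\Lambda(x_1)=\tfrac12\arcosh\tfrac32=\ln\varphi$, the asserted endpoint values.

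Convexity in $\tfrac pq$ then becomes purely algebraic. Let $\tfrac ab<\tfrac cd$ be Stern--Brocot neighbours with mediant $\tfrac pq=\tfrac{a+c}{b+d}$ and put $m_a=m(\tfrac ab)$, $m_c=m(\tfrac cd)$; by the Frobenius correspondence the two ways to complete $\{m_a,m_c\}$ to a Markov triple are the roots $m_p>m_p'$ of $z^2-3m_am_cz+(m_a^2+m_c^2)=0$, and $m(\tfrac pq)=m_p$ (the mediant sits deeper in the tree). Multiplying the convexity inequality $\Lambda(x(\tfrac pq))\le\tfrac{b}{b+d}\Lambda(x(\tfrac ab))+\tfrac{d}{b+d}\Lambda(x(\tfrac cd))$ by $2(b+d)$ and using the displayed formula, it becomes $\arcosh(\tfrac32 m_p)\le\arcosh(\tfrac32 m_a)+\arcosh(\tfrac32 m_c)$, equivalently (apply $\cosh$) $6m_p\le 9m_am_c+\sqrt{(9m_a^2-4)(9m_c^2-4)}$. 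Since $9m_am_c=3(m_p+m_p')$ the left side equals $3(m_p-m_p')\ge 0$, so squaring and using $m_pm_p'=m_a^2+m_c^2$ collapses the inequality to
\[
81\,m_a^2m_c^2-36(m_a^2+m_c^2)\ \le\ 81\,m_a^2m_c^2-36(m_a^2+m_c^2)+16 ,
\]
i.e.\ to $0<16$ --- precisely Mordell's deformation constant $\tfrac49$ in disguise. Hence $\Lambda\circ x$ is strictly convex on the Farey tree.

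For monotonicity, recall first (Markov's theory) that the Farey parametrisation reverses the natural order of $\mathbb{X}$, so $\Lambda_\mathbb{X}$ is increasing in $x$ if and only if $\Lambda\circ x$ is decreasing in $\tfrac pq$. A convex function on $[0,\tfrac12]$ whose left slope at $\tfrac12$ is $\le 0$ is non-increasing, so it is enough to estimate $\Lambda\circ x$ near $\tfrac pq=\tfrac12$. Along $\tfrac{n}{2n+1}\uparrow\tfrac12$ the Markov numbers satisfy $m_{n+1}=6m_n-m_{n-1}$ with $m_1=5$, $m_2=29$, hence $m_n=a\mu^n+b\mu^{-n}$ with $\mu=(1+\sqrt2)^2$, $a=\tfrac{2+\sqrt2}{4}$, $b=\tfrac{2-\sqrt2}{4}$; substituting into the formula, the inequality $\Lambda(x(\tfrac{n}{2n+1}))>\Lambda(x_2)$ becomes, after applying $\cosh$, $(3a-\sqrt\mu)\,\mu^n+(3b-\mu^{-1/2})\,\mu^{-n}>0$, and indeed $3a-\sqrt\mu=\tfrac{2-\sqrt2}{4}>0$ and $3b-\mu^{-1/2}=\tfrac{10-7\sqrt2}{4}>0$, so this holds for every $n\ge 1$. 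Thus the left slope of $\Lambda\circ x$ at $\tfrac12$ is $\le 0$; by convexity all chord slopes are $\le 0$, so $\Lambda\circ x$ is non-increasing, and strictly decreasing because the convexity inequality is strict (on a constancy interval it would have to become an equality). This gives the strict increase of $\Lambda_\mathbb{X}$ from $\Lambda(x_2)$ to $\Lambda(x_1)$.

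The real work is the combinatorial bookkeeping behind the basic formula --- identifying the path of $x_m$ with the Cohn word so that $\operatorname{tr}M_m=3m$, and checking that one Markov period has partial quotients summing to $2q$; both rest on Markov's classical theory and the Christoffel-word calculus and must be stated carefully, and I expect this, rather than any individual inequality, to be the main obstacle. A smaller point is the passage from the Stern--Brocot (mediant) form of convexity to genuine convexity of $\Lambda\circ x$ on the dense set of Farey fractions, which is needed for the ``convex $+$ boundary slope $\le 0$ $\Rightarrow$ monotone'' step; this is a short induction on Stern--Brocot depth that should be spelled out separately.
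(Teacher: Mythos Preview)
Your proposal is correct and takes a genuinely different route from the paper's. Both arguments rest on the same explicit formula $\Lambda\bigl(x(\tfrac pq)\bigr)=\tfrac{1}{2q}\arcosh\bigl(\tfrac32\,m(\tfrac pq)\bigr)$, obtained by identifying the period matrix with the Cohn matrix of trace $3m$ and checking that one period has length $2q$. The divergence is in how convexity and monotonicity are extracted from this formula. The paper invokes Fock's theorem: writing $\arcosh(\tfrac32 m(\tfrac pq))=L(p,q)$, the length of the simple closed geodesic in class $(p,q)$ on the equianharmonic punctured torus, convexity of $\psi(\xi)=L(\xi,1)$ is a consequence of subadditivity of the stable norm, and monotonicity follows from the geometric symmetry $L(p,q)=L(q-p,q)$, which forces the minimum of $\psi$ to sit at $\xi=\tfrac12$. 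You instead verify the mediant inequality $\arcosh(\tfrac32 m_p)\le\arcosh(\tfrac32 m_a)+\arcosh(\tfrac32 m_c)$ by a direct algebraic reduction via the Vieta relations $m_p+m_p'=3m_am_c$, $m_pm_p'=m_a^2+m_c^2$, collapsing it to $0<16$; this is entirely elementary and bypasses the hyperbolic geometry. The price is the extra bookkeeping you already flag: upgrading mediant convexity to full convexity via Stern--Brocot induction (which in the paper's language is just the passage from subadditivity on unimodular pairs to a norm), and replacing the one-line symmetry argument for monotonicity by your explicit estimate along $\tfrac{n}{2n+1}\to\tfrac12$. That estimate is correct as written, but note that the same symmetry is visible combinatorially---swapping the roles of the two initial Cohn matrices reflects the Farey parameter across $\tfrac12$ while preserving traces---and using it would let you drop the linear-recurrence computation entirely.
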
 

The proof is based on the relation of Markov numbers with geodesics on the punctured torus with hyperbolic metric, which was found by  Gorshkov \cite{Gorshkov} in his thesis in 1953 and, independently, by Cohn \cite{Cohn}.
Since then this relation has been very much in use, see in particular, Goldman \cite{Goldman}, Bowditch \cite{Bowditch} and a nice exposition by Series \cite{Series}.

Our general approach is close to Chekhov and Penner \cite{Chekhov}, who discussed similar questions in quantum theory of Teichm\"uller spaces.
The key result for us is due to V. Fock \cite{Fock}, who proved using Thurston's laminations that a certain function defined in terms of Markov numbers can be extended to a convex function on a real interval.

We present also a generalisation of these results to the countable sets $\mathbb{X}_a$ of quadratic irrationals depending on a natural number $a$. They are related to the solutions of the Diophantine equation
$$
X^2+Y^2+Z^2=XYZ+4-4a^6, \, a \in \mathbb N,
$$
studied by Mordell \cite{Mordell}, and geometrically to the geodesics on the one-hole hyperbolic tori.
For $a=1$ we have the scaled Markov equation and Markov-Hurwitz set $\mathbb{X}_1=\mathbb{X}.$

% (see also Goldman \cite{Goldman}).

%One can consider this also as a growth problem for the modular group $SL_2(\mathbb Z)$

%The Markov tree $\mathfrak{M}$ is a binary tree such that
%\begin{itemize}
%\item $\Omega \left( \mathfrak{M} \right) \subset \mathbb{N}.$ 
%\item $e \in E \left( \mathfrak{M} \right)$ meets four regions $x$, $y$, $z$, $z'=3xy-z$ $\in \Omega$ such that $e=x \cap y$ and $e \cap z$, $e \cap z'$ are the two endpoints of $e$.

%\begin{figure}[h]
%\centering
%\includegraphics[scale=0.25]{Markov_Fourpod}
%\label{fig:MarkovFour}
%\end{figure}

\section{Farey tree, monoid $SL_2(\mathbb N)$ and Lyapunov exponent}

Let $\mathcal T$ be a binary (= $3$-valent) tree.  It is well-known (see e.g. nicely written notes by Hatcher \cite{Hatcher}) that $\mathcal{T}$ can be embedded in the hyperbolic plane $\mathbb{H}$ as the dual graph to the Farey tessellation of $\mathbb{H}$ into ideal triangles (see left hand side of Fig. 4, which we have borrowed from \cite{Hatcher} with author's permission). 

\begin{figure}[h]
\begin{center}
\includegraphics[height=38mm]{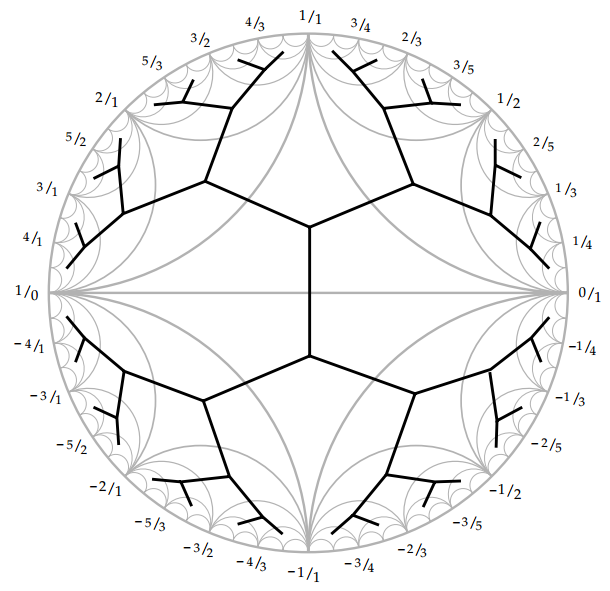}  \hspace{8pt}  \includegraphics[height=38mm]{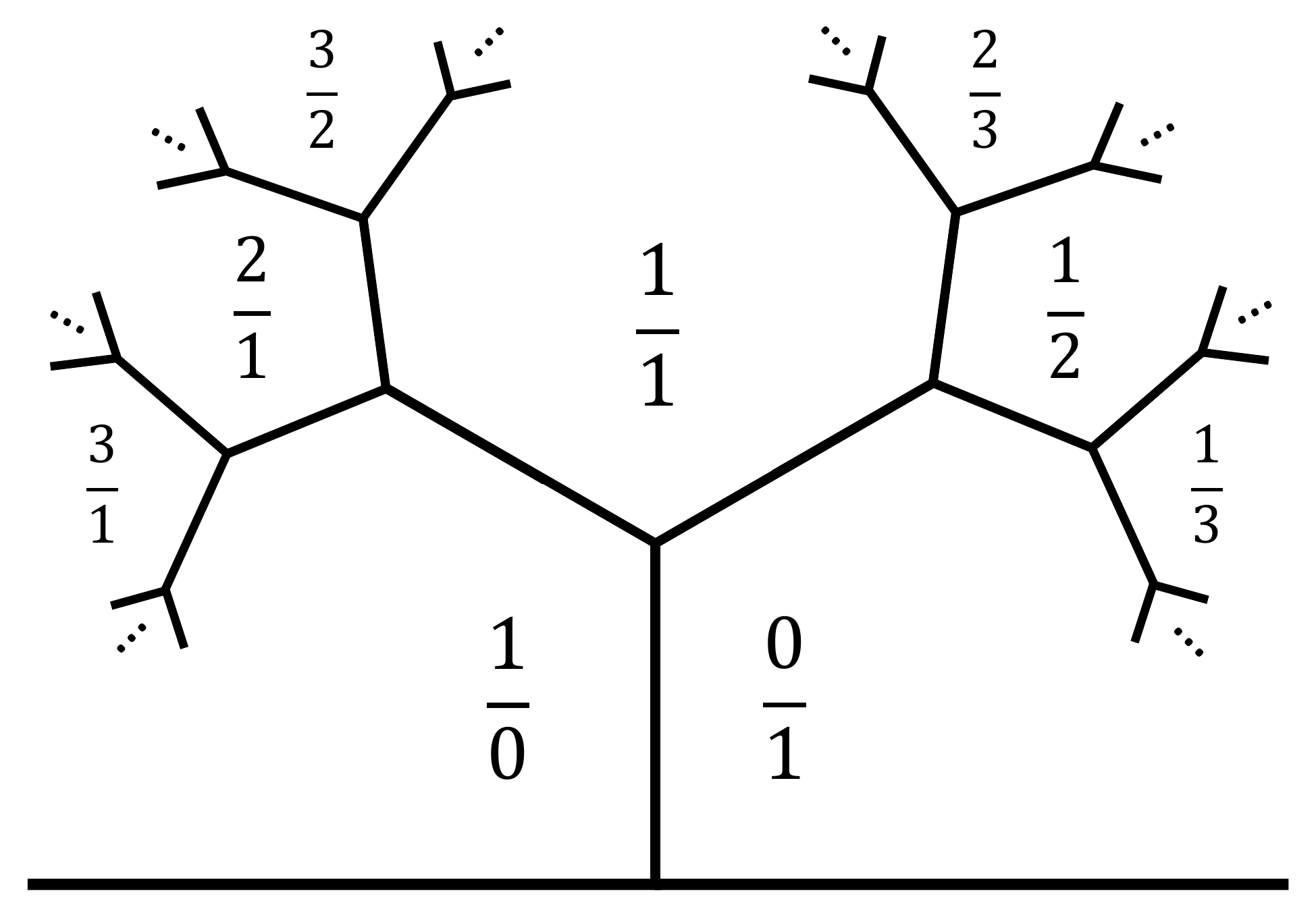}
\caption{\small Dual tree for Farey tessellation and positive Farey tree}
\end{center}
\end{figure}

It will be enough for us to consider only the upper half of the tree, which can be considered as the Farey tree $T_F$ of all positive fractions (see Fig. 4).
The Farey tree shown in Fig. 2 is the branch of this tree corresponding to the fractions lying between 0 and $\frac{1}{2}$.

%\begin{figure}[h]
%\begin{center}
%\hspace{10mm} \includegraphics[width=78mm]{FareyTreeLarge}  %\hspace{8pt}  \includegraphics[width=6.0cm]{FareyPlusDualHalf}
%\caption{\small Farey tree}
%\end{center}
%\end{figure}

Let $SL_2(\mathbb N) \subset SL_2(\mathbb Z)$ be the set of matrices with non-negative entries. Such matrices are closed under multiplication and contain the identity, and thus form a monoid.

The positive Farey tree gives a nice parametrisation of this monoid. Indeed, for every (naturally oriented) edge $E$ of $T_F$ we have two fractions $\frac{a}{c}$, $\frac{b}{d}$ adjacent to it, so we can consider the matrix
$$
A_E=\begin{pmatrix}
  a & b \\
  c & d \\
\end{pmatrix},
$$
which belongs to $SL_2(\mathbb N)$. This is in a good agreement with Frobenius \cite{Frobenius}, who considered the pairs of coprime numbers $(p,q)$ rather than fractions $\frac{p}{q}.$ One can easily show that every matrix $A \in SL_2(\mathbb N)$ appears in this way exactly once. 

Recall now that the {\it spectral radius} $\rho(A)$ of a matrix $A$ is defined as the maximum of the modulus of its eigenvalues. For a non-triangular (hyperbolic) matrix $A$ from $SL_2(\mathbb N)$ 
the eigenvalues are $\lambda, \lambda^{-1}$, where $\lambda=\lambda(A)>1$ and
$$
\rho(A)=\lambda(A) 
$$
(for triangular (parabolic) matrices $\rho(A)=1$).

Consider now the path $\gamma(x)$ in the Farey tree.

\begin{prop} 
\label{spect}
The Lyapunov exponent can be equivalently defined as
\beq{defeucl2}
\Lambda(x)=\limsup_{n\to\infty}\frac{\ln \rho(A_n(x))}{n},
\eeq
where $A_n(x) \in SL_2(\mathbb N)$ is attached to $n$-th edge along path $\gamma(x)$
and $\rho(A)$ is the spectral radius of matrix $A$.
\end{prop}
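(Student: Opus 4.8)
The plan is to show that along any path $\gamma(x)$, the $n$-th Euclid triple $(u_n,v_n,w_n)$ is encoded by the product of the matrices $A_k(x)$ attached to the first $n$ edges, so that $w_n(x)$ and $\rho(A_n(x))$ grow at comparable rates. First I would recall the standard dictionary: at each vertex of the Farey tree one chooses ``left'' or ``right'', and these two moves correspond to the generators $L=\begin{pmatrix}1&0\\1&1\end{pmatrix}$ and $R=\begin{pmatrix}1&1\\0&1\end{pmatrix}$ of the monoid $SL_2(\mathbb N)$; the matrix $A_n(x)$ attached to the $n$-th edge is exactly the product $g_1 g_2\cdots g_n$ of the first $n$ such generators (with $A_0=I$), by the uniqueness statement proved in the excerpt. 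Since the Euclid recursion $u+v=w$ together with a choice of which entry to replace is precisely the action of $L$ or $R$ on the column vector of the current pair, the largest entry $w_n(x)$ of the $n$-th triple equals (up to an additive/multiplicative constant coming from the initial triple) the largest entry of $A_n(x)$, hence is comparable to the operator norm $\|A_n(x)\|$.

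Next I would compare $\|A_n(x)\|$ with $\rho(A_n(x))$. The inequality $\rho(A)\le\|A\|$ is automatic, giving $\limsup \tfrac1n\ln\rho(A_n)\le \limsup\tfrac1n\ln\|A_n\|$. For the reverse direction one uses that all entries of $A_n\in SL_2(\mathbb N)$ are nonnegative and the matrix is ``eventually positive'' along any genuinely infinite path (after one more generator of the opposite type all four entries are strictly positive), so by Perron–Frobenius-type estimates $\operatorname{tr} A_n$ is comparable to $\|A_n\|$; since for $SL_2$ matrices $\rho(A)=\tfrac12\bigl(\operatorname{tr} A+\sqrt{(\operatorname{tr} A)^2-4}\bigr)\asymp \operatorname{tr} A$ when $\operatorname{tr} A$ is large, we get $\ln\rho(A_n)=\ln\|A_n\|+O(1)$, and the $O(1)$ washes out after dividing by $n$ and taking $\limsup$. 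This yields
\[
\limsup_{n\to\infty}\frac{\ln w_n(x)}{n}=\limsup_{n\to\infty}\frac{\ln\|A_n(x)\|}{n}=\limsup_{n\to\infty}\frac{\ln\rho(A_n(x))}{n},
\]
which, combined with the equivalence of \eqref{defL} and \eqref{defeucl} already granted via the Mordell substitution \eqref{mod}–\eqref{modch}, identifies $\Lambda(x)$ with the spectral-radius expression \eqref{defeucl}.

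One subtlety I would address carefully is the degenerate case in which the path is eventually constant (all moves eventually $L$, or eventually $R$), so that $A_n(x)$ becomes parabolic with $\rho(A_n)=1$: here the numerator on the right is eventually $0$, and one must check that $w_n(x)$ then grows only linearly, so that $\ln w_n(x)/n\to 0$ as well — consistent, since a purely parabolic tail contributes no exponential growth. I expect the main obstacle to be precisely this uniform control of the lower-order terms: making rigorous the claim that the ratio between $w_n(x)$ (resp. $\|A_n(x)\|$, $\operatorname{tr} A_n(x)$, $\rho(A_n(x))$) stays within constant multiplicative factors independent of $n$ and of $x$, uniformly over all paths including those with long parabolic stretches. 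This is handled by noting that each generator $L,R$ has norm bounded above and below by absolute constants on the positive cone, so the comparison constants are genuinely uniform, and the $\limsup$ construction is insensitive to them.
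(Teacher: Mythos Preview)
Your overall route---identify $w_n(x)$ with the maximal entry of $A_n(x)$, then compare $\|A_n\|$, $\operatorname{tr}A_n$ and $\rho(A_n)$---is essentially the paper's. The gap is precisely at the step you yourself flag as the ``main obstacle'': your proposed resolution of it does not work.

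The claim that $\operatorname{tr}A_n$ and $\|A_n\|$ (or $\rho(A_n)$ and $w_n$) are comparable with constants \emph{independent of $n$ and $x$} is false, and the justification you give is a non-sequitur. Bounding the norms of the generators $L,R$ above and below only yields $\|A_n\|\le\varphi^{\,n}$; it says nothing about $\rho(A_n)/\|A_n\|$ from below. Nor does positivity of the entries plus ``Perron--Frobenius'' force the trace to be comparable to the largest entry: already for $x=0$ one has $A_n(0)$ parabolic with $\rho(A_n)=1$ and $w_n=n$, and for small irrational $x>0$ the ratio $\rho(A_n)/w_n$ along the path is of order $x$, not a universal constant. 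Long parabolic stretches are exactly where your uniform claim breaks down, and they are not rescued by the generator-norm remark.

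What the paper does instead is use the specific geometry of the Farey tree: for $x\in(0,1]$ the two Farey fractions $a_n/c_n$ and $b_n/d_n$ adjacent to the $n$-th edge both converge to $x$, and on this half of the tree one always has $a_n<c_n$, $b_n<d_n$, so the maximal entry is $w_n=\max(c_n,d_n)$. Hence for all large $n$ one has $a_n>\tfrac{x}{2}c_n$, giving
\[
\lambda_n\;\ge\;\tfrac12(a_n+d_n)\;>\;\tfrac12\Bigl(\tfrac{x}{2}c_n+d_n\Bigr)\;>\;\tfrac{x}{4}\,w_n,
\qquad
\lambda_n\;\le\;a_n+d_n\;<\;c_n+d_n\;\le\;2w_n.
\]
So $\tfrac{x}{4}\,w_n<\lambda_n<2w_n$ for large $n$, which is an $x$-dependent sandwich; since $x$ is fixed in the statement this suffices, and dividing by $n$ and taking $\limsup$ gives the result. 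The boundary case $x=0$ is treated separately exactly as you suggest. In short: your plan is right, but the missing ingredient is the convergence $a_n/c_n\to x$, not any uniform Perron--Frobenius estimate.
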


\begin{proof}
Let us assume for convenience that $x \in [0,1]$, which corresponds to right half of the positive Farey tree shown in Fig. 4.
The left half of the tree is related by $x \rightarrow 1/x$ and the change of matrices
$$
A=\begin{pmatrix}
  a & b \\
  c & d \\
\end{pmatrix}
\rightarrow 
\begin{pmatrix}
  d & c \\
  b & a \\
\end{pmatrix}
=S^{-1}AS, \quad
S=\begin{pmatrix}
  0 & 1 \\
  1 & 0\\
\end{pmatrix}.
$$ 

\begin{figure}[h]
\begin{center}
 \includegraphics[height=30mm]{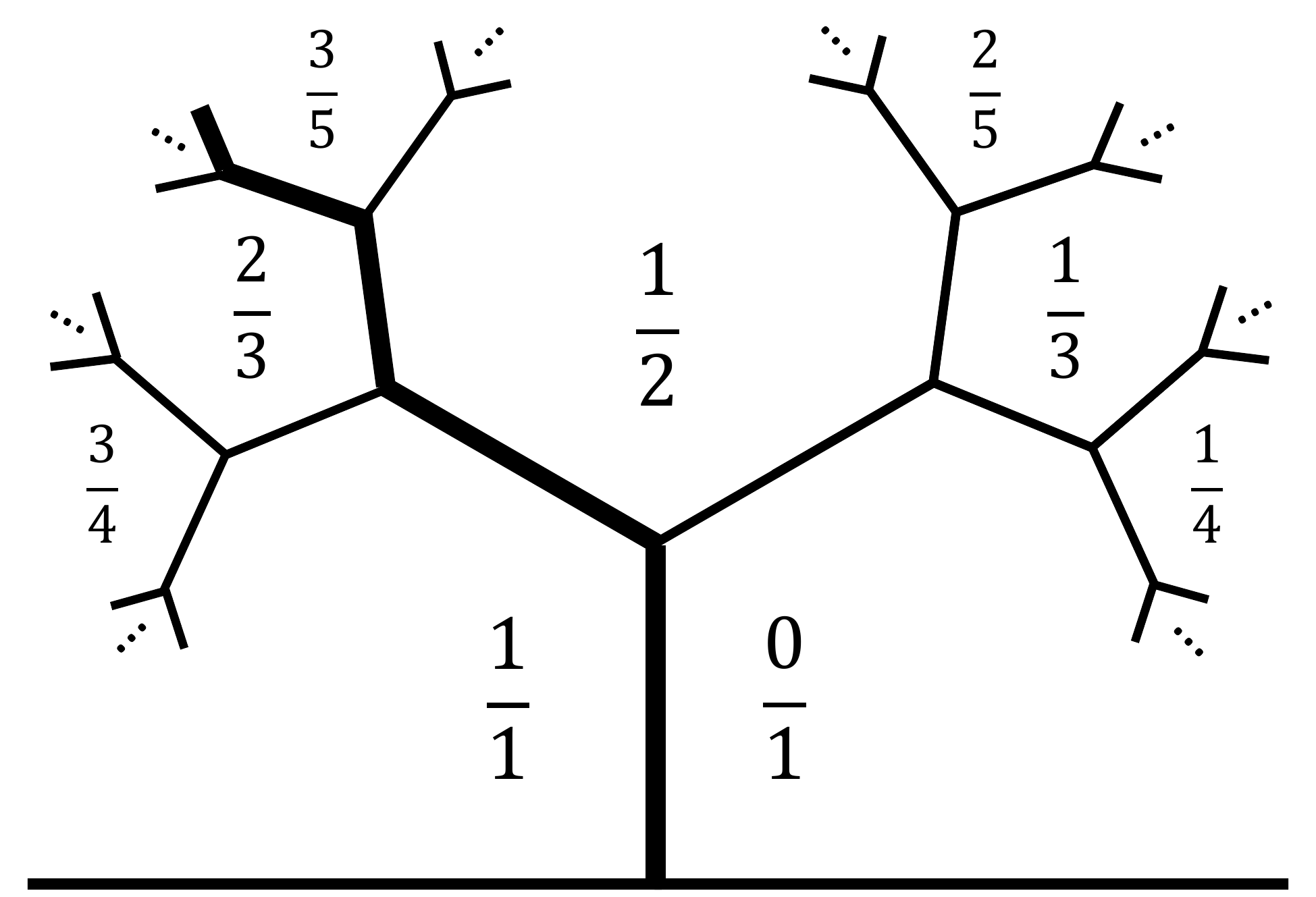}  \hspace{8pt}  \includegraphics[trim = 0mm 25mm 0mm 30mm, clip, height=30mm]{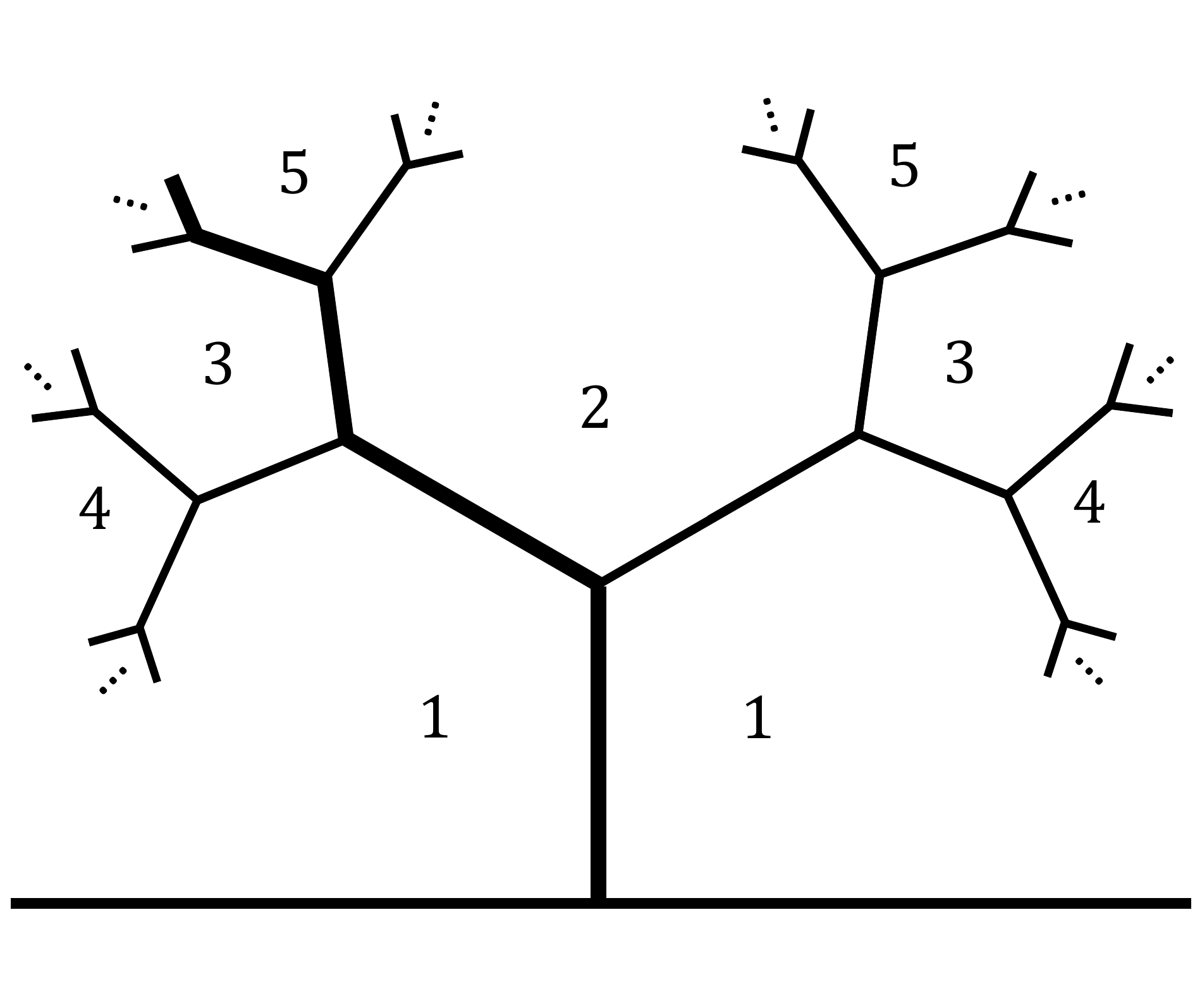}
\caption{\small Farey and Euclid rooted trees with a path}
\end{center}
\end{figure}

%\begin{figure}[h]
%\begin{center}
%\hspace{10mm} \includegraphics[width=48mm]{FareyTreeBold_Large}  \hspace{8pt}  \includegraphics[width=48mm]{EuclidTreeLarge11}
%\caption{\small Farey and Euclid half-trees}
%\end{center}
%\end{figure}

We see that the denominators of the fractions form the Euclid tree shown on the right of the Figure 5.
Let 
$$
A_n(x)=\begin{pmatrix}
  a_n & b_n \\
  c_n & d_n \\
\end{pmatrix}
$$
be the matrix assigned to $n$-th edge of $\gamma(x)$. Then $$w_n(x)=\max(c_n(x),d_n(x))$$ is the corresponding sequence from the Euclid tree.

Let $\lambda_n$ be the maximal eigenvalue of $A_n(x)$. Since  $\lambda_n + \lambda_n^{-1}=a_n+d_n$ with $\lambda_n^{-1}\leq 1$ we have 
$$
 \lambda_n \leq a_n+d_n <c_n+d_n \leq 2\max(c_n, d_n)=2 w_n,
$$
where we have used that $a_n<c_n$, which is valid on this half of the tree.

To have the estimate of $\lambda_n$ from below we need to consider the cases $x=0$ and $x>0$ separately.
For $x=0$
$$
A_n(0)=\begin{pmatrix}
  1 & 0 \\
  n & 1 \\
\end{pmatrix}
$$
with $\lambda_n=1$ and $w_n=n$, so
$$
\limsup_{n\to\infty}\frac{\ln \lambda_n}{n}=0=\limsup_{n\to\infty}\frac{\ln w_n}{n}=\limsup_{n\to\infty}\frac{\ln n}{n}
$$
in this case.

If $x>0$ since $a_n/c_n \to x$ as $n \to \infty$ we have for large $n$ the inequality
$a_n>\frac{x}{2}c_n$. This means that
$$
\lambda_n\geq\frac{1}{2}(a_n+d_n)>\frac{1}{2}\left(\frac{x}{2}c_n+d_n\right)>\frac{x}{4}\max(c_n,d_n)=\frac{x}{4}w_n.
$$
Thus we have for $x>0$ and large $n$ that
$
\frac{x}{4}w_n(x) < \lambda_n(x)<2w_n(x),
$
which implies that
\beq{lims}
\limsup_{n\to\infty}\frac{\ln \lambda_n(x)}{n}=\limsup_{n\to\infty}\frac{\ln w_n(x)}{n}
\eeq
provided any of these limits exists, which we show next to be the case for all $x$.
\end{proof}

%Now we are going to show that the limit does exist for all paths.

Note that under our assumptions $w=\max(c,d)=||A||_\infty,$
where the norm $||A||_\infty$ of a real matrix $$
A=\begin{pmatrix}
  a & b \\
  c & d \\
\end{pmatrix}
$$
is defined as
$$
||A||_\infty:=\max(|a|,|b|,|c|,|d|),
$$
and so from relation (\ref{lims}) it follows that
\beq{lims1}
\Lambda(x)=\limsup_{n\to\infty}\frac{\ln ||A_n(x)||_\infty}{n}.
\eeq

%Now we are going to show that the limit does exist for all paths.

\begin{Theorem}
The Lyapunov exponent $\Lambda(x)$ exists for all real $x\geq 0$ and satisfies 
\beq{ineq}
0\leq \Lambda(x)\leq \ln \varphi,
\eeq
where $\varphi= \frac{1+\sqrt 5}{2}$ is the golden ratio, 
and every value in $[0, \ln \varphi]$ is attained.
\end{Theorem}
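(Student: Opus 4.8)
The plan is to prove the theorem in three stages: first upper bound $\Lambda(x) \le \ln\varphi$ for all $x$, then establish existence of the $\limsup$ (as a genuine limit) for every $x$, and finally show every value in $[0,\ln\varphi]$ is attained.

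For the upper bound, I would exploit the tree structure of the matrices $A_n(x)$. Moving one step down the Farey tree corresponds to right-multiplication by one of the two generators $L=\begin{pmatrix}1&0\\1&1\end{pmatrix}$ and $R=\begin{pmatrix}1&1\\0&1\end{pmatrix}$ of the monoid $SL_2(\mathbb N)$, so $A_n(x) = G_1 G_2 \cdots G_n$ with each $G_i \in \{L,R\}$ (up to an initial edge and the harmless $S$-conjugation from the proof of Proposition \ref{spect}). Then $\rho(A_n) \le \|A_n\| \le \|G_1\|\cdots\|G_n\|$ in any submultiplicative matrix norm; but that naive bound gives $\|L\|=\|R\|=\varphi$ only for a cleverly chosen norm, and even so would be attained only when all $G_i$ are equal. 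The sharper route is to observe that the growth rate of $\|G_1\cdots G_n\|$ is governed by the \emph{joint spectral radius} type estimate: a product of $n$ matrices from $\{L,R\}$ has spectral radius at most that of the ``most unbalanced'' periodic word, and since $L$ and $R$ are conjugate the extreme is the constant word $L^n$ (or $R^n$) — but $\rho(L^n)=1$! So in fact the maximal growth comes not from a constant word but from the alternating word $(LR)^{n/2}$, since $\rho(LR) = \rho\begin{pmatrix}2&1\\1&1\end{pmatrix} = \varphi^2$, giving rate $\ln\varphi$ per step. The clean way to make this rigorous: show that for any word $w$ in $L,R$ of length $n$, $\rho(w) \le \rho((LR)^{\lceil n/2\rceil})$, e.g.\ by the Fibonacci-type recursion satisfied by entries of such products (each entry of $A_{n+1}$ is a sum of two entries of $A_n$, forcing entrywise domination by Fibonacci numbers), so that all entries of $A_n$ are at most $F_{n+1} \sim \varphi^{n+1}/\sqrt5$, hence $\rho(A_n) \le 2F_{n+1}$ and $\Lambda(x) \le \ln\varphi$. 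This also simultaneously delivers $\Lambda(x)\ge 0$ trivially, and the value $\Lambda(\varphi)=\ln\varphi$ is realized by the golden path, along which $A_{2n}(\varphi)$ is (conjugate to) $(LR)^n$.

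For existence of the limit, the issue is that a priori only $\limsup$ is defined. I would argue via subadditivity: $\ln\rho$ is not subadditive, but $\ln\|A_n(x)\|$ is, along a \emph{fixed} path, because $A_{n+m}(x) = A_n(x)\cdot(\text{tail product})$ and the tail product is itself the matrix attached to a path in the tree starting from the $n$-th vertex — and here one must be slightly careful, since the tail is the product of the \emph{shifted} continued-fraction digits of $x$, which is exactly the matrix along the path $\gamma(x')$ for the Gauss-shifted point $x'$. So strict subadditivity fails, but one gets $\|A_{n+m}(x)\| \le \|A_n(x)\|\,\|A_m(\text{shift}^n x)\|$; combined with the uniform two-sided comparison $\rho(A_n) \asymp \|A_n\| \asymp w_n$ (the $w_n$ are easier, being just Euclid-tree entries with $w_{n+m} \le$ product-type bounds), and the fact that $w_n(x)$ genuinely satisfies Fekete-type subadditivity $\ln w_{n+m}(x) \le \ln w_n(x) + \ln w_m(\text{shift}^n x) + O(1)$, I can upgrade to a true limit. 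The cleanest formulation: fix $x$, and note the denominators $q_n$ of the continued-fraction convergents of $x$ satisfy $q_{n+m} \le q_n q_{m}' \cdot 2$ for the appropriate shift, and $w_n(x)$ for the path determined by $x$ is comparable to $q_n$; Fekete's lemma (in its form allowing a bounded error and a shift, i.e.\ Kingman-subadditive-type reasoning, or just directly since the shift only rescales) then gives existence of $\lim \frac{\ln w_n}{n}$.

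For surjectivity onto $[0,\ln\varphi]$, I would use a continuity/intermediate-value argument on a one-parameter family of paths interpolating between a path with $\Lambda=0$ (e.g.\ $x=0$, the all-$L$ path, where $w_n = n$) and the golden path with $\Lambda=\ln\varphi$. Concretely, for $t \in [0,1]$ consider the point $x_t$ whose continued fraction consists of long blocks: spend a $t$-fraction of the time alternating ($LR\cdots$) and a $(1-t)$-fraction doing $L^k$ for large $k$; by the subadditive cocycle structure the Lyapunov exponent of such a path is approximately $t\ln\varphi$, and letting the block structure vary continuously (and invoking that $\Lambda$ is a limit of the continuous-in-$x$ quantities $\frac{\ln w_n}{n}$ on the Cantor-like set of such $x_t$) shows the image is all of $[0,\ln\varphi]$. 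The main obstacle I anticipate is precisely this last step done rigorously: $\Lambda$ is wildly discontinuous in general (it vanishes a.e.\ yet is positive on a dense set), so ``intermediate value'' must be engineered by \emph{constructing} paths with prescribed exponent rather than by an abstract connectedness argument — the construction of an explicit family realizing each intermediate rate, and verifying its Lyapunov exponent equals the target, is where the real work lies.
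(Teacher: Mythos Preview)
Your upper-bound argument via Fibonacci domination of matrix entries is correct, but you have dismissed too quickly the very approach the paper actually takes. The Euclidean operator norm of $L$ and $R$ \emph{is} exactly $\varphi$: for $L=\begin{pmatrix}1&1\\0&1\end{pmatrix}$ one has $L^{*}L=\begin{pmatrix}1&1\\1&2\end{pmatrix}$ with largest eigenvalue $\varphi^2$, so $\|L\|=\|R\|=\varphi$ in the standard norm, no cleverness needed. Submultiplicativity then gives $\rho(A_n)\le\|A_n\|\le\varphi^n$ in one line. Your worry that equality ``would be attained only when all $G_i$ are equal'' is misplaced: we only need the inequality, and the golden path $(RL)^n$ shows the bound is asymptotically sharp.

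The more serious issue is your second stage. You read ``$\Lambda(x)$ exists'' as ``the $\limsup$ is a genuine limit'' and spend effort on a Fekete/Kingman-type argument. But $\Lambda(x)$ is \emph{defined} as a $\limsup$, so ``exists'' here just means it is finite --- which is immediate from the upper bound. What you are trying to prove is strictly stronger and in fact \emph{false} for general $x$: take a path that alternates increasingly long blocks of $(RL)$'s and of $R$'s with slowly oscillating proportions; then $\frac{1}{n}\ln w_n$ genuinely oscillates between two distinct values. Your subadditivity sketch cannot close this gap, because $A_{n+m}(x)=A_n(x)\cdot A_m(\text{shift}^n x)$ involves a different base point, so Fekete's lemma does not apply to a single orbit. (The paper only claims, in a later theorem, that the limit exists for \emph{almost every} $x$, via L\'evy's theorem --- not for all $x$.)

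For surjectivity your instinct is right: one must \emph{construct} paths rather than invoke an intermediate-value principle. The paper's construction is a clean ``bang-bang'' control: prove a lemma that for any $B\in SL_2(\mathbb N)$ one has $\frac{1}{n}\ln\rho(BR^n)\to 0$ and $\frac{1}{2n}\ln\rho(B(RL)^n)\to\ln\varphi$; then, starting from any $A_0$, repeatedly right-multiply by $RL$ until $\rho$ overshoots $e^{n\Lambda_0}$, then by $R$ until it undershoots, and iterate. Your block-interpolation idea is in the same spirit but less explicit; the paper's version avoids having to analyse a parametrised family and verifies the target exponent directly.
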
 

\begin{proof}
Recall that the {\it operator norm} of a matrix $A$ acting on a Euclidean space is defined as
$$
||A||:=\max_{|x|=1}|Ax|.
$$
The norm is related to the spectral radius by the formula
$$
||A||^2=\rho(A^*A)
$$
and satisfies the inequalities (see e.g. \cite{Lax})
$$
\rho(A)\leq ||A||
$$
and
$$
||AB||\leq ||A||\cdot||B||.
$$

Now note that the matrices $A_n(x)$ along a path $\gamma(x)$ have the product form
$$
A_n(x)=X_1\dots X_n,
$$
where $X_i$ are either $L$ or $R$ defined as
$$
L=\begin{pmatrix}
  1 & 1 \\
  0 & 1 \\
\end{pmatrix}, \quad
R=\begin{pmatrix}
  1 & 0 \\
  1 & 1 \\
\end{pmatrix},
$$
depending on whether we turn left or right on the tree.

Since 
$$
RL=\begin{pmatrix}
  1 & 1 \\
  1 & 2 \\
\end{pmatrix}
$$
has maximal eigenvalue $$\lambda(RL)=\frac{3+\sqrt 5}{2}=\left(\frac{1+\sqrt 5}{2}\right)^2,$$
the norms
$$
||L||=||R||=\frac{1+\sqrt 5}{2}=||X_i||.
$$
Therefore
$$
\rho(A_n)\leq ||A_n|| \leq ||X_1\dots X_n|| \leq ||X_1|| \dots ||X_n|| =\left(\frac{1+\sqrt 5}{2}\right)^n,
$$
which implies that the sequence
$$\frac{\ln \rho(A_n)}{n} \leq \ln \frac{1+\sqrt 5}{2}$$ is bounded.
In particular,
$$
\Lambda(x) = \limsup_{n \to \infty} \frac{\ln \rho(A_n(x))}{n}
$$
exists and satisfies the inequality
$$
\Lambda(x) \leq \ln \frac{1+\sqrt 5}{2}.
$$

The equality is attained at $x=\frac{\sqrt 5-1}{2}$ since the corresponding 
$$A_{2n}=(RL)^n=\begin{pmatrix}
  1 & 1 \\
  1 & 2 \\
\end{pmatrix}^n.
$$
Similarly, for the very right path $\gamma_0$ we have $$A_n=R^n=\begin{pmatrix}
  1 & 0 \\
  1 & 1 \\
\end{pmatrix}^n =\begin{pmatrix}
  1 & 0 \\
  n & 1 \\
\end{pmatrix},
$$
and thus 
$\Lambda(0)=0.$ 

To show that every value $\Lambda_0 \in(0, \ln \varphi)$ is attained we will use the following lemma.

Let $X_n, \, n\in \mathbb N$ be a sequence of matrices, which are equal to either $L$ or $R$
and $B$ is any matrix from $SL_2(\mathbb N).$
Consider the products
$$
A_n=X_1\dots X_n, \quad B_n=BX_1\dots X_n, \, n \in \mathbb N.
$$

\begin{Lemma}
For every matrix $B \in SL_2(\mathbb N)$
\beq{limits0}
\limsup_{n\to\infty}\frac{\ln ||B_n||}{n}=\limsup_{n\to\infty}\frac{\ln ||A_n||}{n}.
\eeq
In particular, for every such $B$
\beq{limits}
\lim_{n \to \infty} \frac{\ln ||BR^n||}{n}=0, \quad \lim_{n \to \infty} \frac{\ln ||B(RL)^n||}{2n}=\ln \varphi.
\eeq
\end{Lemma}

Indeed, $B_n=BA_n, A_n=B^{-1}B_n$, so
$$
||A_n||/||B^{-1}|| \leq ||B_n||\leq ||B|| ||A_n||,
$$
from which (\ref{limits0}) follows.
%Indeed, let $$B=\begin{pmatrix}
%  a & b \\
%  c & d \\
%\end{pmatrix}, \quad A_n=\begin{pmatrix}
%  a_n & b_n \\
%  c_n & d_n \\
%\end{pmatrix},$$
%and assume again for convenience that $a_n<c_n, \, b_n < d_n.$
%Then
%$$B_n=\begin{pmatrix}
%  \hat a_n & \hat b_n \\
%  \hat c_n & \hat d_n \\
%\end{pmatrix},$$
%where $\hat c_n=c a_n+d c_n,\, \hat d_n=c b_n+d d_n,$ so
%$$
%\hat w_n=\max (c a_n+d c_n, c b_n+d d_n) \geq \max(d c_n, d d_n)=d w_n.
%$$
%On the other hand, since $a_n<c_n, \, b_n <d_n$
%$$
%\max (c a_n+d c_n, c b_n+d d_n)<\max (c c_n+d c_n, c d_n+d d_n)=(c+d)\max(c_n,d_n).
%$$
%Thus we have 
%$$
%d w_n <\hat w_n<(c+d) w_n
%$$
%which implies that 
%$$
%\limsup_{n\to\infty}\frac{\ln \hat w_n}{n}=\limsup_{n\to\infty}\frac{\ln w_n}{n}
%$$
%and the claim follows from Proposition \ref{spect}.
The second part follows from the equalities
$$
\lim_{n\to\infty}\frac{\ln ||R^n||}{n}=0, \quad \lim_{n\to\infty}\frac{\ln ||(RL)^n||}{2n}=\ln \varphi,
$$
which are easy to check.

%$$
%R^NA=\begin{pmatrix}
%  1 & 0 \\
%  N & 1 \\
%\end{pmatrix}\begin{pmatrix}
%  a & b \\
%  c & d \\
%\end{pmatrix}=\begin{pmatrix}
%  a & b \\
%  aN+c & bN+d \\
%\end{pmatrix}.
%$$
%Since for any $B \in SL_2(\mathbb N)$ the spectral radius $\rho(B) < tr \, B$, we have
%$
%\rho (R^NA) < a+bN+d,
%$
%and thus
%$$
%\lim_{N \to \infty} \frac{\ln \rho(R^NA)}{N}\leq \lim_{N \to \infty} \frac{\ln (a+bN+d)}{N}=0.
%$$
%The second limit is proved by similar elementary arguments.

Now the strategy is the following: we start with the matrix $A_0=I$ and consider 
$$A_2 =RL=\begin{pmatrix}
 1 & 1 \\
  1 & 2 \\
\end{pmatrix}
$$
with $$\frac{\ln ||A_2||}{2}=\ln \varphi.$$ Apply multiplication by $R$ from the right several times until
we get to the matrix $A_n$ with
$$
\frac{\ln ||A_n||}{n}<\Lambda_0.
$$
As soon as this happens we start multiplying from the right by matrix $RL$ until we have matrix $A_m$ with $$
\frac{\ln ||A_m||}{m}>\Lambda_0
$$ and then repeat all this.
It is easy to see that this process leads to a sequence of matrices $A_n$ such that
$$
\lim_{n \to \infty} \frac{\ln ||A_n||}{n}=\Lambda_0.
$$
Indeed, say for $A_{n+1}=A_n R$, we have
$$
||A_n||/||R^{-1}|| \leq ||A_{n+1}||\leq ||R|| ||A_n||,
$$
which implies that
$$
\frac{\ln ||A_n|| - \ln ||R^{-1}||}{n+1} \leq \frac{\ln ||A_{n+1}||}{n+1}\leq \frac{\ln ||R||+\ln ||A_n||}{n+1},
$$
so that the steps $$\frac{\ln ||A_{n+1}||}{n+1}-\frac{\ln ||A_{n}||}{n}$$ turn to zero as $n$ goes to infinity.

To complete the proof we use the fact that any two norms on a finite-dimensional vector space are equivalent. In particular,
we have 
$$
c_1 ||A||_\infty \leq ||A|| \leq c_2 ||A||_\infty
$$
for some positive constants $c_1$ and $c_2$. This implies that
$$
\lim_{n \to \infty} \frac{\ln ||A_n||_\infty}{n}=\lim_{n \to \infty} \frac{\ln ||A_n||}{n}=\Lambda_0,
$$
and due to (\ref{lims1}) for the corresponding path $\gamma=\gamma(x)$ we have
$
\Lambda(x)=\Lambda_0.
$
\end{proof}

Let us extend $\Lambda$ to negative $x$ by $\Lambda(-x)=\Lambda(x)$ and define $\Lambda(\infty)=0.$

\begin{Corollary}
The function $\Lambda(x), \,\, x \in \mathbb RP^1$ is $GL_2(\mathbb Z)$-invariant:
$$
\Lambda\left(\frac{ax+b}{cx+d}\right)=\Lambda(x), \quad x \in \mathbb{R}P^1
$$
for all integer $a,b,c,d$, satisfying $ad-bc=\pm 1.$
\end{Corollary}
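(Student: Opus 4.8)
The plan is to deduce $GL_2(\mathbb Z)$-invariance from invariance under three generating transformations of $\mathbb RP^1$: $\sigma\colon x\mapsto -x$, $\iota\colon x\mapsto 1/x$ and $\tau\colon x\mapsto x+1$. These generate the group of all M\"obius maps $x\mapsto\frac{ax+b}{cx+d}$ with integer entries and $ad-bc=\pm1$, since $SL_2(\mathbb Z)$ is generated by $T\colon x\mapsto x+1$ and $S\colon x\mapsto -1/x$ (a composition of $\iota$ and $\sigma$), while $\sigma$, of determinant $-1$, supplies the remaining $\mathbb Z/2$. Invariance under $\sigma$ is nothing but the definition of the extension $\Lambda(-x):=\Lambda(x)$. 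Invariance under $\iota$ on $(0,\infty)$ is exactly the symmetry already exploited in the proof of Proposition~\ref{spect}: the two halves of the positive Farey tree are interchanged by $x\mapsto1/x$ together with the conjugation $A\mapsto S^{-1}AS$, which preserves $\rho$; combined with $\Lambda(1/0)=\Lambda(\infty)=0=\Lambda(0)$ and with $\sigma$-invariance this gives $\Lambda\circ\iota=\Lambda$ on all of $\mathbb RP^1$. So the only genuine work lies in $\Lambda\circ\tau=\Lambda$.

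First I would prove $\Lambda(x+1)=\Lambda(x)$ for $x>0$. Recall the standard Farey/Stern--Brocot dictionary (cf.~\cite{Hatcher}): for irrational $x\ge0$ the path $\gamma(x)$ is the unique infinite path in the positive Farey tree whose matrices $A_n(x)=X_1\cdots X_n$ satisfy $\bigcap_n A_n(x)\cdot[0,+\infty]=\{x\}$, with $L\cdot[0,+\infty]=[1,+\infty]$ and $R\cdot[0,+\infty]=[0,1]$. Since $x>0$ forces $x+1>1$, the path $\gamma(x+1)$ begins with the edge $L$, and the subtree below it corresponds, after dropping this edge, to the whole positive Farey tree via $y\mapsto L^{-1}y=y-1$; hence deleting that first edge turns $\gamma(x+1)$ into $\gamma(x)$, i.e.\ $A_{n+1}(x+1)=L\,A_n(x)$ for all $n$ (with $A_0:=I$). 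Applying the Lemma with $B=L\in SL_2(\mathbb N)$ — which, as its proof shows, holds for an arbitrary $L/R$-word, the normalisation $a_n<c_n$ there being inessential — we get $\limsup_n\frac{\ln\rho(LA_n(x))}{n}=\limsup_n\frac{\ln\rho(A_n(x))}{n}=\Lambda(x)$; since passing from the normalisation by $n$ to the one by $n+1$ multiplies $\frac{\ln\rho(\cdot)}{n}$ by $\frac{n}{n+1}\to1$ while the quantities stay bounded (as $\rho(LA_n(x))\le\|L\|\cdot\|X_1\|\cdots\|X_n\|=\varphi^{n+1}$), this yields $\Lambda(x+1)=\Lambda(x)$. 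The values $x=0$, $x=\infty$ and rational $x$ need no argument, as $\Lambda$ vanishes there and $\tau$ preserves $\mathbb Q\cup\{\infty\}$.

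To extend $\Lambda\circ\tau=\Lambda$ to non-positive $x$ I would first record the identity $\Lambda(1-u)=\Lambda(u)$ for $u\in(0,1)$, obtained by chaining the invariances already established:
\[\Lambda(1-u)=\Lambda(\tfrac1{1-u})=\Lambda(\tfrac{u}{1-u})=\Lambda(\tfrac{1-u}{u})=\Lambda(\tfrac1u)=\Lambda(u),\]
where the steps alternate $\iota$-invariance and $\tau$-invariance, each $\tau$-step removing a $+1$ — from $\tfrac1{1-u}$, then from $\tfrac1u$ — and landing at a non-negative argument, where $\tau$-invariance has just been proved. Granting this, for $x\in(-1,0)$ we get $\Lambda(x+1)=\Lambda(1-(-x))=\Lambda(-x)=\Lambda(x)$, and for $x\le-1$ we get $\Lambda(x+1)=\Lambda(-x-1)=\Lambda(-x)=\Lambda(x)$ by $\sigma$-invariance together with $\tau$-invariance at $-x-1\ge0$; the remaining points $x=-1$, $x=\infty$ are trivial. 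This establishes $\Lambda\circ\tau=\Lambda$ on all of $\mathbb RP^1$, and, writing any element of $GL_2(\mathbb Z)$ as a word in $\sigma,\iota,\tau^{\pm1}$ and applying the three invariances in turn, proves the Corollary.

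The step I expect to be the main obstacle is the combinatorial identity $A_{n+1}(x+1)=L\,A_n(x)$, i.e.\ pinning down precisely how the Farey path responds to $x\mapsto x+1$; once this is in place, everything else is routine bookkeeping with the Lemma and the generating set. An alternative route would be to invoke Serret's theorem — two irrationals are $GL_2(\mathbb Z)$-equivalent precisely when their continued fractions eventually coincide — together with the observation, immediate from the Lemma, that $\Lambda(x)$ depends only on the tail of the turn sequence $(X_n)$; I would mention this only as a remark and give the generator-by-generator argument as the actual proof, since it relies solely on results proved above.
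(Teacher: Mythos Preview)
Your proof is correct, but it takes a genuinely different route from the paper. The paper's argument is precisely the one you sketch at the end as an ``alternative route'': it simply invokes Serret's theorem (two irrationals are $GL_2(\mathbb Z)$-equivalent iff their continued fraction expansions eventually coincide), observes that this means the paths $\gamma_x$ and $\gamma_y$ share the same tail of $L/R$-turns, and then applies Lemma~1 to conclude $\Lambda(x)=\Lambda(y)$. That is the entire proof in the paper.

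Your generator-by-generator argument via $\sigma,\iota,\tau$ is longer but more self-contained: it avoids citing Serret's theorem and instead reduces everything to the tree symmetry $x\mapsto1/x$ (already noted in the proof of Proposition~1), the single-edge shift $A_{n+1}(x+1)=L\,A_n(x)$, and Lemma~1 with $B=L$. The chain $\Lambda(1-u)=\Lambda(u)$ you use to extend $\tau$-invariance to negative arguments is a nice touch. The trade-off is transparency versus economy: the paper's proof is two sentences because Serret's theorem packages exactly the combinatorial fact you unpack by hand, while your version makes explicit how each generator acts on the Farey path. One minor point: your parenthetical that the normalisation $a_n<c_n$ in Lemma~1 is ``inessential'' is correct, but since applying $B=L$ lands you in the left half of the tree you should say explicitly that one either swaps the roles of rows (using $\max(a_n,b_n)$ in place of $\max(c_n,d_n)$) or conjugates by $S$ to reduce to the case already proved.
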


Indeed, it is well-known that two irrational numbers $x, y \in \mathbb R$ are $GL_2(\mathbb Z)$-equivalent, which means that 
$$
y=\frac{ax+b}{cx+d}, \quad \begin{pmatrix}
  a & b \\
  c & d \\
\end{pmatrix} \in GL_2(\mathbb Z),
$$
if and only if $x$ and $y$ have continued fraction expansions which eventually coincide (see e.g. \cite{LeVeque}).
This implies that the corresponding paths $\gamma_x$ and $\gamma_y$ have eventually the same sequence of left and right turns (see sections 5 and 6 below), and by Lemma 1, relation (\ref{lims1}) and equivalence of the norms have the same Lyapunov exponents.

In particular, $\Lambda(x+1)=\Lambda(x)$ is periodic, so one can consider it only at the segment $[0,1]$.

\begin{Theorem}
The Lyapunov exponent $\Lambda(x)=0$ for almost every $x \in [0,1].$ In particular, for almost every $x$ the limsup in the definition of $\Lambda(x)$ can be replaced by the usual limit.
\end{Theorem}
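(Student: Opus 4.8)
The plan is to use Proposition~\ref{spect} to replace $\rho(A_n(x))$ by the Euclid sequence $w_n(x)$, and then to read the growth of $w_n(x)$ off the continued fraction of $x$ via the ergodic theory of the Gauss map. Throughout I take $x\in(0,1)$ irrational (the rationals and $x=0$ form a null set), write $x=[a_1,a_2,a_3,\dots]$ with partial quotients $a_i\in\mathbb N$, and let $p_k/q_k$ be the $k$-th convergent. By Proposition~\ref{spect} and the two-sided bound $\tfrac{x}{4}w_n<\rho(A_n)<2w_n$ established in its proof for $x>0$, one has $\tfrac{\ln\rho(A_n(x))}{n}=\tfrac{\ln w_n(x)}{n}+O(1/n)$, so it suffices to show that $\tfrac{\ln w_n(x)}{n}\to 0$ for Lebesgue-almost every $x$; once this is known the $\limsup$ in \eqref{defL} is in fact a limit, equal to $0$.

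First I would pin down the dictionary between the path $\gamma(x)$ and the continued fraction. On the relevant half of the tree the matrices along $\gamma(x)$ are $A_n=X_1\cdots X_n$ with $X_i\in\{L,R\}$, and (after a harmless shift of the first block, which does not affect the limit by Lemma~1) the turning sequence consists of alternating blocks of $R$'s and $L$'s of lengths $a_1,a_2,a_3,\dots$; at the block boundary $n_k:=a_1+\dots+a_k$ one has $A_{n_k}=R^{a_1}L^{a_2}R^{a_3}\cdots$ up to that point. A direct computation with $R=\left(\begin{smallmatrix}1&0\\1&1\end{smallmatrix}\right)$, $L=\left(\begin{smallmatrix}1&1\\0&1\end{smallmatrix}\right)$ shows that the bottom-row entries of $A_{n_k}$ are, up to an index shift, exactly $q_k$ and $q_{k-1}$, so $w_{n_k}=q_k$. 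Moreover, for either choice of $X_{n+1}$ the new bottom row of $A_{n+1}=A_nX_{n+1}$ is $(c_n+d_n,\,d_n)$ or $(c_n,\,c_n+d_n)$, hence $w_{n+1}=c_n+d_n\ge w_n$ and $w_n$ is nondecreasing. Therefore for $n_k\le n\le n_{k+1}$ we get $q_k\le w_n\le q_{k+1}$, and so
\[
\frac{\ln w_n(x)}{n}\ \le\ \frac{\ln q_{k+1}}{n_k}\qquad(n_k\le n\le n_{k+1}).
\]

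It remains to bound the right-hand side. From $q_k=a_kq_{k-1}+q_{k-2}\le(a_k+1)q_{k-1}$ one gets $q_{k+1}\le\prod_{i=1}^{k+1}(a_i+1)$, so $\ln q_{k+1}\le\sum_{i=1}^{k}\ln(a_i+1)+\ln(a_{k+1}+1)$. The key input is that for Lebesgue-almost every $x$ the Cesàro means $\tfrac1k n_k=\tfrac1k\sum_{i=1}^{k}a_i$ tend to $\infty$; this is classical and follows from the ergodicity of the Gauss map together with the non-integrability of $a_1$ with respect to the Gauss measure (apply the ergodic theorem to $\min(a_1,N)$ and let $N\to\infty$; see \cite{Khin}). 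Granting $k/n_k\to 0$ a.e., split $\sum_{i\le k}\ln(a_i+1)$ according to $a_i\le M$ or $a_i>M$: the first group contributes at most $k\ln(M+1)=o(n_k)$, and the second at most $\varepsilon_M\sum_{i\le k}a_i\le\varepsilon_M n_k$ with $\varepsilon_M:=\sup_{a>M}\tfrac{\ln(a+1)}{a}\to 0$; letting $k\to\infty$ and then $M\to\infty$ gives $\tfrac1{n_k}\sum_{i\le k}\ln(a_i+1)\to 0$. Finally $\tfrac{\ln(a_{k+1}+1)}{n_k}\to 0$ a.e. as well, since by Borel--Cantelli $a_{k+1}\le k^2$ eventually (the Gauss measure of $\{a_{k+1}>k^2\}$ is $\asymp k^{-2}$, summable) while $n_k\ge k$. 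Hence $\tfrac{\ln q_{k+1}}{n_k}\to 0$, and therefore $\tfrac{\ln w_n(x)}{n}\to 0$ along the whole sequence, for a.e. $x$.

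Combining this with Proposition~\ref{spect} gives $\tfrac{\ln\rho(A_n(x))}{n}\to 0$, so $\Lambda(x)$ exists as an honest limit and equals $0$ for almost every $x\in[0,1]$. The main obstacle here is not any single hard estimate but rather making the tree--continued fraction correspondence precise (the $a_0=0$ convention forces an off-by-one shift in the first block and in the identification $w_{n_k}=q_k$) and invoking the ergodic theorem in the non-integrable regime correctly; once $n_k/k\to\infty$ a.e.\ is secured, the remainder is the elementary splitting argument above together with the monotonicity $w_{n+1}=c_n+d_n\ge w_n$, which also disposes of the intermediate depths $n\notin\{n_k\}$.
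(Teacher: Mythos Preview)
Your argument is correct and follows the same overall strategy as the paper: translate $\Lambda(x)$ into $\limsup_k \frac{\ln q_k}{s_k}$ via the continued-fraction dictionary (your $n_k$ is the paper's $s_k=a_1+\dots+a_k$), and then exploit the classical fact that $s_k/k\to\infty$ for Lebesgue-a.e.\ $x$. The difference is in how the numerator is controlled. The paper simply writes $\frac{\ln q_k}{s_k}=\frac{\ln q_k}{k}\cdot\frac{k}{s_k}$ and invokes L\'evy's theorem, which gives $\frac{\ln q_k}{k}\to\frac{\pi^2}{12\ln 2}$ a.e., so the product tends to~$0$; this is a one-line finish once the dictionary is in place. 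You instead avoid L\'evy entirely, using the elementary bound $q_{k+1}\le\prod_{i\le k+1}(a_i+1)$ together with a truncation $a_i\le M$ vs.\ $a_i>M$ and a separate Borel--Cantelli step for the dangling term $\ln(a_{k+1}+1)$. Your route is longer but more self-contained, needing only $s_k/k\to\infty$ and the summability of the Gauss measure of $\{a_1>k^2\}$. You are also more careful than the paper about the intermediate depths $n\notin\{n_k\}$, which you dispose of with the monotonicity $w_{n+1}=c_n+d_n\ge w_n$; the paper's proof glosses over this passage from the tree index~$n$ to the convergent index~$k$.
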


\begin{proof}
For rational $x$ we have $\Lambda(x)=0$, so assume that $x$ is irrational. Let $x = [a_1,a_2,\ldots]$ be its expansion as a continued fraction, $\frac{p_n(x)}{q_n(x)}=[a_1,a_2,\ldots,a_n]$ be the $n$-th convergent and $s_n(x)=a_1+\dots+a_n.$
Then from (\ref{defeucl}) and the Farey tree interpretation of the continued fraction expansions we have
\beq{defeucl4}
\Lambda(x)=\limsup_{n\to\infty}\frac{\ln q_n(x)}{s_n(x)}=\limsup_{n\to\infty}\frac{\ln q_n(x)}{n} \frac{n}{s_n(x)}.
\eeq
But by the classical result of Paul L\'evy \cite{Levy} for almost all $x$
\beq{KX}
\lim_{n\to\infty}\frac{\ln q_n(x)}{n} =\frac{\pi^2}{12 \ln 2}.
\eeq
Now the claim follows from the known fact that for almost every $x$ 
\beq{KX2}
\lim_{n\to\infty}\frac{s_n(x)}{n}=\infty
\eeq 
(see \cite{Corn}, Th. 4 in Ch. 7, Section 4).
\end{proof}

An interesting question is to study more the set 
$$supp(\Lambda)=\{x \in \mathbb R: \Lambda(x)\neq 0\},$$
and, in particular, to find its Hausdorff dimension (cf. e.g. \cite{Hensley}).\footnote{From the results of Jarnik \cite{Jarnik} it follows that the Hausdorff dimension of the support of $\Lambda$ equals 1. We are grateful to Michael Magee, who explained this to us.}

For the quadratic irrationals the values of $\Lambda$ can be described  explicitly.
Let $x=[a_1,\dots, a_k, \overline{b_1, b_2, \dots, b_{2n}}]$ be the continued fraction expansion of a quadratic irrational $x$, which is known 
after Lagrange to be periodic. We assume that the length of the period is even by doubling it if necessary. 

Define the matrix $B(x) \in SL_2(\mathbb N)$ as the product
$$
B=R^{b_1}L^{b_2}\dots R^{b_{2n-1}}L^{b_{2n}}=\begin{pmatrix}
  1 & 0 \\
  b_1 & 1 \\
\end{pmatrix}
\begin{pmatrix}
  1 & b_2 \\
  0 & 1 \\
\end{pmatrix}
\dots
\begin{pmatrix}
  1 & 0 \\
  b_{2n-1} & 1 \\
\end{pmatrix}
\begin{pmatrix}
  1 & b_{2n} \\
  0 & 1 \\
\end{pmatrix}.
$$
Let $\tau=tr\, B(x)$ be the trace and $$\lambda(x)=\frac{\tau+\sqrt{\tau^2-4}}{2}$$ be the largest eigenvalue (or spectral radius) of $B(x).$ 

\begin{prop} 
\label{spect}
The Lyapunov exponent of the quadratic irrational $$x=[a_1,\dots, a_k, \overline{b_1, b_2, \dots, b_{2n}}]$$ can be described explicitly as
\beq{defeucl3}
\Lambda(x)=\frac{\ln \lambda(x)}{s(x)},
\eeq
where $s(x)=b_1+\dots+b_{2n}.$
\end{prop}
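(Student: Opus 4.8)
The plan is to reduce the computation to a purely periodic path and then exploit the identity $\rho(B^{j})=\rho(B)^{j}$. We may assume $x\in(0,1)$, since replacing $x$ by $x+1$, by $-x$ or by $1/x$ changes neither $\Lambda(x)$ (by $GL_2(\mathbb Z)$-invariance) nor the quantity $\tfrac{\ln\lambda(x)}{s(x)}$. Recall the dictionary between continued fractions and paths on the Farey tree (developed in Sections 5 and 6 below): the $L/R$-word of $\gamma(x)$ for $x=[a_1,\dots,a_k,\overline{b_1,\dots,b_{2n}}]$ is $R^{a_1}L^{a_2}R^{a_3}\cdots$ followed by the infinite repetition of the block $W=R^{b_1}L^{b_2}\cdots R^{b_{2n-1}}L^{b_{2n}}$ when $k$ is even, and by the repetition of the block with the roles of $L$ and $R$ interchanged when $k$ is odd. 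Since $S=\begin{pmatrix}0&1\\1&0\end{pmatrix}$ satisfies $SR^{b}S=L^{b}$, the matrix of that interchanged block is $SB(x)S$, which has the same spectral radius $\lambda(x)=\rho(B(x))$; and the block always has length $s(x)=b_1+\dots+b_{2n}$. Finally, $B(x)$ is a product of $L$'s and $R$'s containing at least one of each, hence a matrix in $SL_2(\mathbb N)$ with all four entries positive integers and determinant $1$, which forces $\operatorname{tr}B(x)\ge 3>2$; so $B(x)$ is hyperbolic and $\lambda(x)>1$ really is its spectral radius.

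By Lemma 1, prepending a fixed matrix of $SL_2(\mathbb N)$ does not affect $\limsup_{m}\tfrac1m\ln\rho(\cdot)$, so we may discard the pre-periodic block and compute $\Lambda(x)$ for the purely periodic path whose one-period matrix $B$ has $\rho(B)=\lambda(x)$ and whose period has length $s=s(x)$. For that path $A_{js}(x)=B^{j}$, so $\rho(A_{js})=\lambda(x)^{j}$ and $\tfrac1{js}\ln\rho(A_{js})=\tfrac{\ln\lambda(x)}{s}$ for every $j$; taking $\limsup$ over the subsequence $m=js$ yields the lower bound $\Lambda(x)\ge\tfrac{\ln\lambda(x)}{s(x)}$.

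For the upper bound write $m=js+r$ with $0\le r<s$, so $A_m=B^{j}C_r$, where $C_r$ ranges over the finite set of length-$r$ prefixes of the period. Then $\rho(A_m)\le\|A_m\|\le\|B^{j}\|\,\|C_r\|\le M\,\|B^{j}\|$ with $M=\max_{0\le r<s}\|C_r\|$, and since $B$ is hyperbolic, hence diagonalisable with eigenvalues $\lambda(x)^{\pm1}$, we have $\|B^{j}\|\le C_B\,\lambda(x)^{j}$ for a constant $C_B$ depending only on $B$ (equivalently, $\tfrac1j\ln\|B^{j}\|\to\ln\rho(B)$ by Gelfand's formula, see e.g. \cite{Lax}). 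Hence $\tfrac1m\ln\rho(A_m)\le\tfrac{\ln C_B+\ln M+j\ln\lambda(x)}{js+r}\longrightarrow\tfrac{\ln\lambda(x)}{s}$ as $m\to\infty$, so $\limsup_{m}\tfrac1m\ln\rho(A_m)\le\tfrac{\ln\lambda(x)}{s(x)}$. Combined with the lower bound this gives $\Lambda(x)=\tfrac{\ln\lambda(x)}{s(x)}$.

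The only step requiring care is the first paragraph, namely pinning down the periodic block of $\gamma(x)$ as exactly $W=R^{b_1}L^{b_2}\cdots L^{b_{2n}}$ up to the $L\leftrightarrow R$ swap dictated by the parity of $k$; this relies on the continued-fraction/path correspondence established later in the paper. Everything after that is elementary, and the argument in fact shows that the defining $\limsup$ is realised along the arithmetic progression $m=js$, which is all that the definition of $\Lambda$ requires.
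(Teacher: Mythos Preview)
Your argument is correct and is precisely the intended one: the paper's own proof is the single sentence ``The proof follows easily from the results of this section,'' and you have supplied those details---invoking Lemma~1 to strip the pre-period, identifying the one-period matrix with $B(x)$ up to conjugacy (cyclic shift or the $L\leftrightarrow R$ swap), and handling the off-multiple indices by the norm estimate. The only cosmetic point is that your lower bound via the subsequence $m=js$ and upper bound via Gelfand/diagonalisability together actually show the limit (not just the $\limsup$) exists and equals $\tfrac{\ln\lambda(x)}{s(x)}$, which you note at the end.
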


The proof follows easily from the results of this section. 

In particular, we have for $x=\sqrt{2}, \sqrt{3}, \sqrt{5}$ the periods $\overline{2,2}$, $\overline{1,2}$, $\overline{4,4}$ respectively, so
$$
\Lambda(\sqrt{2})=\frac{1}{4} \ln (3+2\sqrt{2}), \,\, \Lambda(\sqrt{3})=\frac{1}{3} \ln (2+\sqrt{3}), \,\, \Lambda(\sqrt{5})=\frac{1}{8} \ln (9+4\sqrt{5}).
$$

\section{Markov forms and the Cohn tree}

Before we proceed to the most irrational numbers let us introduce the notion of the Markov binary quadratic form \cite{Markov}.

Let $(k,l,m)$ be a Markov triple:
$$
k^2+l^2+m^2=3klm
$$
with $m$ being the largest number.

The {\it Markov form} $f_m(x,y)$ associated to this Markov triple has the form
\beq{mform}
f_m(x,y)=mx^2+(3m-2p)xy+(q-3p)y^2,
\eeq
where
\beq{pq}
p:= \min \{x : lx \equiv \pm k \text{ }(\text{mod } m) \}, \quad
q:= \frac{1}{m} (p^2 + 1).
\eeq

This is an indefinite binary quadratic form with the discriminant
$$\Delta(f_m)=9m^2-4$$
and with
$
m(f_m)=m,
$
where by definition
$$
m(f):=\min_{(x,y) \in \mathbb Z^2\setminus (0,0)} |f(x,y)|.
$$
Markov studied the possible values of the ratio
$$
M_f=\frac{m(f)}{\sqrt{\Delta(f)}}
$$
for the indefinite integral binary forms and showed that all possible values $M=M_f$ larger than $1/3$ are given by
$$
M=\frac{m}{\sqrt{9m^2-4}},
$$
where $m$ is a Markov number, and realised by the Markov forms (see \cite{Delone}).

The corresponding positive roots $x=\alpha_m$ of $f_m(x,1)=0$ give the most irrational numbers, which we will discuss in the next section.
They have the continued fraction expansion 
$$
\alpha_m = [\overline{a_1, \ldots, a_{2n}}]
$$
with the following properties (Markov \cite{Markov}, Frobenius \cite{Frobenius}; see also Cusick-Flahive \cite{Cuhive}, Ch. 2 Th.3 ):
$$
m=K(a_1, \ldots, a_{2n-1}), \\
p=K(a_2, \ldots, a_{2n-1}), \\
q=K(a_2, \ldots, a_{2n-3}),
$$
where $K(s_1, \ldots, s_n)$ is the continuant, which is the numerator of the continued fraction $[s_1, \ldots, s_n]$. We also have
$$
a_1=a_{2n}=2, \,\, a_{2n-2}=a_{2n-1}=1,
$$
and the sequence $a_2,\ldots,a_{2n-3}$ is palindromic.

We would like to explain now the connection of the Markov forms with the following ``quantum version" of the Euclid tree, known as the {\it Cohn tree.}

Cohn \cite{Cohn} proposed to replace the addition of integer numbers in $u+v=w$ by multiplication of matrices in $SL_2(\mathbb N)$, so the triples on Cohn tree are $(A,B,C)$ with $C=AB$ with initial matrices 
\begin{equation}
\label{initial}
A=\left(\begin{array}{cc} 1 & 1 \\ 1 & 2
\end{array}\right), \quad B=\left(\begin{array}{cc} 3 & 4 \\ 2 & 3
\end{array}\right)
\end{equation}
(see Fig. 6). The relation between Cohn and Markov trees are given simply by the ``trace map"
$$
C \rightarrow m = \frac{1}{3}tr \, C.
$$

\begin{figure}[h]
\label{fig:CFTree}
\begin{center}
\includegraphics[trim = 0mm 30mm 0mm 22mm, clip, height=38mm]{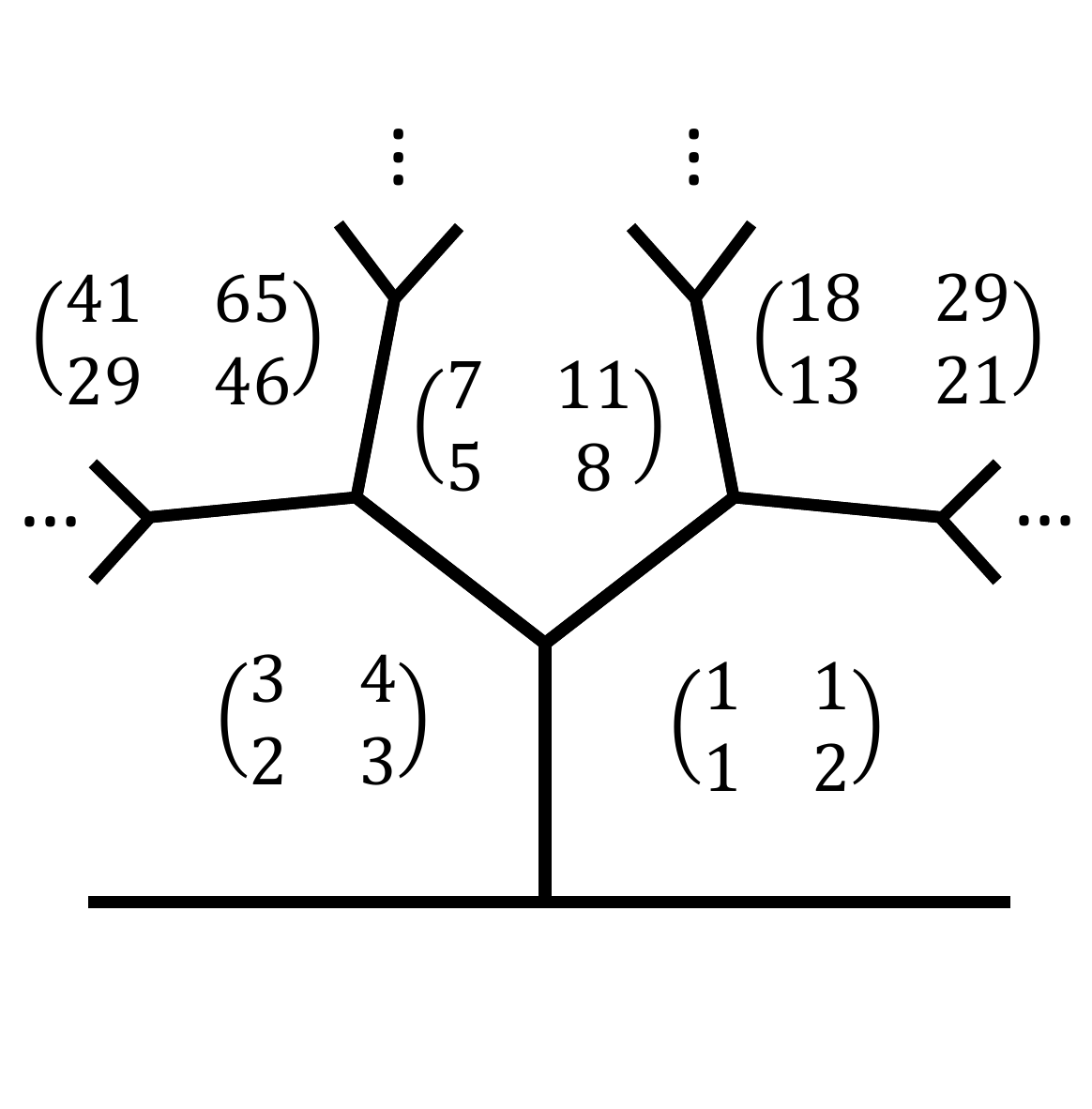}  \hspace{8pt} \quad  \includegraphics[trim = 0mm 30mm 0mm 22mm, clip, height=38mm]{MarkovTree2}
\caption{\small Cohn and Markov trees related by trace map}
\end{center}
\end{figure}

To state the relation with Markov forms we need to recall a standard relation between matrices from $SL_2(\mathbb Z)$ and integral binary quadratic forms (see e.g. \cite{LeVeque}).

Let $A=\left(\begin{array}{cc} a & b \\ c & d
\end{array}\right) \in SL(2,{\mathbb Z})$ be a hyperbolic matrix from $SL_2(\mathbb Z)$. Consider $A$ as the automorphism of the lattice
${\mathcal L} = {\mathbb Z} \oplus {\mathbb Z} \subset {\mathbb R}^2$ by choosing
some basis $e_1, e_2$ in this lattice. Then we can define the following integral binary
quadratic form $Q_A$ by the formula
\begin{equation}
\label{QA} 
{\bf v}\wedge A{\bf v} = Q_A ({\bf v}) e_1 \wedge e_2,
\end{equation}
where ${\bf v}$ is a vector from ${\mathbb R}^2.$ Explicitly if ${\bf
v} = x e_1 + y e_2$ then
\begin{equation}
\label{formQ}
Q_A(x,y) = 
\det\left(\begin{array}{cc} 
 x & a x+b y
\\ y & c x + d y \end{array}\right) = c x^2 + (d-a) xy - b y^2.
\end{equation}
The main property of this form (easily seen from the definition) is that this form is invariant
under the action of $A$:
$$
Q_A(A{\bf v}) = Q_A({\bf v}).
$$

Note that the discriminant of $Q_A$ is
$$
D = (d-a)^2 +
4bc = (a+d)^2 - 4(ad-bc)=
(a+d)^2 -4,
$$
which is exactly the discriminant of the
characteristic equation of $A$:
$$
\lambda^2 - (a+d)
\lambda + 1 = 0.
$$
In particular, since $A$ is hyperbolic the form
$Q_A$ is indefinite. 

The following theorem (which seems to be new) gives a direct link between Cohn matrices and Markov forms.

\begin{Theorem}
Let $A_m$ be the matrix from Cohn tree corresponding to Markov number $m.$ 
Then Markov form $f_m(x,y)$ can be written as
\beq{cohnform}
f_m(x,y)=Q_{m}(x+y,y)
\eeq
where $Q_m=Q_{A_m}$ is the binary form (\ref{formQ}) corresponding to $A_m$.
\end{Theorem}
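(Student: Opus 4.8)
\emph{Proof plan.} The plan is to reduce \eqref{cohnform} to an explicit description of the entries of the Cohn matrix $A_m$ and then to establish that description along the Cohn tree. Write $A_m=\begin{pmatrix}a&b\\c&d\end{pmatrix}$. Expanding \eqref{formQ} gives
\[
Q_{A_m}(x+y,y)=c\,x^{2}+(2c+d-a)\,xy+(c+d-a-b)\,y^{2},
\]
and comparing with the Markov form \eqref{mform} shows that \eqref{cohnform} is equivalent to the three identities $c=m$, $d-a=m-2p$, $b=2m+p-q$, i.e.\ to
\[
A_m=\begin{pmatrix} m+p & 2m+p-q\\ m & 2m-p\end{pmatrix}.
\]
These are consistent: the determinant of the right-hand side is $mq-p^{2}=1$ by \eqref{pq}, and the trace is $3m$, as required for the Cohn matrix. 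So the whole content is the identification of the Cohn-matrix entries in terms of the Markov-form data $p,q$.

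Equivalently, since the fixed points of $A_m$ on $\mathbb{R}P^{1}$ are the roots of $Q_{A_m}(x,1)=0$, the theorem is the conjunction of (i) the lower-left entry of $A_m$ equals $m$, and (ii) $\alpha_m+1$ is a fixed point of $A_m$, where $\alpha_m$ is the positive root of $f_m(x,1)=0$: granting these, the integral form $g(x,y):=Q_{A_m}(x+y,y)$ has leading coefficient $m$ and vanishes at $(\alpha_m,1)$ since $g(\alpha_m,1)=Q_{A_m}(\alpha_m+1,1)=0$, and two integral binary quadratic forms with the same leading coefficient sharing an irrational root coincide (their difference, being linear in $y/x$ and vanishing at an irrational point, is zero). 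This also explains why the shift $x\mapsto x+y$, i.e.\ conjugation of $A_m$ by $L=\begin{pmatrix}1&1\\0&1\end{pmatrix}$, is the correct and in fact forced normalisation: it carries the fixed point $\alpha_m+1$ of $A_m$ to the root $\alpha_m$ of the Markov form.

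It remains to prove (i)--(ii), or equivalently the matrix identity above. I would argue by induction along the Cohn tree: the planar rule $C=AB$ writes $A_m$ as the ordered product $A_lA_k$ of the Cohn matrices of the two smaller neighbouring Markov numbers $k<l$, with $m+m''=3kl$ for the replaced neighbour $m''$. Feeding in the inductive forms of $A_k,A_l$, the trace of the product comes out to $3m$ from the Fricke identity $\operatorname{tr}(A_kA_l)+\operatorname{tr}(A_kA_l^{-1})=\operatorname{tr}A_k\operatorname{tr}A_l$ together with the Markov equation, while its lower-left entry is $3kl$ plus a term bilinear in $p_k,p_l$, and matching this with $m=3kl-m''$ reduces to the arithmetic identity $k\,p_l-l\,p_k=\pm m''$ (with a companion relation for $q$) --- the classical Frobenius/Cohn relation between $p,q$ at a node and at its neighbours, which is carried along in the same induction. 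The base cases are the two initial matrices $\begin{pmatrix}1&1\\1&2\end{pmatrix}$ and $\begin{pmatrix}3&4\\2&3\end{pmatrix}$, with the mildly degenerate conventions for $p,q$ at $m=1,2$. Alternatively, one can avoid the induction by using Markov's continued-fraction description --- $\alpha_m=[\overline{a_1,\dots,a_{2n}}]$ with $a_1=a_{2n}=2$, $a_{2n-1}=a_{2n-2}=1$, palindromic interior, and $m=K(a_1,\dots,a_{2n-1})$, $p=K(a_2,\dots,a_{2n-1})$, $q=K(a_2,\dots,a_{2n-3})$ --- together with the continuant matrix identity to show $L^{-1}A_mL=R^{a_1}L^{a_2}\cdots R^{a_{2n-1}}L^{a_{2n}}$, which the boundary quotients collapse to $\begin{pmatrix} p & 3p-q\\ m & 3m-p\end{pmatrix}$; then $Q_{A_m}(x+y,y)=Q_{L^{-1}A_mL}(x,y)$ by the wedge-product change of variables for $Q_A$, and \eqref{formQ} returns $f_m$.

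On either route the one real obstacle is the same: making precise the dictionary between the recursive planar structure of the Cohn tree and the combinatorics of the continued-fraction periods of the Markov irrationals --- that is, pinning down which matrix of trace $3m$ in its $SL_2(\mathbb{N})$-conjugacy class is the Cohn matrix $A_m$, equivalently proving the Frobenius relations for $p$ and $q$ along the tree. The coefficient comparison, the trace computation and the final substitution are all routine.
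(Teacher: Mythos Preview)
Your reduction is exactly the paper's: expand $Q_{A_m}(x+y,y)$ and compare coefficients with $f_m$, so that everything comes down to the explicit form $A_m=\begin{pmatrix} m+p & 2m+p-q\\ m & 2m-p\end{pmatrix}$ of the Cohn matrix. The only difference is that the paper does not prove this matrix identity but simply cites it from Aigner \cite{Aigner} (his Theorem~4.13), whereas you sketch two self-contained routes to it --- the tree induction via the Frobenius relations and the continuant argument --- which is more work but makes the proof independent of that reference.
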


\begin{proof}
We use the results of Aigner \cite{Aigner}, who showed that Cohn matrix $A_m$ has the form
\beq{aigner}
A_m=\begin{pmatrix}
  m+p & 2m+p-q \\
  m & 2m-p\\ 
\end{pmatrix},
\eeq
where $p$ and $q$ are the same as in the definition of Markov form (see Thm. 4.13 in \cite{Aigner}, bearing in mind that Aigner's version of Cohn matrices is transposed to ours).

Using (\ref{formQ}) we have
$$
Q_{m}(x,y)=mx^2+(m-2p)xy -(2m+p-q)y^2,
$$
$$
Q_{m}(x+y,y)=m(x+y)^2+(m-2p)(x+y)y -(2m+p-q)y^2
$$
$$=mx^2+(3m-2p)xy+(q-3p)y^2,
$$
which is exactly the Markov form (\ref{mform}).
\end{proof}

Note that there is also a very deep relation of the Cohn tree with combinatorial group theory and with the automorphisms of free group $\mathbb F_2$, for which we refer to Chapter 6 in Aigner \cite{Aigner}. This is based on a well-known fact that the mapping class groups of a torus and a punctured torus are both isomorphic to $GL_2(\mathbb Z).$

\section{Markov-Hurwitz most irrational numbers}

We start with the definition of the Markov constant, which is considered (after Markov and Hurwitz) as the measure of the irrationality of a number.

The {\it Markov constant} $\mu(\alpha)$ of an irrational number $\alpha$ is defined as the minimal number $c$ such that the inequality 
\begin{equation} \label{eq:markov1}
\left|\alpha-\frac{p}{q}\right|\leq\frac{c}{q^2}
\end{equation}
holds for infinitely many $\frac{p}{q}$.

One can show for $\alpha$ given as an infinite continued fraction $\alpha = [a_1,a_2,\ldots]$ the Markov constant can be computed as
\beq{formula}
\mu(\alpha) = \liminf_{N\to\infty}([0,a_{N+1}, a_{N+2} \ldots]+[a_N,a_{N-1},\ldots,a_1])^{-1}
\eeq
 (see e.g. \cite{Burger}).

%A value with a larger Markov constant is said to be ``more irrational'' than one with a smaller Markov constant.
 
A well-known result of Hurwitz  \cite{Hurwitz} claims that for all irrational $\alpha$ we have
\begin{equation*}
\mu(\alpha)\leq\frac{1}{\sqrt{5}},
\end{equation*}
and $\mu(\alpha)=\frac{1}{\sqrt{5}}$ if and only if $\alpha$ is equivalent to $\frac{1+\sqrt{5}}{2}$. 
In other words, the golden ratio (and its equivalents) are the most irrational numbers.

One can ask the natural question of what happens if we exclude values of $\alpha$ equivalent to $\frac{1 + \sqrt{5}}{2}$ from the consideration.
The answer is that for the remaining numbers
$\mu(\alpha) \leq 1/\sqrt 8$
 (see e.g. \cite{Burger}) and $\mu(\alpha)=1/\sqrt 8$ if and only if $\alpha$ is equivalent to $1+\sqrt{2}=[\overline{2}]$ (the ``silver ratio"). 

One can continue this to derive the ``bronze ratio"
$$
\alpha=[\overline{2,2,1,1}]=\frac{9+\sqrt{221}}{10}, \quad \mu(\alpha)=\frac{5}{\sqrt{221}},
$$
which one might find already puzzling. 

A remarkable theorem of Markov explains the situation with the top irrational numbers, linking this question with Markov equation.

\begin{Theorem}[Markov \cite{Markov}]
All Markov constants $\mu(\alpha)>\frac{1}{3}$ have the form 
$$
\mu = \frac{m}{\sqrt{9m^2 -4}},
$$
where $m \in \mathcal M$ is Markov number.
\end{Theorem}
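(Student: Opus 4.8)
The plan is to reduce the statement, through the continued-fraction formula \eqref{formula} for the Markov constant, to Markov's combinatorial classification of the bi-infinite sequences of partial quotients whose value stays below $3$, and then to read off the constant $m/\sqrt{9m^2-4}$ from the Markov form $f_m$ introduced above.

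\smallskip\noindent\textbf{Step 1: reduction to a bi-infinite sequence.}
Write $\alpha=[a_1,a_2,\dots]$ and, following \eqref{formula}, set
$$
\lambda_N(\alpha)=[0,a_{N+1},a_{N+2},\dots]+[a_N,a_{N-1},\dots,a_1],
$$
so that $\mu(\alpha)^{-1}=\limsup_{N\to\infty}\lambda_N(\alpha)$. Assume $\mu(\alpha)>\tfrac13$, i.e.\ $\limsup_N\lambda_N(\alpha)<3$. Since $\lambda_{n-1}(\alpha)>a_n$, all but finitely many $a_n$ belong to $\{1,2\}$; discarding an initial segment, which changes neither $\mu(\alpha)$ nor the $\limsup$, we may assume $a_n\in\{1,2\}$ for all $n$. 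Pick a subsequence $N_j\to\infty$ with $\lambda_{N_j}(\alpha)\to\mu(\alpha)^{-1}$ along which (after a further subsequence) the shifted sequences $(a_{N_j+i})_{i\in\mathbb Z}$ stabilise entrywise; this produces a bi-infinite sequence $A=(a^\ast_i)_{i\in\mathbb Z}$ over $\{1,2\}$ with
$$
\lambda_i(A):=[0,a^\ast_{i+1},a^\ast_{i+2},\dots]+[a^\ast_i,a^\ast_{i-1},\dots]\ \le\ \mu(\alpha)^{-1}\qquad(i\in\mathbb Z),
$$
with equality at $i=0$ by continuity of continued fractions. Hence $M(A):=\sup_{i\in\mathbb Z}\lambda_i(A)=\mu(\alpha)^{-1}<3$, and it remains to classify the bi-infinite sequences $A$ over $\{1,2\}$ with $M(A)<3$ and to evaluate $M(A)$ for them.

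\smallskip\noindent\textbf{Step 2: the combinatorial core.}
This is Markov's theorem proper, and the step I expect to be the main obstacle. Elementary monotonicity estimates for continued fractions show that $M(A)<3$ forces $A$ to break into the two blocks $11$ and $22$: a block of three equal letters, or an isolated transition between single $1$'s and $2$'s, already makes some $\lambda_i\ge3$. Recoding $11\mapsto0$ and $22\mapsto1$ turns $A$ into a bi-infinite binary word, and a further chain of inequalities shows that this word must be balanced and, in fact, periodic, its period being one of the \emph{Markov words} --- the words produced from $11$ and $22$ by the recursive substitution attached to the vertices of the binary tree of Section~1 (so that a Markov word of even length $2n$ satisfies $a_1=a_{2n}=2$, $a_{2n-2}=a_{2n-1}=1$, with $a_2,\dots,a_{2n-3}$ palindromic). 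Proving this forcing is a long induction along the binary tree: at each vertex one must control the extrema of $\lambda_i$ over a period and exclude every departure from the Markov pattern that would keep $M(A)$ below $3$.

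\smallskip\noindent\textbf{Step 3: evaluation of the constant.}
By Step 2, $A=\overline{(a_1,\dots,a_{2n})}$ for a Markov word, and $[\overline{a_1,\dots,a_{2n}}]$ is the positive root $\alpha_m$ of the Markov form $f_m$ of \eqref{mform}, where $m=K(a_1,\dots,a_{2n-1})$ is the corresponding Markov number. The sequence $A$ is invariant under the hyperbolic matrix $R^{a_1}L^{a_2}\cdots R^{a_{2n-1}}L^{a_{2n}}$, whose trace is $3m$ by the trace map $C\mapsto\tfrac13\operatorname{tr}C$ of the Cohn tree; equivalently, $f_m$ is an automorphic form of discriminant $\Delta(f_m)=9m^2-4$ with $m(f_m)=m$. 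For the periodic sequence $A$ the supremum $M(A)=\sup_i\lambda_i(A)$ is attained over one period, and by the reduction theory of the automorphic form $f_m$ it equals $\sqrt{\Delta(f_m)}/m(f_m)$. Therefore
$$
\mu(\alpha)^{-1}=M(A)=\frac{\sqrt{9m^2-4}}{m},\qquad\text{that is,}\qquad\mu(\alpha)=\frac{m}{\sqrt{9m^2-4}}.
$$
Conversely, $\mu(\alpha_m)=m/\sqrt{9m^2-4}>\tfrac13$ for every Markov number $m$, so the list of admissible values is exactly as stated; note that $\alpha$ itself need not be a quadratic irrational, since only the accumulation sequence $A$ is periodic, while $\alpha$ may have arbitrary initial partial quotients.
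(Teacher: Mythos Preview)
The paper does not actually prove this theorem: immediately after stating it, the authors write ``The original Markov result was stated in terms of binary quadratic forms \dots\ For modern proofs we refer to Bombieri \cite{Bombieri} and Cusick and Flahive \cite{Cuhive}.'' So there is no in-paper proof to compare against; the theorem is quoted as a classical result and used as input.

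Your outline is essentially the classical argument one finds in those references (especially Cusick--Flahive, Chapter~2): reduce via the formula \eqref{formula} to bi-infinite sequences over $\{1,2\}$, show by a combinatorial forcing argument that any such sequence with $M(A)<3$ must be periodic with a Markov-word period, and then read off $M(A)=\sqrt{9m^2-4}/m$ from the Markov form $f_m$. The structure is correct, and you are right to flag Step~2 as the substantive obstacle --- your sketch there (blocks $11$/$22$, recoding to a balanced binary word, induction along the tree) names the right ingredients but is far from a proof; the actual combinatorics is lengthy. One small slip in Step~1: from $\lambda_N>a_N$ (not $\lambda_{n-1}>a_n$) you conclude that eventually $a_N\in\{1,2\}$. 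Also, in Step~3 you should be a bit more careful: the identification $M(A)=\sqrt{\Delta(f_m)}/m(f_m)$ is precisely the nontrivial statement that $m(f_m)=m$, which itself requires the reduction theory of Markov forms rather than being an immediate consequence of automorphy.
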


The original Markov result was stated in terms of binary quadratic forms, considered in the previous section. For modern proofs we refer to Bombieri \cite{Bombieri} and Cusick and Flahive \cite{Cuhive}.

To describe the corresponding most irrational numbers we need the following version of the Markov tree.

It is well-known that the most irrational numbers have periodic continued fractions with even periods consisting of 1's and 2's only (see e.g. \cite{Burger}).

Let us define the {\it conjunction} operation of two periods as
\begin{equation}
\label{eq:HurwitzEqn}
[\overline{s_1, \ldots,s_n}] \odot [\overline{t_1, \ldots,t_m}] = [\overline{s_1, \ldots, s_n, t_1, \ldots, t_m}]
\end{equation}
and construct the new tree using this operation and starting with $A=2_2$ and $B=1_2$,
where by $k_n$ we mean the sequence $k,\dots, k$ of numbers $k$ taken $n$ times.

%\begin{figure}[h]
%\centering
%\includegraphics[width=20.2mm]{Basic_Hurwitz}
%\caption{Conjunction operation on the tree.}
%\end{figure}

As a result we have the following {\it Markov-Hurwitz tree} (see Fig.7).
\begin{figure}[h]
\label{fig:CFTree}
\begin{center}
\includegraphics[trim = 0mm 30mm 0mm 22mm, clip, height=38mm]{MarkovTree2}  \hspace{8pt}  \includegraphics[trim = 0mm 30mm 0mm 22mm, clip, height=38mm]{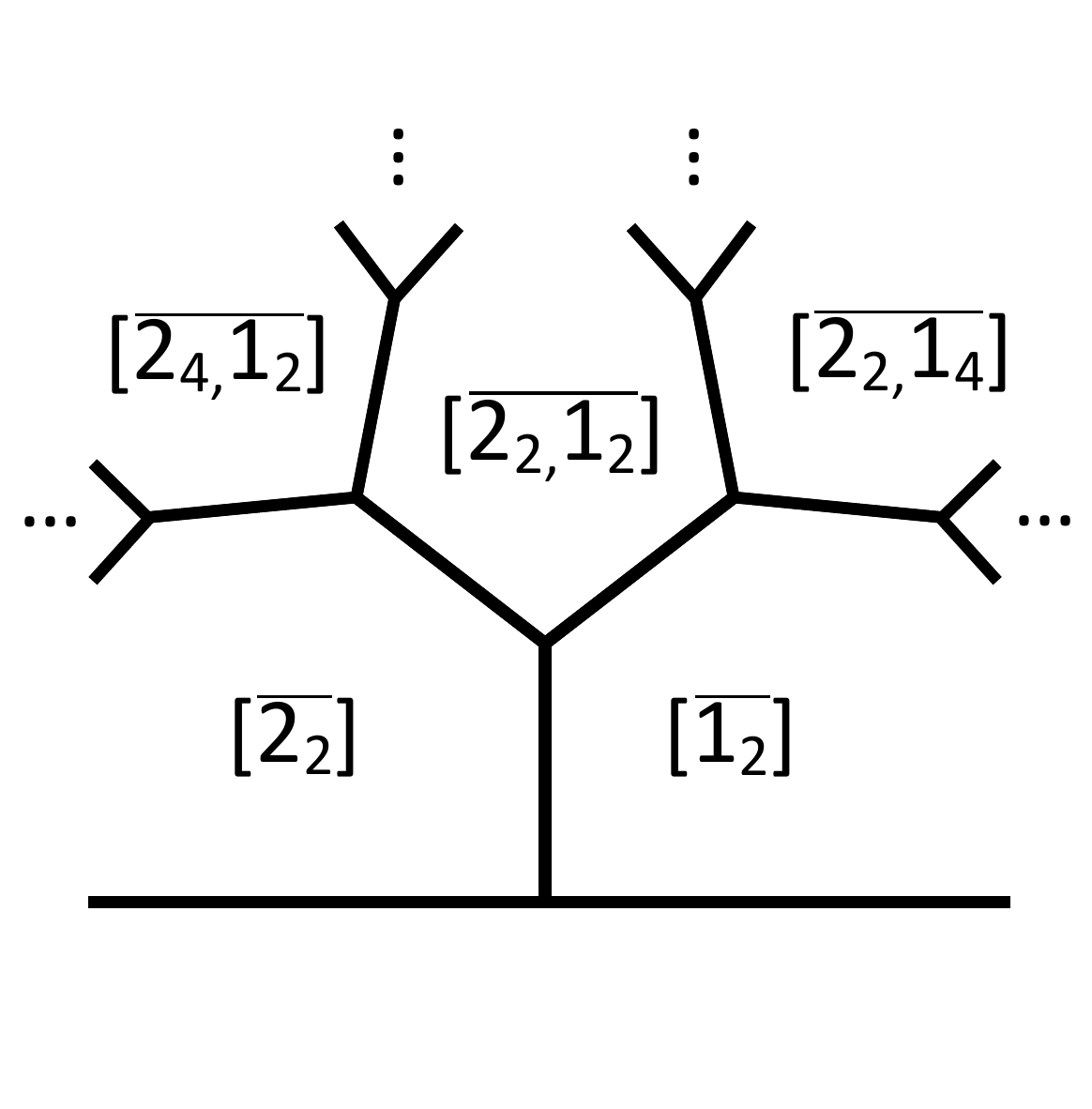}
\caption{\small Markov and Markov-Hurwitz trees}
\end{center}
\end{figure}

Let $y_m$ be the number on Markov-Hurwitz tree corresponding to Markov number $m$.

The following result can be extracted from Cusick and Flahive \cite{Cuhive} (see Lemma 4 in Chapter 2 of \cite{Cuhive}), who made the detailed analysis of the roots $\alpha_m$ of $f_m(x,1)=0$ for Markov forms $f_m(x,y).$

\begin{Theorem} [\cite{Cuhive}] The Markov constant
$$
\mu(y_m) = \frac{m}{\sqrt{9m^2 -4}},
$$
so $y_m$ are representatives of the most irrational numbers.
\end{Theorem}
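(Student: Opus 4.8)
The plan is to show that $y_m$ is $GL_2(\mathbb Z)$-equivalent to the positive root $\alpha_m$ of the Markov form $f_m(x,1)=0$ from Section 3, and then quote Markov's theorem. First I would record that the Markov constant is an invariant of $GL_2(\mathbb Z)$-equivalence: by formula (\ref{formula}) the value $\mu(\alpha)$ depends only on the two-sided tail of the continued fraction of $\alpha$, and two irrationals are $GL_2(\mathbb Z)$-equivalent exactly when their continued fractions eventually agree, so equivalent numbers have equal $\mu$. Markov's theorem together with the analysis of Markov forms recalled in Section 3 (where $m(f_m)=m$ and $\Delta(f_m)=9m^2-4$) gives $\mu(\alpha_m)=m/\sqrt{9m^2-4}$. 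Hence everything reduces to proving $y_m$ and $\alpha_m$ are equivalent; the concluding assertion that the $y_m$ are representatives of the most irrational numbers is then immediate, since $m/\sqrt{9m^2-4}>1/3$ for every Markov number $m$.

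To prove $y_m\sim\alpha_m$ I would pass to period-words: both $y_m$ and $\alpha_m$ are purely periodic with entries in $\{1,2\}$, and it suffices to show that their periods coincide up to a cyclic rotation (after reduction to primitive periods). By the facts recalled in Section 3 (Markov \cite{Markov}, Frobenius \cite{Frobenius}; Cusick--Flahive \cite{Cuhive}, Ch.~2) the period of $\alpha_m$ is a word $w_m$ over $\{1,2\}$ of even length, beginning with $2$, ending with $1,1,2$, with palindromic interior, and with $m=K(a_1,\dots,a_{2n-1})$. The base cases match the roots of the Markov--Hurwitz tree: for $m=1$ one has $\alpha_1=[\overline{1}]=[\overline{1,1}]$ and for $m=2$ one has $\alpha_2=[\overline{2}]=[\overline{2,2}]$, i.e. the words $B=1_2$ and $A=2_2$. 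The inductive content is that a child $m_3=3m_1m_2-m_0$ in the Markov tree corresponds, on the level of these period-words, to concatenation $w_{m_3}=w_{m_1}w_{m_2}$ up to cyclic rotation — which is precisely the conjunction operation $\odot$ of (\ref{eq:HurwitzEqn}) defining the Markov--Hurwitz tree. Since cyclic equivalence is insensitive to the order $w_{m_1}w_{m_2}$ versus $w_{m_2}w_{m_1}$, the choice of left/right convention at each branch is irrelevant. By induction on the tree, $y_m$ and $\alpha_m$ then have cyclically equal periods, hence are equivalent, which completes the proof.

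The step I expect to be the real work is the inductive claim of the previous paragraph, that passing to a child in the Markov tree amounts to concatenating the period-words of the Markov-form roots. This is the combinatorial core of Cusick--Flahive's Chapter~2 analysis (their Lemma~4 and the surrounding discussion of chains of Markov forms and doubly infinite $\{1,2\}$-sequences): carrying it out requires the continuant identities expressing $m_3$ through $w_{m_1}$ and $w_{m_2}$, control of the ``ends in $1,1,2$, palindromic interior'' normal form under concatenation, and a check of which cyclic rotation of $w_{m_3}$ a given concatenation realizes. A tempting alternative is to compute $\mu(y_m)$ directly from (\ref{formula}): for $y_m=[\overline{w}]$ this becomes maximizing $[0,\overline{\sigma^j w}]+[\overline{(\sigma^j w)^{\mathrm{rev}}}]$ over the $|w|$ cyclic shifts $\sigma^j w$, and a single well-chosen shift (at a $1,1\mid 2,2$ junction) yields the upper bound $\mu(y_m)\le m/\sqrt{9m^2-4}$; but the matching lower bound, that no shift does better, again unwinds to the same Markov combinatorics, so this route is not genuinely shorter.
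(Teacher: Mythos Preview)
The paper does not give its own proof of this theorem: it is stated as a citation result, attributed to Cusick--Flahive \cite{Cuhive} (specifically Lemma~4 in Chapter~2), with no argument supplied. So there is no in-paper proof to compare your attempt against. Your outline is a faithful reconstruction of what the cited source does, and you correctly isolate the nontrivial step --- that passage to a child in the Markov tree corresponds, up to cyclic rotation, to concatenation of the period-words of the Markov-form roots --- as the place where the real combinatorial work sits. Your honesty that this step is essentially Cusick--Flahive's Chapter~2 analysis is appropriate; you are not claiming to have bypassed it.

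One observation worth recording: the paper's own Theorem~5 (which \emph{is} proved there) offers a more conceptual packaging of the same content. Since $f_m(x,y)=Q_{A_m}(x+y,y)$, the roots of $f_m(x,1)=0$ are the M\"obius fixed points of the Cohn matrix $A_m$ shifted by $-1$; combined with the Remark following Theorem~7 (that $y_m=\mu_m+1$ with $\mu_m$ the attracting fixed point of $A_m$) this gives $y_m=\alpha_m+2$, hence $GL_2(\mathbb Z)$-equivalence for free. What remains is then to justify the Remark, i.e.\ to match the conjunction rule on the Markov--Hurwitz tree with the product rule on the Cohn tree --- exactly the identification the paper carries out later in Section~7. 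So your period-word induction and this Cohn-matrix route are two faces of the same argument; neither sidesteps the underlying combinatorics, and your sketch is sound.
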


{\bf Remark.}  It follows from the results of \cite{Cuhive} that for $m>1$
\begin{equation}
\label{cfy}
y_m=\mu_m+1=\frac{5c-2d+\sqrt{9c^2-4}}{2c},
\end{equation}
where 
$v_m=(\mu_m,1)$ is the eigenvector with the largest eigenvalue of the corresponding matrix 
$$
A_m=\left(\begin{array}{cc} a & b \\ c & d
\end{array}\right)
$$
from the Cohn tree.

%\section{Most irrational paths and Minkowski $?$-function}

\section{Paths in Farey tree and Minkowski's $?(x)$ Function}

To describe the most irrational paths in Farey tree we will need the following {\it question mark function} introduced by Minkowski \cite{Mink} and denoted by $?(x)$. It was studied later by Denjoy and by Salem (see more history and references in \cite{ViaParadis}) and can be uniquely defined by the following properties:

\begin{itemize}
\item $?(0)=0, \,\, ?(1)=1.$
\item If $\frac{a}{b}$ and $\frac{c}{d}$ are neighbours in a Farey sequence (which means that $|ad-bc|=1$), then the value of question mark function on their mediant is the arithmetic mean of corresponding values:
\beq{deffar}
?\left(\frac{a+c}{b+d}\right)=\frac{1}{2}\left(?\left(\frac{a}{b}\right)+?\left(\frac{c}{d}\right)\right)
\eeq
\item $?$-function is continuous on $[0,1].$
\end{itemize}

One can show that that it has also the following properties (see e.g.\cite{ViaParadis}):

\begin{itemize}
\item $x$ is rational iff $?(x)$ has finite binary representation (dyadic rational)
\item $x$ is a quadratic irrational iff $?(x)$ is rational, but not dyadic rational
\item $?(x)$ is strictly increasing and defines a homeomorphism of $[0,1]$ to itself
\item $?'(x) = 0$ almost everywhere
\end{itemize}

Salem  \cite{Salem} gave a very convenient definition of $?(x)$ in terms of continued fractions.
Namely, if $x$ is given as a continued fraction
\begin{equation*}
x = [a_1, a_2, \dots, a_n, \dots], 
\end{equation*}
then
\begin{equation}
\label{eq:Salemfrac}
?(x) = \frac{1}{2^{a_1 - 1}} - \frac{1}{2^{a_1 + a_2 - 1}} + \frac{1}{2^{a_1 + a_2 + a_3 - 1}} - \dots .
\end{equation}

We claim that Minkowski's function $?(x)$ encodes the path $\gamma$ leading to $x$ on the Farey tree (see Fig.4).

More precisely, let $\gamma_x$ be such a path for $x \in [0,1]$ and define the {\it path function} using binary representation as
\begin{equation}
\label{eq:BinMap}
\pi(x)=\pi(\gamma_x): = [0. \epsilon_{1} \epsilon_{2} \dots \epsilon_{j} \dots]_2, 
\end{equation}
where 
\begin{equation}
\label{eq:binmapdef}
\epsilon_{j} = \left\{ 
  \begin{array}{l l}
    0 & \quad \text{if the $j$th step of $\gamma_x$ is a right-turn;}\\
    1 & \quad \text{if the $j$th step of $\gamma_x$ is a left-turn.}
  \end{array} \right.
\end{equation}

For example, for the path $\gamma$ in Fig. 4 we have
$\pi(\gamma)=[0.1010\dots]_2.$

\begin{Theorem}The path function $\pi(x)$ is nothing other than Minkowski's question mark function.
\end{Theorem}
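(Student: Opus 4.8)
The strategy is to reduce the identity $\pi(x)=?(x)$ to Salem's continued-fraction formula \mref{eq:Salemfrac}, the bridge being the classical Stern--Brocot dictionary between continued fractions and turns in the Farey tree. Concretely, I would first show that for an irrational $x=[a_1,a_2,a_3,\dots]\in(0,1)$ the turn sequence of $\gamma_x$ breaks into consecutive blocks of lengths $a_1-1,\ a_2,\ a_3,\ a_4,\dots$, alternating between the two kinds of turn, with the correspondence in \mref{eq:binmapdef} normalised (equivalently, forced by $\pi(0)=0$) so that the path converging to $0$ carries all $\epsilon_j=0$. This gives
\[
\pi(x)=\Bigl[\,0.\underbrace{0\cdots0}_{a_1-1}\,\underbrace{1\cdots1}_{a_2}\,\underbrace{0\cdots0}_{a_3}\,\underbrace{1\cdots1}_{a_4}\cdots\Bigr]_2 .
\]

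Granting this, the theorem is a one-line computation. In the binary sum $\sum_j\epsilon_j2^{-j}$ the $k$-th block of $1$'s --- the $a_{2k}$ ones occupying positions $a_1+\dots+a_{2k-1},\dots,a_1+\dots+a_{2k}-1$ --- contributes $2^{\,1-(a_1+\dots+a_{2k-1})}-2^{\,1-(a_1+\dots+a_{2k})}$, so
\[
\pi(x)=\frac{1}{2^{a_1-1}}-\frac{1}{2^{a_1+a_2-1}}+\frac{1}{2^{a_1+a_2+a_3-1}}-\cdots ,
\]
which is exactly Salem's expression \mref{eq:Salemfrac} for $?(x)$. Hence $\pi=?$ on the irrationals of $[0,1]$. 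The endpoints are immediate, $\pi(0)=[0.\overline0]_2=0$ and $\pi(1)=[0.\overline1]_2=1$; and for a rational $r$ the two expansions $[a_1,\dots,a_n]$ and $[a_1,\dots,a_n-1,1]$ produce, after the natural padding, the two binary representations of the same dyadic number, on which \mref{eq:Salemfrac} also agrees. Alternatively one observes that $\pi$ and $?$ are both monotone with dense image in $[0,1]$, hence continuous, so agreement on the irrationals is enough.

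To prove the dictionary I would induct on depth, exploiting the self-similarity of the Farey tree: the subtree below any oriented edge is an $SL_2(\mathbb Z)$-copy of the whole tree, and under the identification of the root window with the corresponding Farey sub-interval this change of coordinate realises one step of the continued-fraction algorithm $x\mapsto 1/x-\lfloor 1/x\rfloor$. Starting from the window $[0,1]$ with mediant $\tfrac12$, one has $a_1\ge2\Leftrightarrow x<\tfrac12$, which forces the step toward $0$; after $a_1-1$ such steps the window is $[0,\tfrac1{a_1}]$ with mediant $\tfrac1{a_1+1}$ and $x\in(\tfrac1{a_1+1},\tfrac1{a_1})$, forcing a step away from $0$; and then $x\mapsto\tfrac1x-a_1=[a_2,a_3,\dots]$ is an orientation-reversing element of $GL_2(\mathbb Z)$ carrying the new window onto $[0,1]$, so one repeats with $[a_2,a_3,\dots]$, the reversal of orientation explaining why the two kinds of turn alternate from block to block.

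The main obstacle is precisely this dictionary, and within it the careful bookkeeping of conventions: which planar child counts as ``left'', the off-by-one in the very first block (tied to whether the root of the $[0,1]$-tree is taken to be the edge $\{0,1\}$ or one step below the edge $\{0,\infty\}$ of the full positive Farey tree), and the behaviour at rationals, where $\gamma_r$ is finite and the tree region bordering $r$ meets the boundary. All of this is routine but must be kept consistent; everything past the dictionary is a geometric series together with a density argument. A cleaner route that sidesteps the dictionary is to check directly that $\pi$ satisfies Minkowski's three defining properties --- $\pi(0)=0$, $\pi(1)=1$; the arithmetic-mean relation \mref{deffar} on Farey mediants, which is the combinatorial statement that the turn-string reaching the mediant of a Farey pair is obtained from those reaching the two fractions in the obvious common-refinement manner; and continuity --- and then quote the uniqueness of $?(x)$.
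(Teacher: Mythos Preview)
Your primary route is correct but genuinely different from the paper's. The paper does not go through Salem's formula; instead it verifies directly that $\pi$ satisfies the three defining properties of $?(x)$. The endpoint values are checked, and then the mediant relation \mref{deffar} is established by a two-case combinatorial argument on binary strings: given Farey neighbours $\tfrac{a}{c}<\tfrac{b}{d}$, one of them lies strictly deeper in the tree, and in each case the authors write out the binary expansions of $\pi(\tfrac{a}{c})$, $\pi(\tfrac{b}{d})$ and $\pi(\tfrac{a+b}{c+d})$ explicitly and check the arithmetic-mean identity by hand. Agreement on rationals plus monotonicity then finishes the proof. This is essentially the ``cleaner route'' you sketch in your final paragraph.

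The trade-offs are as you would expect. The paper's argument is local and entirely self-contained: the mediant property is almost the recursive definition of $\pi$, so the proof reduces to a short binary-digit computation, at the cost of a slightly fiddly case split. Your approach is global---you read off the entire turn sequence from the continued fraction at once and sum a geometric series---which is more conceptual and immediately explains the block structure the paper only exploits later (in Section~6, where exactly your dictionary is stated and used). The price is that you must establish and normalise the Stern--Brocot dictionary carefully, including the $a_1-1$ off-by-one, which you correctly flag as the main obstacle. Both arguments are of comparable length once written out in full.
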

\begin{proof}
We simply check that $\pi(x)$ satisfies the defining properties of Minkowski's function.

First, we have by definition that
\begin{equation*}
\pi(0) = [0.0000 \dots]_{2} = 0, \quad \pi(1) = [0.1111 \dots]_{2} = 1,
\end{equation*}
and 
\begin{equation*}
\pi \left( \frac{1}{2} \right) = [0.1000 \dots]_{2} = \frac{1}{2} = \frac{0 + 1}{2} = \frac{ \pi (0) + \pi (1) }{2}.
\end{equation*}

Now let's check that $\pi(x)$ satisfies the main property (\ref{deffar}):
$$
\pi\left(\frac{a+c}{b+d}\right)=\frac{1}{2}\left(\pi\left(\frac{a}{b}\right)+\pi\left(\frac{c}{d}\right)\right).
$$

Let $\frac{a}{c}$ and $\frac{b}{d}$ be two Farey neighbours assuming that $\frac{a}{c} < \frac{b}{d}$. At every point on the Farey tree apart from $x = \frac{1}{2}$, either $\frac{a}{c}$ or $\frac{b}{d}$ is `higher up' the Farey tree: the binary expansions will be of different lengths. There are two cases to consider.

Case 1: Assume that $\frac{b}{d}$ is `higher up' the Farey tree than $\frac{a}{c}$. Since $\frac{a}{c}$ and $\frac{b}{d}$ are neighbours, we know that
$$
\pi \left( \frac{a}{c} \right) = [0.b_1 b_2 \dots b_n 1]_2, \quad
\pi \left( \frac{b}{d} \right) = [0.b_1 b_2 \dots b_n \beta_1 \beta_2 \dots \beta_k 1]_2,
$$
where $\beta_1 \beta_2 \dots \beta_k = [10 \dots 0]$. Then from the definition \eqref{eq:binmapdef} of $\pi$ we have
$$
\pi \left( \frac{a+b}{c+d} \right) = [0.b_1 b_2 \dots b_n \beta_1 \beta_2 \dots \beta_k 01]_2 = \frac{b_1}{2} + \frac{b_2}{2^2} + \dots + \frac{b_n}{2^n} + \frac{1}{2^{n+1}} + \frac{1}{2^{n+k+2}}$$
$$
= \frac{1}{2} \left[\frac{b_1}{2} + \frac{b_2}{2^2} + \dots + \frac{b_n}{2^n} + \frac{1}{2^{n+1}} \right] + \frac{1}{2} \left[\frac{b_1}{2} + \frac{b_2}{2^2} + \dots + \frac{b_n}{2^n} + \frac{1}{2^{n+1}} + \frac{1}{2^{n+k+1}} \right]$$
$$
= \frac{1}{2}\left(\pi \left( \frac{a}{c} \right) + \pi \left( \frac{b}{d} \right)\right).
$$

Case 2: Now assume that $\frac{a}{c}$ is `higher', so that
$$
\pi \left( \frac{a}{c} \right) = [0.b_1 b_2 \dots b_n \beta_1 \beta_2 \dots \beta_k 1]_2, \quad
\pi \left( \frac{b}{d} \right) = [0.b_1 b_2 \dots b_n  1]_2,
$$
where $\beta_1 \beta_2 \dots \beta_k = [01 \dots 1]$. Again from \eqref{eq:binmapdef} we have
$$
\pi \left( \frac{a+b}{c+d} \right) = [0.b_1 b_2 \dots b_n \beta_1 \beta_2 \dots \beta_k 11]_2 $$
$$= \frac{b_1}{2} + \frac{b_2}{2^2} + \dots + \frac{b_n}{2^n} + \frac{1}{2^{n+2}} + \frac{1}{2^{n+3}} + \dots+ \frac{1}{2^{n+k}} + \frac{1}{2^{n+k+1}} + \frac{1}{2^{n+k+2}}$$
$$ = \frac{1}{2} \left[\frac{b_1}{2} + \frac{b_2}{2^2} + \dots 
+ \frac{b_n}{2^n} + \frac{1}{2^{n+2}} + \frac{1}{2^{n+3}} + \dots + \frac{1}{2^{n+k}} + \frac{1}{2^{n+k+1}} \right]
 \quad$$
 $$ + \frac{1}{2} \left[\frac{b_1}{2} + \frac{b_2}{2^2} + \dots + \frac{b_n}{2^n} + \frac{1}{2^{n+1}} \right]
 = \frac{1}{2}\left(\pi \left( \frac{a}{c} \right) + \pi \left( \frac{b}{d} \right)\right).$$

So in either case, we have
\begin{equation*}
\pi \left( \frac{a+b}{c+d} \right) = \left(\pi \left( \frac{a}{c} \right) + \pi \left( \frac{b}{d} \right)\right),
\end{equation*}
which means that $\pi$ coincides with Minkowski function on all rational numbers.
Since $\pi(x)$ is monotonic it must coincide with $?(x)$ for all $x \in [0,1].$
\end{proof}

\section{Most irrational paths and Minkowski tree}

Let now $x=\alpha$ be quadratic irrational and assume that $\alpha$ has a pure periodic continued fraction expansion
$
\alpha=[\overline{a}]
$
with even period $a=a_1, \ldots, a_{2n}$ (if the period is odd we will double it to make even).

It follows from Salem's formula (\ref{eq:Salemfrac}) that the value of Minkowski's function
$?(\alpha)$ has a pure periodic binary representation
$$
?(\alpha)=[0.\overline{A}]_2
$$
with period $A$ of length $a_1+\dots +a_{2n}$ consisting of $a_1-1$ 0's followed by $a_2$ 1's, then followed by $a_3$ 0's etc until we have $a_{2n}$ 1's followed by one final $0.$

For convenience we will drop the initial zero and write simply $[\overline{A}]_2$ instead of $[0.\overline{A}]_2.$
%Here are some examples to illustrate this:
%$$
%?([0;\overline{3,2}]) = [0.\overline{00110}]_2, \quad
%?([0;\overline{1,2,1,3,1,2}] = [0.\overline{1101110110}]_2. 
%$$
In particular, we have
$$
?([\overline{1,1}]) = [\overline{10}]_2, \, \,\,
?([\overline{2,2}]) = [\overline{0110}]_2, \,\,\,
?([\overline{2,2,1,1}]) = [\overline{011010}]_2.
$$

Using  Salem's representation we can prove the following conjunction property of Minkowski's function.

\begin{prop}
Let 
$
[\overline{a}]=[\overline{a_1, \ldots, a_{2n}}], \,\, [\overline{b}]=[\overline{b_1, \ldots, b_{2m}}]
$
be two continued fractions of even periods and
$$
?([\overline{a}])=[\overline{A}]_2, \quad ?([\overline{b}])=[\overline{B}]_2.
$$
Then
\begin{equation}
?([\overline{ab}])=[\overline{AB}]_2.
\end{equation}
\end{prop}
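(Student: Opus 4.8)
\emph{Proof proposal.} The plan is to derive the identity directly from Salem's formula \eqref{eq:Salemfrac} for $?$, the crucial point being that both periods have \emph{even} length. Write $L=a_1+\dots+a_{2n}$ and $M=b_1+\dots+b_{2m}$ for the lengths of $A$ and $B$, put $\sigma_k=a_1+\dots+a_k$ and $\tau_k=b_1+\dots+b_k$, and abbreviate the ``one-period contributions'' $P_a=\sum_{k=1}^{2n}(-1)^{k-1}2^{1-\sigma_k}$ and $P_b=\sum_{k=1}^{2m}(-1)^{k-1}2^{1-\tau_k}$. Applying \eqref{eq:Salemfrac} to $[\overline a]=[a_1,\dots,a_{2n},a_1,\dots]$ and collecting the summands into consecutive blocks of $2n$ terms, one sees that the $j$-th block equals $2^{-jL}$ times the first (the denominators grow by $L$ from one block to the next, and the alternating sign restarts at $+$ because $2n$ is even), so $?([\overline a])=P_a\sum_{j\ge0}2^{-jL}=P_a/(1-2^{-L})$; likewise $?([\overline b])=P_b/(1-2^{-M})$.

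Next apply \eqref{eq:Salemfrac} to $[\overline{ab}]=[a_1,\dots,a_{2n},b_1,\dots,b_{2m},a_1,\dots]$ and group the (absolutely convergent) series into ``super-blocks'', each consisting of one copy of $a$ followed by one copy of $b$, that is, of $2n+2m$ consecutive terms. Since $2n$ and $2m$ are both even, within every super-block the sign pattern restarts with $+$ at the first $a$-entry and again at the first $b$-entry, exactly as in the very first super-block, while the partial sums of the denominators run through $\sigma_1,\dots,\sigma_{2n}=L$ and then $L+\tau_1,\dots,L+\tau_{2m}=L+M$. Hence the $j$-th super-block contributes $2^{-j(L+M)}(P_a+2^{-L}P_b)$, and summing the geometric series gives $?([\overline{ab}])=\dfrac{P_a+2^{-L}P_b}{1-2^{-(L+M)}}$. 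This step is precisely where evenness of the periods is indispensable: for an odd period the signs in the second half of each super-block would be reversed and no such clean splitting would hold.

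It remains to match this value with $[\overline{AB}]_2$. If $N_A\in\{0,\dots,2^L-1\}$ is the integer whose binary digits form the string $A$ (of length $L$), then $[\overline A]_2=N_A/(2^L-1)$, and comparing with $?([\overline a])=P_a/(1-2^{-L})$ gives $N_A=2^LP_a$; similarly $N_B=2^MP_b$. Concatenation of bit-strings corresponds to $N_{AB}=N_A2^M+N_B$ (the one place where the trailing $0$ that Salem's description appends to each period must be accounted for, so that $A$ and $B$ really have lengths $L$ and $M$ and concatenate without overlap), whence $[\overline{AB}]_2=\dfrac{N_A2^M+N_B}{2^{L+M}-1}=\dfrac{2^{L+M}P_a+2^MP_b}{2^{L+M}-1}$, which a one-line manipulation identifies with $\dfrac{P_a+2^{-L}P_b}{1-2^{-(L+M)}}$, namely $?([\overline{ab}])$.

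I expect the only genuine difficulty to be organizational, namely setting up the super-block decomposition and the sign bookkeeping cleanly, rather than computational; the final arithmetic is immediate once $N_A=2^LP_a$ is in hand. One could alternatively argue through the identification of $?$ with the path function established earlier, using that the Farey path to $[\overline a]$ is periodic with period the turn-word $A$, but this ultimately reduces to the same computation.
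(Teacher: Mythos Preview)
Your proof is correct and follows essentially the same approach as the paper: both apply Salem's formula \eqref{eq:Salemfrac} to $[\overline{ab}]$ and group the alternating series into super-blocks of length $2n+2m$, using the evenness of the periods to reset the sign pattern. The only difference is cosmetic---the paper identifies the resulting series directly at the level of bit-strings (writing it as $[A]_2+[0_LB]_2+[0_{L+M}A]_2+\cdots=[\overline{AB}]_2$), whereas you carry out the equivalent comparison numerically via the closed forms $P_a/(1-2^{-L})$ and $N_A/(2^L-1)$.
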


\begin{proof}
Observe that 
$$
?([\overline{ab}]) = \frac{1}{2^{a_1 - 1}} - \frac{1}{2^{a_1 + a_2 - 1}} + \dots - \frac{1}{2^{a_1 + \ldots + a_{2n} - 1}} $$
$$
+ \frac{1}{2^{a_1 + \ldots + a_{2n} + b_1 - 1}} - \frac{1}{2^{a_1 + \ldots + a_{2n} + b_1 + b_2 - 1}} + \ldots - \frac{1}{2^{a_1 + \ldots + a_{2n} + b_1 + \ldots + b_{2m} - 1}} $$
$$
+ \frac{1}{2^{a_1 + \ldots + a_{2n} + b_1 + \ldots + b_{2m} + a_1 - 1}} - \frac{1}{2^{a_1 + \ldots + a_{2n} + b_1 + \ldots + b_{2m} + a_1 + a_2 - 1}} + \ldots $$
$$
=[A]_2 +[\underbrace{0 \ldots 0}_{\sum{a_{i}}}B]_2 +[\underbrace{0 \ldots 0}_{\sum{a_i + b_i}}A]_2 + \ldots =[\overline{AB}]_2. 
$$
\end{proof}

Thus Minkowski's $?(x)$ function maps the most irrational numbers to particular binary expansions, specifically those which mirror the continued fraction expansion of the most irrational numbers with ``1,1'' replaced by ``10'' and ``2,2'' replaced by ``0110''. 

Applying Minkowski's function to the Markov-Hurwitz tree we have the {\it Minkowski tree}, encoding the paths to the most irrational numbers (see Fig.8, where $i_k$ means $i$ repeated $k$ times).
 
 \begin{figure}[h]
\label{fig:CFTree}
\begin{center}
 \includegraphics[trim = 0mm 30mm 0mm 22mm, clip, height=38mm]{ContFracTree2}  \hspace{12pt}  \includegraphics[trim = 0mm 30mm 0mm 30mm, clip, height=38mm]{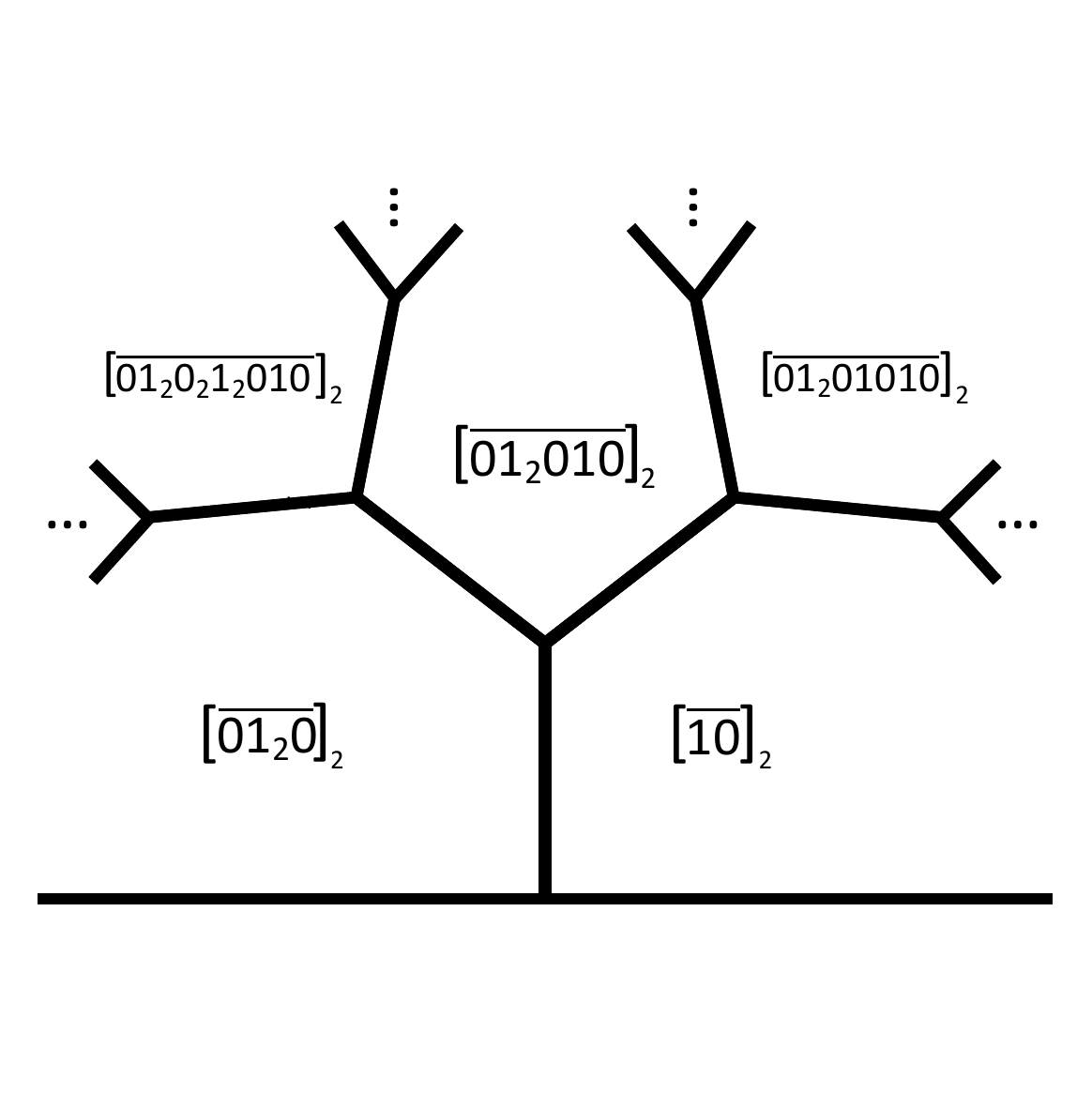}
\caption{\small Markov-Hurwitz and Minkowski trees related by $?$-function}
\end{center}
\end{figure}

%\centering
%\includegraphics[trim = 0mm 30mm 0mm 0mm, clip, width=65.2mm]{BinaryTree2}
%\caption{The ``Minkowski'' Tree $\mathfrak{K}$.}
%\label{fig:BinaryTree}
%\end{figure}
%

\section{Lyapunov exponents of the most irrational paths.}

Let $m(\frac{p}{q})\in \mathcal M$ be the Markov number corresponding to the Farey fraction $\frac{p}{q}\in \frac{1}{2}$ (see Fig.2), and
$x(\frac{p}{q})$ be a representative of the corresponding class of the most irrational numbers. 

It would be convenient for us to choose such representative as the inverse of the corresponding number $y_m$ from Markov-Hurwitz tree: 
$x_m=y_m^{-1} \in [0,1].$ We call these representatives 
{\it Markov-Hurwitz numbers} and denote by $\mathbb{X}$ the set of all these numbers
$$
\mathbb{X}=\{x_m=y_m^{-1}: m \in \mathcal M\}.
$$

%Let $\mathbb{X}\subset [0,1]$ be the set of the inverses of the Markov-Hurwitz numbers.

\begin{Theorem}
The function $\Lambda(x(\frac{p}{q}))$ is convex as function of $\frac{p}{q}\in \mathbb Q.$

The restriction $\Lambda_\mathbb{X}$ of $\Lambda(x)$  on the set of Markov-Hurwitz numbers $\mathbb{X}$ is monotonically increasing from $$\Lambda(x_2)=\frac{1}{2}\ln(1+\sqrt 2) \quad {\text to} \quad \Lambda(x_1)=\ln \left(\frac{1+\sqrt 5}{2}\right).$$ 
\end{Theorem}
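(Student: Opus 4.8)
The plan is to obtain a closed formula for $\Lambda$ on the Markov--Hurwitz numbers, deduce convexity from Fock's theorem, and then derive monotonicity on $\mathbb{X}$ from convexity together with the reflection symmetry of the Markov tree. Fix a Farey fraction $\tfrac pq\in[0,\tfrac12]$, put $m=m(\tfrac pq)$ and let $x_m=x(\tfrac pq)\in\mathbb{X}$; its continued fraction is purely periodic, $x_m=[\overline{a_1,\dots,a_{2n}}]$ with all $a_i\in\{1,2\}$, and the period is built from the two top periods $2_2$ and $1_2$ by iterated conjunction \eqref{eq:HurwitzEqn}. By the Proposition expressing $\Lambda$ of a quadratic irrational through its period matrix, $\Lambda(x_m)=\frac{\ln\rho(B_m)}{s_m}$, where $s_m=a_1+\dots+a_{2n}$ and $B_m=R^{a_1}L^{a_2}\cdots R^{a_{2n-1}}L^{a_{2n}}\in SL_2(\mathbb{N})$. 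Since conjunction of periods is concatenation of the corresponding words in $L$ and $R$, the matrices $B_m$ multiply along the Markov--Hurwitz tree exactly as in Cohn's rule $C=AB$; comparing the two bottom vertices ($B_1=RL$, $B_2=R^2L^2$, of traces $3$ and $6$) and applying the Markov/Fricke trace identity inductively gives $\operatorname{tr} B_m=3m$, whence $\ln\rho(B_m)=\arcosh\tfrac{3m}{2}$. Moreover $s_m$ and $q$ are both additive under the tree operation and agree on the two bottom vertices ($s=2,\ q=1$ for $m=1$; $s=4,\ q=2$ for $m=2$), so $s_m=2q$, and therefore
\begin{equation}
\label{keyformula}
\Lambda\!\left(x\!\left(\tfrac pq\right)\right)=\frac{\arcosh\!\bigl(\tfrac32\,m(\tfrac pq)\bigr)}{2q}=\frac{\ell_{p/q}}{4q},
\end{equation}
where $\ell_{p/q}=2\arcosh\!\bigl(\tfrac32 m(\tfrac pq)\bigr)$ is the length of the corresponding simple closed geodesic on the modular once-punctured torus (Gorshkov--Cohn), as $\cosh(\ell_{p/q}/2)=\tfrac32 m$.

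For the convexity, recall that by Thurston's theory the hyperbolic length extends from simple closed curves to a convex, degree-one homogeneous function on the space $\mathcal{ML}\cong\mathbb{R}^2$ of measured laminations on the punctured torus; restricting it to the affine line $\{q=1\}$ gives a convex function $g$ on the real line with $g(\tfrac pq)=\ell_{p/q}/q$ for coprime $(p,q)$. This is precisely the content of Fock's theorem \cite{Fock}. By \eqref{keyformula} we have $\Lambda(x(\tfrac pq))=\tfrac14 g(\tfrac pq)$, which is therefore convex as a function of $\tfrac pq$.

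For the monotonicity, note first that the correspondence $\tfrac pq\mapsto x(\tfrac pq)$ between $\mathbb{Q}\cap[0,\tfrac12]$ and $\mathbb{X}$ respects the in-order structure of the two trees but reverses the order on $\mathbb{R}$, since at the bottom it sends $0\mapsto x_1=\frac{\sqrt5-1}{2}$ and $\tfrac12\mapsto x_2=\sqrt2-1<x_1$. Hence $\Lambda_{\mathbb{X}}$ is increasing iff $g$ is decreasing on $[0,\tfrac12]$. Now $g$ is convex on $[0,1]$, and the reflection $\tfrac pq\mapsto 1-\tfrac pq$ is a symmetry of the Markov tree --- equivalently, it is induced by an isometry of the modular torus fixing the geodesic of slope $\tfrac12$ and exchanging those of slopes $0$ and $1$ --- so $m(\tfrac pq)=m(1-\tfrac pq)$ and $g(x)=g(1-x)$. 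A convex function on $[0,1]$ symmetric about $\tfrac12$ satisfies $g'(\tfrac12^-)=-g'(\tfrac12^+)$, and since $g'$ is nondecreasing this forces $g'(\tfrac12^-)\le0$, hence $g'\le0$ on $[0,\tfrac12]$ and $g$ is decreasing there; strictness of the decrease, and hence strict monotonicity of $\Lambda_{\mathbb{X}}$, follows from the strict convexity of the lamination length function. The endpoint values are read off from \eqref{keyformula}: $\Lambda(x_1)=\tfrac12\arcosh\tfrac32=\tfrac12\ln\varphi^2=\ln\varphi$ and $\Lambda(x_2)=\tfrac14\arcosh3=\tfrac14\ln(3+2\sqrt2)=\tfrac12\ln(1+\sqrt2)$.

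The main obstacle is the convexity step: one must identify exactly which function Fock proves to be convex and check, through the dictionary ``Markov number $\leftrightarrow$ geodesic length $\leftrightarrow$ lamination length'' and the normalisation $s_m=2q$, that it coincides with $\tfrac14\Lambda(x(\tfrac pq))$. Once \eqref{keyformula} is in place the convexity is a citation and the monotonicity on $\mathbb{X}$ is a soft consequence of convexity together with the reflection symmetry; the remaining points requiring care are the order-reversal between the two parametrisations and, for strict monotonicity, the strict convexity of the length function.
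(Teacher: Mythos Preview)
Your proposal is correct and follows essentially the same route as the paper: derive the closed formula $\Lambda(x(\tfrac pq))=\tfrac{1}{2q}\arcosh(\tfrac32 m)$, recognise this as half of Fock's function $\psi$, cite Fock's theorem for convexity, and then obtain monotonicity from convexity together with the reflection symmetry $\psi(\xi)=\psi(1-\xi)$ and the order-reversing nature of $\tfrac pq\mapsto x(\tfrac pq)$. The only variation is in establishing $\operatorname{tr}B_m=3m$: the paper uses the Minkowski-tree path interpretation to identify the period matrix with the Cohn matrix explicitly, whereas you argue more directly via multiplicativity of the period matrices under conjunction and the Fricke identity---a legitimate shortcut that lands on the same key formula. (One small slip: in your final paragraph ``coincides with $\tfrac14\Lambda$'' should read ``$\Lambda$ coincides with $\tfrac14 g$''.)
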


\begin{proof}
Following Fock \cite{Fock} consider the following function $\psi(\xi), \, \xi \in [0,\frac{1}{2}].$
First, define it for rational $\xi = \frac{p}{q} \in [0,\frac{1}{2}]\cap \mathbb Q$ as follows 
\begin{equation}\label{fock}
\psi \left(\frac{p}{q}\right) = \frac{1}{q} \arcosh \left(\frac{3}{2} m \left(\frac{p}{q}\right)\right).
\end{equation}
%where $m\left(\frac{p}{q} \right)$ is the Markov number corresponding to $\frac{p}{q}$ on Farey tree (see Fig.8).
%
%\begin{figure}[h]
%\label{fig:CFTree}
%\begin{center}
%\includegraphics[trim = 0mm 30mm 0mm 22mm, clip, height=38mm]{MarkovTree2}  \hspace{8pt} \quad  \includegraphics[trim = 0mm 30mm 0mm 30mm, clip, height=38mm]{FareyTree21}
%\caption{\small Markov tree and corresponding branch of Farey tree}
%\end{center}
%\end{figure}

Fock proved the following, crucial for us, result (see item 6 in Section 7.3 of \cite{Fock}).

\begin{Theorem} [V. Fock \cite{Fock}]
The function $\psi$ can be extended to a continuous convex function of all $\xi \in \mathbb R$ with the property
\begin{equation}
\label{fock2}
\psi(1-\xi)=\psi(\xi).
\end{equation}
\end{Theorem}

For readers' convenience we present here a version of Fock's proof following \cite{SorVes}. 

\begin{proof}
We use the following remarkable relation of Markov numbers to the lengths of the simple closed geodesic on a punctured torus (see \cite{Cohn, Gorshkov, Series}). 

Consider the one punctured equianharmonic torus $T_*$ with hyperbolic metric. The corresponding Fuchsian group is generated by Cohn matrices (\ref{initial}) and coincides with the commutator subgroup of $SL_2(\mathbb Z)$ (see e.g. \cite{Aigner}).  Then Markov numbers can be interpreted as
$$
m(\frac{p}{q})=\frac{1}{3} tr \, A_m=\frac{2}{3} \cosh L(p,q),
$$
where $L(p,q)$ is the length of the simple closed geodesic (known to be unique) in primitive homology class $(p,q) \in H_1(T, \mathbb Z),$ and $A_m$ is the  matrix from Cohn tree corresponding to $m=m(\frac{p}{q})$.

The length function $L(p,q)$ obviously satisfies the inequality
$$
L(p_1,q_1)+L(p_2,q_2) \ge L(p_1+p_2, q_1+q_2).
$$
This allows to us extend this function by homogeneity and continuity to the norm on the real homology $L(x,y), \, (x,y) \in H_1(T_*, \mathbb R)\cong \mathbb R^2$, known as the {\it stable norm} \cite{GLP}.
Its restriction to the real line $x=\xi, y=1$ coincides with Fock's function $\psi$ at the rational points $\xi=p/q.$ Indeed, $L(p/q,1)=\frac{1}{q}L(p,q)$ by homogeneity. Now Fock's claim follows from the general fact that any norm restricted to a line is a continuous convex function. 

The property (\ref{fock2}) follows from the symmetry $L(p,q)=L(q-p,q).$
\end{proof}

{\it Remark.} Combining this with Theorem 2.1 from McShane and Rivin \cite{McShane} we can deduce that Fock's function is differentiable at every irrational and non-differential at every rational point (see \cite{SorVes}).

We claim now that our function
$$\Lambda(x(\frac{p}{q}))=\frac{1}{2}\psi(\frac{p}{q})$$
is simply half of Fock's function.

Indeed, let $x_m$ be a Markov-Hurwitz number and
$?(x_m)=[\overline{a}]_2$, with $a=\epsilon_1, \dots, \epsilon_{2q}$, be its image under Minkowski's function. It is easy to see that the length of the period $2q$ is exactly twice the denominator of the Farey fraction $\frac{p}{q}$ corresponding to $m$ (see Fig.8).

Now we should use the path defined by $a$ to climb up the Farey tree.
The second key observation is that we will come to the matrix $A_m \in SL_2(\mathbb N)$, which is nothing other than the Cohn matrix corresponding to $m.$

Indeed, for $x_1=\frac{\sqrt 5-1}{2}=[\overline{11}]$ we have $?(x_1)=[\overline{10}]_2$ and the corresponding matrix
$$A_1=\begin{pmatrix}
  1 & 0 \\
 1 & 1 \\
\end{pmatrix} \begin{pmatrix}
  1 & 1 \\
  0 & 1 \\
\end{pmatrix}=\begin{pmatrix}
  1 & 1 \\
  1 & 2 \\
\end{pmatrix}.
$$
Similarly, for $x_2=\sqrt 2 -1=[\overline{22}]$ we have $?(x_2)=[\overline{0110}]_2$ and
$$A_2=\begin{pmatrix}
  1 & 1 \\
  0 & 1 \\
\end{pmatrix}\begin{pmatrix}
  1 & 0 \\
 1 & 1 \\
\end{pmatrix} \begin{pmatrix}
  1 & 0 \\
 1 & 1 \\
\end{pmatrix} \begin{pmatrix}
  1 & 1 \\
  0 & 1 \\
\end{pmatrix}=\begin{pmatrix}
  3 & 4 \\
  2 & 3 \\
\end{pmatrix}.
$$
The general case follows from the conjunction rule for the Minkowski tree and the product rule for the Cohn tree.

This means that the Lyapunov exponent 
$\Lambda(x_m)=\frac{\ln \lambda(m)}{2q},$
where $\lambda(m)$ is the largest eigenvalue of $A_m$.

But we know that the Cohn matrix $A_m$ has the trace $3m$ and thus the characteristic equation
$$\lambda^2-3m\lambda+1=0.$$

Thus the Lyapunov exponent is
\begin{equation}
\Lambda(x_m)=\frac{1}{2q}\ln{\left(\frac{3m+\sqrt{9m^2-4}}{2}\right)} = \frac{1}{2q}\arcosh \left( \frac{3m}{2} \right),
\end{equation}
which is exactly half of the Fock function. This proves the convexity of $\Lambda(x(\frac{p}{q}))$. 

To prove the monotonicity we note first that the function $x(\frac{p}{q})$ is monotonically decreasing, which follows from the conjunction construction of Markov-Hurwitz tree.
 Since Fock's function $\psi$ is convex and satisfies (\ref{fock2}) it has the minimum at $\xi=\frac{1}{2}.$ This means that $\Lambda(x(\frac{p}{q}))$ is monotonically decreasing when $\frac{p}{q} \in [0,\frac{1}{2}]$, and thus $\Lambda(x)$ is strictly increasing on $\mathbb X.$
\end{proof}

%On the other hand the numerical analysis of the stable norm of $T$ in \cite{McShane} implies that the function $\Lambda(x(\frac{p}{q}))$ is also monotonically decreasing for $\frac{p}{q} \in [0, \frac{1}{2}].$ This implies that the restriction of $\Lambda(x)$ to the set $\mathbb{X}$ increases from $\Lambda(x(\frac{1}{2}))=\frac{1}{2}\ln(1+\sqrt 2)$ to $\Lambda(x(0))=\ln \left(\frac{1+\sqrt 5}{2}\right).$ 
%\end{proof}

\section{Generalised Markov-Hurwitz sets}

Part of theorem 8 can be generalised to the following sets.

Let $a\in \mathbb Z_{>0}$ be an integer parameter and
consider a version of the Hurwitz tree starting with the continued fractions $[\overline{2a_2}]=[\overline{2a,2a}]$, $[\overline{a_2}]=[\overline{a,a}]$ 
 and the corresponding version of the Minkowski tree growing from $[\overline {0_{2a-1}1_{2a} 0}]_2=[\overline{\underbrace{0 \ldots 0}_{2a-1}\underbrace{1 \ldots 1}_{2a}0}]_2$ and $[ \overline {0_{a-1}1_{a} 0}]_2=[\overline{\underbrace{0 \ldots 0}_{a-1}\underbrace{1 \ldots 1}_{a}0}]_2$, where we continue to drop the initial zero as before (see Fig.9).

\begin{figure}[h]
\label{fig:CFTree-a}
\centering{
 \includegraphics[trim = 0mm 30mm 0mm 30mm, clip, height=42mm]{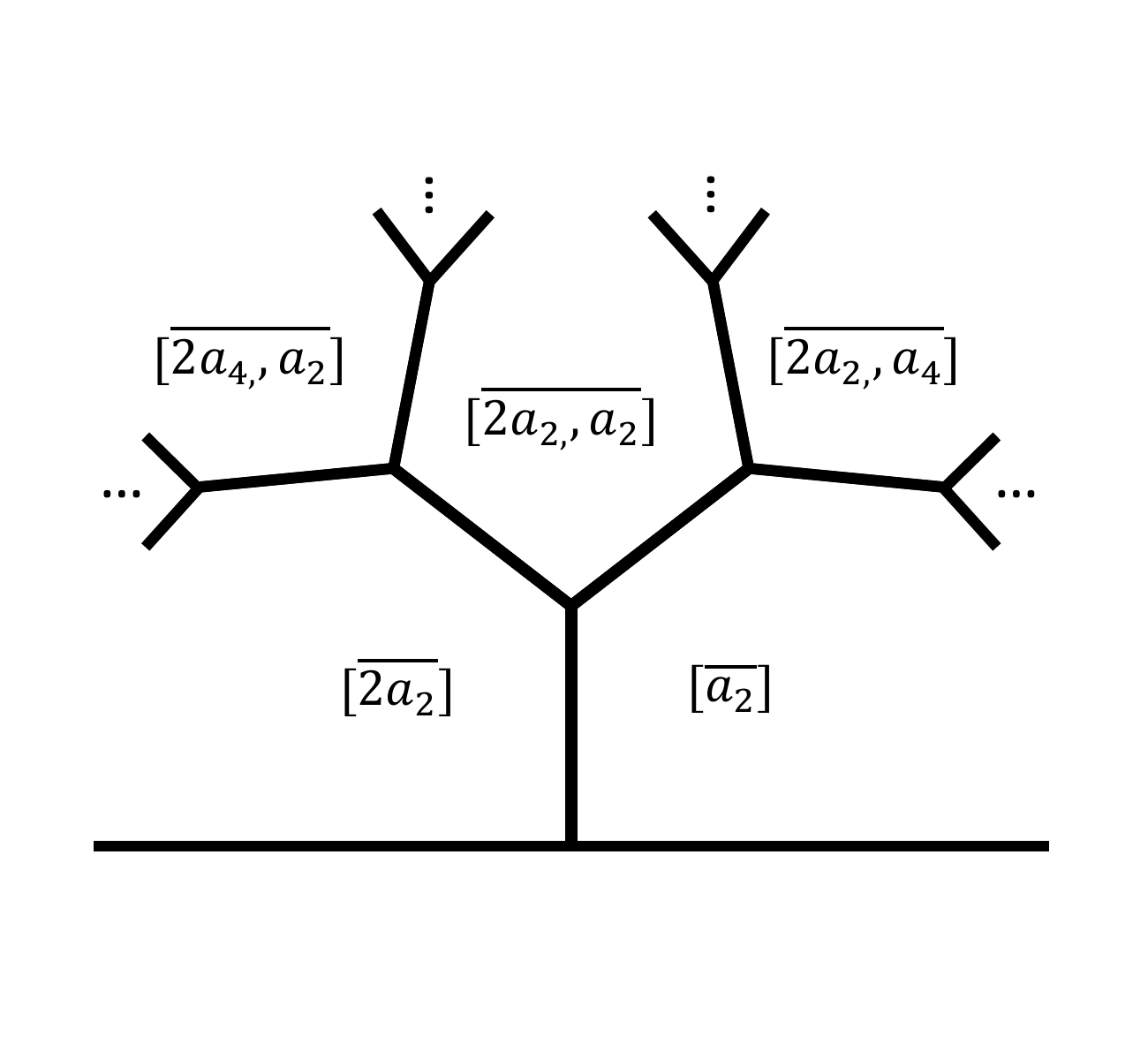}  
 \includegraphics[trim = 0mm 30mm 0mm 30mm, clip, height=42mm]{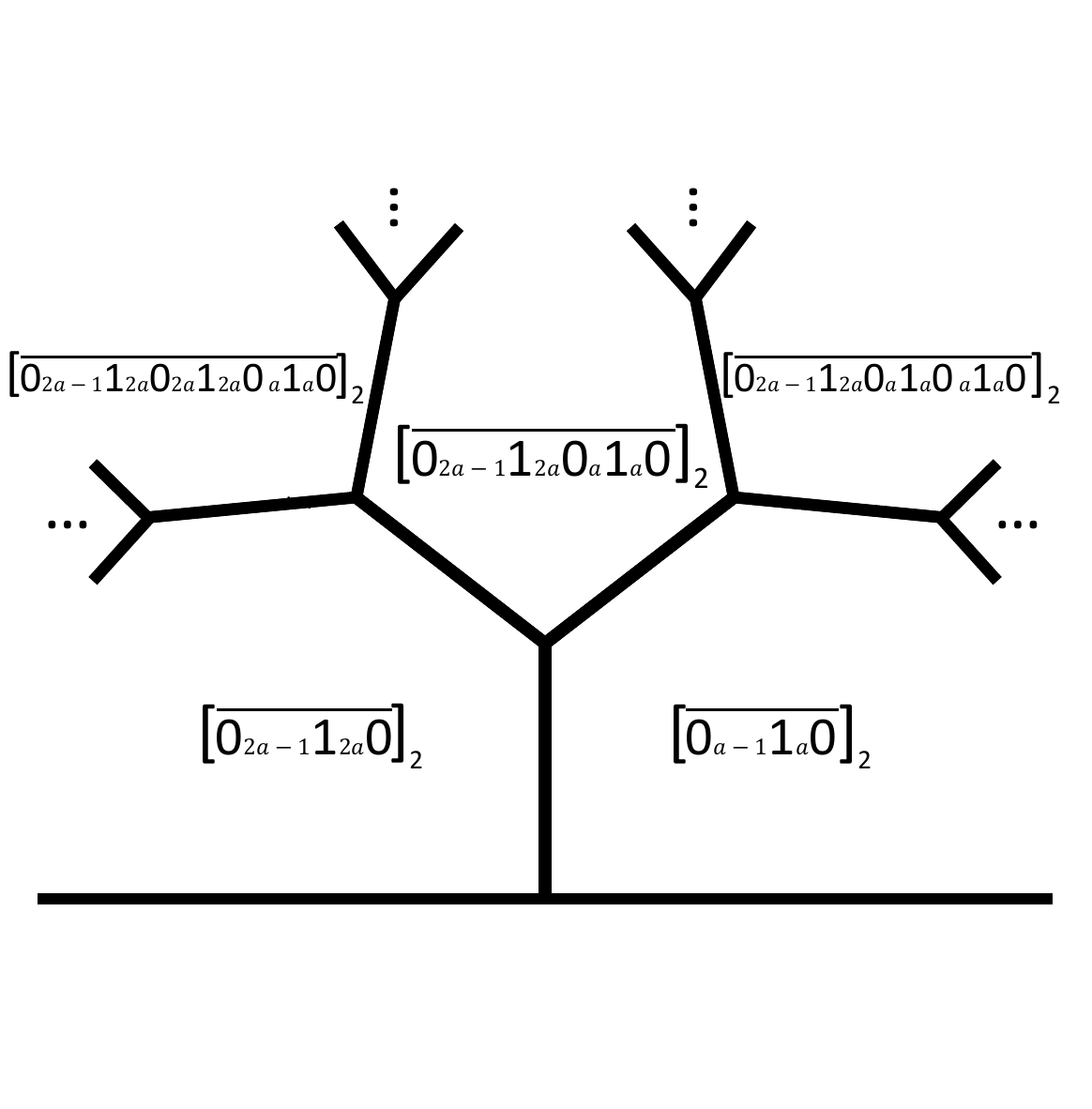}
\caption{\small Generalised Markov-Hurwitz and Minkowski trees}}
\end{figure}

%\begin{figure}[h]
%\begin{center}
%\hspace{10mm} \includegraphics[width=48mm]{MH-a-tree}  \hspace{8pt}  \includegraphics[trim = 0mm 25mm 0mm 30mm, clip, width=48mm]{Minkowski-a-tree}
%\caption{\small Generalised Markov-Hurwitz and Minkowski trees.}
%\end{center}
%\end{figure}
%

Let us denote by $\mathbb{X}_{a}$ the set of the inverses of the corresponding quadratic irrationals from this version of the Hurwitz tree.
When $a=1$ we have the set $\mathbb{X}_1=\mathbb{X}$ considered before.

The corresponding version of Cohn tree starts with the generalisation of Cohn matrices (\ref{initial})
\begin{equation}
\label{cohna}
M_a = \begin{pmatrix}
  1-a+a^2 & a^2 \\
  a & a+1 \\
\end{pmatrix}, \quad M_{2a} = 
\begin{pmatrix}
  1-2a+4a^2 & 4a^2 \\
  2a & 2a+1 \\
\end{pmatrix}.
\end{equation}

Indeed, it is easy to check that
$$
\begin{pmatrix}
  1 & 1 \\
  0 & 1 \\
\end{pmatrix}^{2a-1}\begin{pmatrix}
  1 & 0 \\
  1 & 1 \\
\end{pmatrix}^{2a}\begin{pmatrix}
  1 & 1 \\
  0 & 1 \\
\end{pmatrix}=\begin{pmatrix}
  1 & 2a-1 \\
  0 & 1 \\
\end{pmatrix}\begin{pmatrix}
  1 & 0 \\
  2a & 1 \\
\end{pmatrix}\begin{pmatrix}
  1 & 1 \\
  0 & 1 \\
\end{pmatrix}=M_a.
$$

The trace map $A \rightarrow tr \, A$ produces the $a$-generalisation of Markov tree shown in Fig.10.
\begin{figure}[h]
\centering
\includegraphics[trim = 0mm 30mm 0mm 30mm, clip, height=38mm]{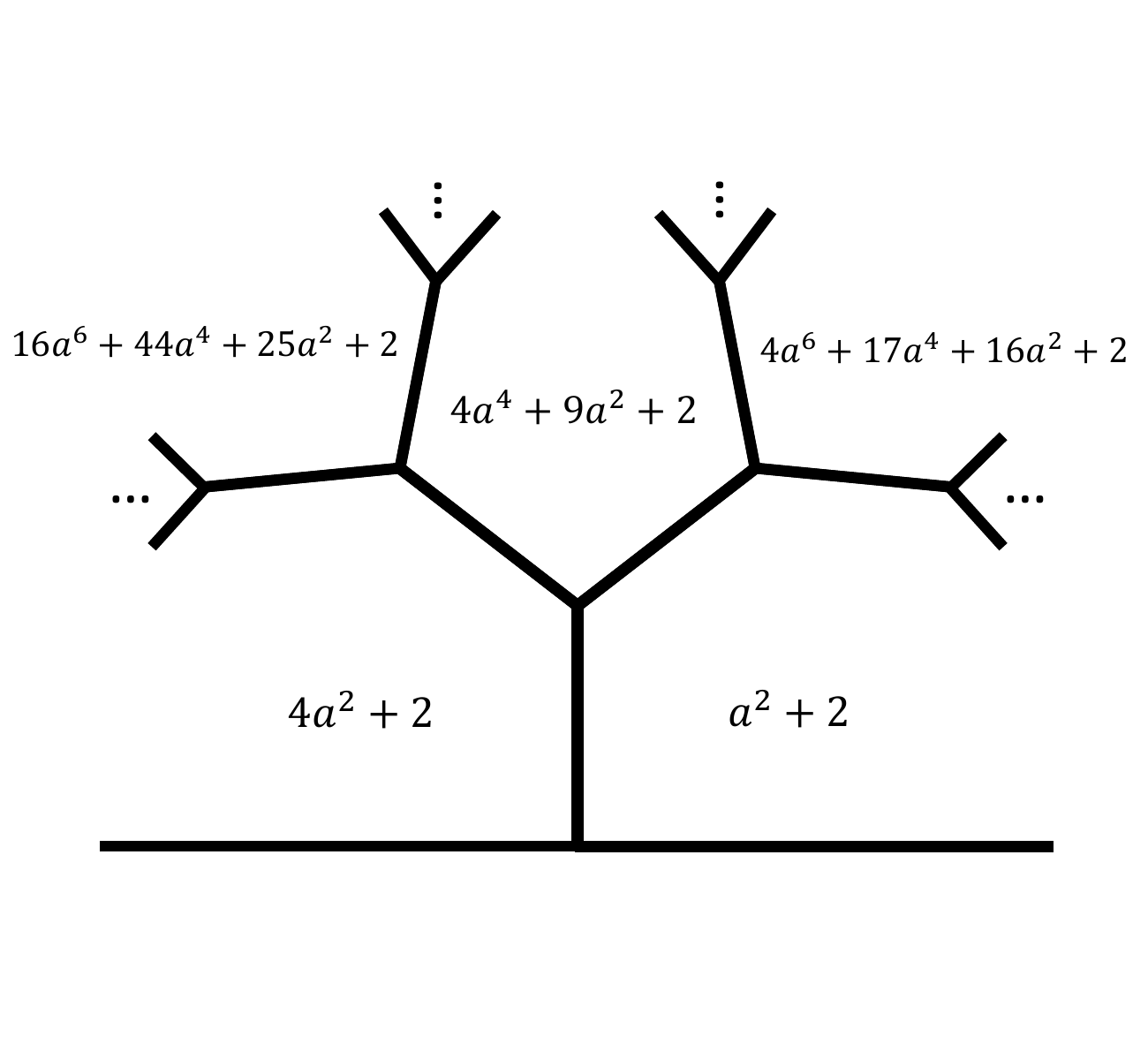} \hspace{8pt} \quad  \includegraphics[trim = 0mm 30mm 0mm 30mm, clip, height=38mm]{FareyTree21}
\label{fig:M-a-tree}
\caption{The $a$-generalisation of Markov tree and corresponding Farey fractions.}
\end{figure}

The corresponding triples are the integer solutions of the following version of Markov equation studied by Mordell  \cite{Mordell}
\begin{equation}
\label{xyza}
X^2 + Y^2 + Z^2=XYZ+4-4a^6.
\end{equation}
Note that when $a=1$ we have the equation 
\begin{equation}
\label{xyz}	
X^2 + Y^2 + Z^2=XYZ,
\end{equation}
which is a simply rescaled version of Markov equation (\ref{equa}) and has integer solutions being Markov triples multiplied by 3: $$X=3x, \,Y=3y,\, Z=3z.$$
The modified equation (\ref{xyza}) has no fully symmetric solutions, but has a solution with $X=Y$:
$$X=Y=a^2+2, \, Z=4a^2+2.$$
Applying to this solution Vieta involution 
$
(X,Y,Z) \rightarrow (X,Y, XY-Z)
$
and permutations we have the generalised Markov tree above.

%Note that (\ref{xyz}) is the version in which Markov equation appears in the theory of Teichm\"uller space of punctured tori, see \cite{Cohn}.
%The equation (\ref{xyza}) has similar interpretation in the theory of Teichm\"uller space of tori with a hole (see e.g. \cite{Fock}).
%For $a=1$ we have Markov equation (\ref{xyz}) with a hole reduced to a puncture.

Let $A(a,\frac{p}{q})$ be the matrix from the $a$-Cohn tree corresponding to the fraction $\frac{p}{q}$ from the Farey tree, $m(a,\frac{p}{q})=tr\, A(a,\frac{p}{q})$ be the corresponding $a$-Markov number:
$$
m(a,\frac{0}{1})=a^2+2, \, m(a,\frac{1}{2})=4a^2+2,\, m(a,\frac{1}{3})=4a^4+9a^2+2,\dots,
$$
$y(a,\frac{p}{q})$ be the corresponding quadratic irrational from the $a$-version of Markov-Hurwitz tree, $x(a,\frac{p}{q})=y(a,\frac{p}{q})^{-1}$.
Note that, as in the previous case (see Remark at the end of Section 4), we have
$$y(a,\frac{p}{q})=\mu(a,\frac{p}{q})+1,$$
where 
$v=(\mu(a,\frac{p}{q}),1)^T$ is the eigenvector with the largest eigenvalue of the matrix $A(a,\frac{p}{q}).$ 

The key observation is that on our set $\mathbb{X}_a$ the values of the Lyapunov function have the form
\begin{equation}
\label{gena}
\Lambda(x(a,\frac{p}{q}))= \frac{\ln \lambda(a,\frac{p}{q})}{2aq},
\end{equation}
where 
$$\lambda(a,\frac{p}{q})=\frac{m+\sqrt{m^2-4}}{2}, \quad m=m(a,\frac{p}{q})$$ is the largest eigenvalue of the Cohn matrix $A(a,\frac{p}{q}).$
The proof is a straightforward generalisation of the arguments from the previous section. 

Geometrically the equation (\ref{xyza}) describes the lengths of the simple closed geodesics on the equianharmonic hyperbolic torus with a hole (see e.g. \cite{Cohn, Fock}) of length 
$$l=2\arcosh (2a^6-1).$$

This follows from the {\it Fricke identities} \cite{Fricke}: for any $A,B \in SL_2(\mathbb R), C=AB$ we have
$$
tr\, AB + tr\, AB^{-1}=tr\, A\, tr\, B,
$$
\begin{equation}
\label{fricke}
(tr\, A)^2+(tr\, B)^2+(tr\, C)^2=tr\, A \, tr\, B\, tr\,C +tr\, (ABA^{-1}B^{-1})+2.
\end{equation}
This means that $X=tr\, A,\, Y=tr\, B,\, Z=tr\, C$ satisfy (\ref{xyza}) with $$tr\, ABA^{-1}B^{-1}=2-4a^6.$$

The matrices $A=M_a$ and $B=M_{2a}$ generate the Fuchsian subgroup $G_a$ of  $SL_2(\mathbb R)$, which is free. The corresponding quotient of the upper half-plane is a hyperbolic torus with a hole. The length of the hole satisfies $$2 \cosh \frac{l}{2}=|tr\,ABA^{-1}B^{-1}|= 4a^6-2$$ giving $l=2\arcosh (2a^6-1).$
When $a=1$ we have the punctured torus with $l=0$ and the scaled version of the Markov equation.

Repeating the proof of Fock's theorem for the stable norm of one-hole torus we have the following result.

\begin{Theorem}
The function $\Lambda(x(a,\frac{p}{q}))$ is convex as function of $\frac{p}{q}$ for all $a \in  \mathbb N$. The restriction of $\Lambda$ to the set $\mathbb{X}_{a}$ is monotonically increasing.
\end{Theorem}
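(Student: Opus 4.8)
The plan is to repeat the argument of Section 7 almost verbatim, replacing the punctured equianharmonic torus by the hyperbolic torus with a hole $T_a = \mathbb{H}/G_a$, where $G_a$ is the free Fuchsian group generated by $M_a, M_{2a}$ from \eqref{cohna} and the boundary has length $\ell = \arcosh(2a^6-1)$. First I would rewrite the key observation \eqref{gena}: since $\ln\lambda(a,\frac pq) = \arcosh\!\bigl(\tfrac12 m(a,\tfrac pq)\bigr)$, the Lyapunov function along the $\mathbb{X}_a$-paths is
\[
\Lambda\!\left(x(a,\tfrac pq)\right) = \frac{1}{2aq}\,\arcosh\!\left(\tfrac12 m\!\left(a,\tfrac pq\right)\right) = \frac{1}{2a}\,\psi_a\!\left(\tfrac pq\right),
\]
where $\psi_a(\frac pq) := \frac1q\arcosh\!\bigl(\tfrac12 m(a,\tfrac pq)\bigr)$ is the natural $a$-analogue of Fock's function \eqref{fock}. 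So the theorem reduces to showing that $\psi_a$ extends to a continuous convex function on $\mathbb{R}$ with $\psi_a(1-\xi)=\psi_a(\xi)$, together with the monotonicity of $\frac pq\mapsto x(a,\frac pq)$.

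For the convexity I would invoke the geometry behind the Fricke identities \eqref{fricke}: each matrix $A(a,\frac pq)$ of the $a$-Cohn tree is a primitive hyperbolic element of $G_a$ representing the primitive class $(p,q)\in H_1(T_a,\mathbb Z)\cong\mathbb Z^2$, and by the trace--length relation (as in Section 7) $m(a,\frac pq)=tr\,A(a,\frac pq)=2\cosh L_a(p,q)$, where $L_a(p,q)$ is the length of the unique closed geodesic in that class; hence $\psi_a(\frac pq)=\frac1q L_a(p,q)=L_a(\frac pq,1)$ by homogeneity. The superadditivity $L_a(p_1,q_1)+L_a(p_2,q_2)\le L_a(p_1+p_2,q_1+q_2)$ holds on the one-hole torus exactly as in the punctured case, so $L_a$ extends by homogeneity and continuity to the stable norm on $H_1(T_a,\mathbb R)\cong\mathbb R^2$, and its restriction to the line $y=1$ is the continuous convex function $\psi_a$. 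This yields convexity of $\Lambda(x(a,\frac pq))$ in $\frac pq$. The symmetry $\psi_a(1-\xi)=\psi_a(\xi)$ comes from $L_a(p,q)=L_a(q-p,q)$, which reflects the symmetry of the $a$-Markov tree built from $X=Y=a^2+2$; hence $\psi_a$ is minimal at $\xi=\tfrac12$ and monotonically decreasing on $[0,\tfrac12]$.

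For the monotonicity of $\Lambda$ on $\mathbb X_a$ I would argue as in Section 7: the map $\frac pq\mapsto x(a,\frac pq)$ is monotonically decreasing, since in the conjunction construction of the $a$-Hurwitz tree longer periods (obtained by prepending the blocks $\overline{a,a}$, $\overline{2a,2a}$) correspond to smaller quadratic irrationals. Composing this decreasing map with $\psi_a$, decreasing on $[0,\tfrac12]$, makes $\Lambda$ monotonically increasing on $\mathbb X_a$, running from the value at the vertex $\frac pq=\tfrac12$ up towards the value at $\frac pq=0$.

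I expect the main obstacle to be the geometric input for the one-hole surface: one has to be sure that every primitive integral homology class of $T_a$ contains a unique closed geodesic whose trace realises $m(a,\frac pq)$, and that the superadditivity $L_a(p_1,q_1)+L_a(p_2,q_2)\le L_a(p_1+p_2,q_1+q_2)$ really holds. This is where discreteness and freeness of $G_a$ — guaranteed by the Fricke computation $tr\,(M_aM_{2a}M_a^{-1}M_{2a}^{-1})=2-4a^6<-2$ — and the positivity of the hole length $\ell=\arcosh(2a^6-1)$ are used. Once the stable-norm description of $\psi_a$ is in place, convexity and monotonicity follow formally, exactly as for $a=1$.
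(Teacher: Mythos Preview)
Your proposal is correct and follows essentially the same route as the paper: the paper's proof consists of the single sentence ``Repeating the proof of Fock's theorem for the stable norm of one-hole torus we have the following result,'' and what you have written is precisely that repetition spelled out in detail --- define the $a$-analogue $\psi_a$ of Fock's function, identify it via the trace--length relation with the restriction of the stable norm on $H_1(T_a,\mathbb R)$ to the line $y=1$, deduce convexity and the symmetry $\psi_a(1-\xi)=\psi_a(\xi)$, and combine the resulting monotone decrease on $[0,\tfrac12]$ with the monotone decrease of $\tfrac pq\mapsto x(a,\tfrac pq)$. One small inaccuracy: the commutator trace $2-4a^6$ equals $-2$ (not $<-2$) when $a=1$, so the boundary is a cusp rather than a hole of positive length in that case; this does not affect the argument, and is in any event the case already treated earlier in the paper.
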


%One can see this in the formal asymptotic limit $a \to \infty.$
%Note that for large values of $a$ the $a$-Markov numbers asymptotically are
%$$
%m(a,\frac{p}{q})\sim 2^{2p}a^{2q-2p},
%$$
%which leads to the following asymptotic formula for Lyapunov function for large $a$
%$$
%\Lambda(x(a,\frac{p}{q})) \sim \frac{\ln m(a,\frac{p}{q})}{2aq} \sim \frac{\ln a}{a} - \frac{\ln a/2}{a}\frac{p}{q}.
%$$
%It is linearly decreasing in $\frac{p}{q}$ and hence $\Lambda(x)$ is increasing on $\mathbb{X}_{a}.$
%

\section{Concluding remarks}

Our results can be applied to study the topological entropy of the modular group dynamics on the corresponding affine cubic surfaces
\beq{cantat}
x^2+y^2+z^2=3xyz+D, \quad x,y,z \in \mathbb C.
\eeq
Indeed, Cantat and Loray \cite{Cantat} showed that the topological entropy of the dynamics generated by the action of $A \in SL_2(\mathbb Z)$ is equal to the logarithm of the spectral radius of $A$  (see also Iwasaki and Uehara \cite{Iwasaki}).
Thus our function $\Lambda(x)$ can be interpreted as the average topological entropy along the path $\gamma_x$ on binary tree.

%Our main difference is that we are interested mainly in the dependence of the corresponding quantities on the path determining the dynamics. The choice of the quantities is crucial. For example, our Lyapunov exponent is different from the topological entropy from \cite{Cantat} by dividing by the length of the period, which seems to be natural from the tree point of view and makes a huge difference for the properties of the corresponding function.

One can view our work as part of the theory of $SL_2(\mathbb Z)$ dynamics, or more generally, of braid group $B_3$ actions \cite{Veselov}.
The examples of such dynamical systems naturally come from the theory of Yang-Baxter maps \cite{V2007}.

A more interesting example, due to Dubrovin \cite{D1}, comes from the theory of Painlev\'e-VI equation, where the algebraic solutions correspond to the finite orbits of the braid group $B_3$, which are classified in \cite{LT}. It is remarkable that the Markov orbit corresponds to a very special, non-algebraic, solution of Painlev\'e-VI, which is related to the enumerative geometry and quantum cohomology of $\mathbb CP^2$ (Kontsevich and Manin, Dubrovin). Another remarkable appearance of the Markov numbers in algebraic geometry is related to the notion of the exceptional vector bundles, see \cite{Rud}.

The most intriguing question about the Lyapunov function $\Lambda$ is whether it is already known in some parts of mathematics.
The invariance under modular group suggests that $\Lambda(x)$ might be interpreted as the limit values of some modular function on the real boundary of the hyperbolic plane (see e.g. \cite{Reyna} for Riemann's approach to this problem).

\section{Acknowledgements}

We are very grateful to Martin Aigner, Wael Bahsoun, Alexey Bolsinov, Leonid Chekhov, Nikolai P. Dolbilin, Michael Magee, Dmitri Orlov, Alfonso Sorrentino, Boris Springborn and Corinna Ulcigrai for very helpful discussions, to Andy Hone for telling us about Minkowski function, to Vladimir Fock for explaining the proof of his results from \cite{Fock} and to Peter Sarnak for encouragement. 

Special thanks go to the referee for very constructive criticism, which helped us to substantially improve the paper.

The work of K.S. was supported by the EPSRC as part of PhD study at Loughborough.

\end{document}